\author{}
\title{The Extended Persistent Homology Transform of manifolds with boundary}
\tikzset{
  treenode/.style = {shape=rectangle, rounded corners,
                     draw, align=center,},
%                     top color=white, bottom color=blue!20},
  root/.style     = {treenode, font=\small}, 
  %bottom color=red!30},
  env/.style      = {treenode, font=\ttfamily\normalsize},
  dummy/.style    = {circle,draw}
}
\tikzstyle{level 1}=[level distance=2.8cm, sibling distance=10cm]
\tikzstyle{level 2}=[level distance=2.8cm, sibling distance=5cm]
\author{Katharine Turner$^1$, Vanessa Robins$^1$ and James Morgan$^2$}
\date{\small
$^1$Australian National University\\
$^2$University of Sydney
}
\theoremstyle{plain}
\newtheorem*{theorem*}{Theorem}
\newtheorem{theorem}{Theorem}[section]
\newtheorem{lemma}[theorem]{Lemma}
\newtheorem{cor}[theorem]{Corollary}
\newtheorem{corollary}[theorem]{Corollary}
\theoremstyle{definition}
\newtheorem{definition}[theorem]{Definition}
\newtheorem{example}[theorem]{Example}
\newtheorem{remark}[theorem]{Remark}
\newtheorem {proposition}[theorem] {Proposition}
\newcommand{\R}{\mathbb{R}}
\newcommand{\Crit}{\operatorname{Crit}}
\newcommand{\CS}{\text{CS}}
\newcommand{\PHT}{\operatorname{PHT}}
\newcommand{\ECT}{\operatorname{ECT}}
\newcommand{\sgn}{\operatorname{sgn}}
\newcommand{\Int}{\operatorname{int}}
\newcommand{\cl}{\operatorname{cl}}
\newcommand{\PH}{\operatorname{PH}}
\newcommand{\Ord}{\operatorname{Ord}}
\newcommand{\XPH}{\operatorname{XPH}}
\newcommand{\Rel}{\operatorname{Rel}}
\newcommand{\Ess}{\operatorname{Ess}}
\newcommand{\XPHT}{\operatorname{XPHT}}
\newcommand{\bt}{\mathfrak{b}}
\newcommand{\dt}{\mathfrak{d}}
\DeclareMathOperator{\im}{im}
\newcommand{\rel}{\operatorname{rel}}
\newcommand{\ord}{\operatorname{ord}}
\newcommand{\id}{\operatorname{id}}
\newcommand{\Eph}{\operatorname{Eph}}
\newcommand{\mf}{\mathfrak}
\renewcommand{\P}{\mathcal{P}}
\newcommand{\dist}{\operatorname{dist}}
\renewcommand{\d}{\mbox{dist}}
\newcommand{\PM}{\operatorname{PM}}
\begin{document}

\maketitle

\begin{abstract}
The Extended Persistent Homology Transform (XPHT) is a topological transform which takes as input a shape embedded in Euclidean space, and to each unit vector assigns the extended persistence module of the height function over that shape with respect to that direction. We can define a distance between two shapes by integrating over the sphere the distance between their respective extended persistence modules. By using extended persistence we get finite distances between shapes even when they have different Betti numbers. We use Morse theory to show that the extended persistence of a height function over a manifold with boundary can be deduced from the extended persistence for that height function restricted to the boundary, alongside labels on the critical points as positive or negative critical. We study the application of the XPHT to binary images; outlining an algorithm for efficient calculation of the XPHT exploiting relationships between the PHT of the boundary curves to the extended persistence of the foreground.
\end{abstract}

\section{Introduction}
\label{sec:introduction}

The fundamental goal in statistical shape analysis is to define and compute meaningful distances between different subsets of Euclidean space. A recent landmark-free approach to quantify both the geometry and topology of a shape is to use a topological transform such as the Persistent Homology Transform (PHT) or the Euler Characteristic Transform (ECT). Both of these transforms take a shape $M$, viewed as a subset $\R^n$, and associate to each direction $v\in S^{n-1}$ a shape summary obtained by scanning $M$ in the direction $v$, calculating the persistent homology ($\PH(M,v)$) and the Euler curve respectively. 

Different formulations of the $\PHT$ and $\ECT$ have been demonstrably useful in diverse applications including prediction of disease progression from the shapes of tumours (\cite{crawford2020predicting, shboul2019feature}), identification of different cultivars from the shapes of leaves \cite{zhang2021mfcis}, quantification of morphological variation of barley seeds \cite{amezquita2022measuring}, and identification of  structural differences among proteins \cite{tang2022topological}. This paper introduces an improved variant of this topological transform called the Extended Persistent Homology Transform (XPHT) and establishes properties that significantly reduce the time required to compute it.

A limitation of the $\PHT$ is it does not work well with shapes that have different Betti numbers (the ranks of the homology groups). For $M_1, M_2\subset \R^n$, the ($p$-)distance between their persistent homology transforms is defined as
$$\dist_p(\PHT(M_1), \PHT(M_2))^p=\int_{S^{n-1}} W_p(\PH(M_1, v), \PH(M_2, v))^p\,dv$$ 
where $W_p(\cdot, \cdot)$ is the p-Wasserstein distance.
If $M_1$ and $M_2$ have different Betti numbers, then $W_p(\PH(M_1,v),\PH(M_2,v))=\infty$, for all $v$, and thus 
$$\dist_p(\PHT(M_1),\PHT(M_2))=\infty.$$ One potential work-around would be to replace the Wasserstein distance with a different metric on the space of persistence modules, one where having different Betti numbers does not enforce infinite distance. A more satisfying approach is to replace persistent homology with extended persistent homology.

The theory of extended persistence for functions over a manifold $X$ was developed in \cite{cohen2009extending} to quantify the support of the essential homology classes of $X$ (these essential classes are the elements of $H_*(X)$).  Even when the domains have different Betti numbers we still have a finite Wasserstein distance between their extended persistence modules. This motivates the Extended Persistent Homology Transform ($\XPHT$) as a topological transform, which is defined in exactly the same manner as the PHT but replacing regular persistent homology with extended persistent homology. 
By quantifying the size of essential classes it is possible for $\XPHT$ to be stable with respect to the addition to, or removal of, ``small'' essential classes in the different domains. 
For example, if we add an isolated noisy pixel to a binary image then the change in the XPHT will be commensurate with the size of a pixel. This extra stability can provide greater power and robustness to statistical methods that use distances between shapes derived from the XPHT. 
As this paper is focused on computational aspects of the XPHT, comprehensive stability results are left as a future research direction.

We believe that extended persistence is currently under-utilised within applied topology and this paper addresses three potential obstacles. Firstly, we make extended persistence modules more theoretically accessible by placing them within a generalised framework that includes both regular persistence as well as extended persistence. Secondly, we provide motivation with an important example (in the form of the $\XPHT$) where using extended persistence provides a qualitative improvement in usefulness. Lastly, we provide insights on how to ease the computation of extended persistence in the important case of height functions, with implemented code for binary images.

\subsection{Outline of paper} 

The mathematical treatment of the XPHT and algorithms to compute it requires the adaption and extension of many standard definitions within applied topology.  We cover this material in some detail to make the paper more self-contained and to provide a cohesive perspective on results from different areas of the literature.

The original definition of extended persistence in~\cite{cohen2009extending} is made for functions defined on a smooth or piecewise-linear (PL) manifold and concatenates two homology sequences, the standard inclusion-induced persistent homology sequence for the sublevel set filtration, followed by a descending relative homology sequence for superlevel sets.  
In section~\ref{sec:extended}, we reformulate this as a persistence module over a totally ordered set, with all transition maps defined as those induced on relative homology by inclusions of a pair of spaces.  These spaces are defined by a real-valued function on a triangulated manifold with boundary, $f: M \to \R$.   
We then establish a relationship between the intervals of extended persistence modules of $f$ and $(-f)$, which is one of the results required to reduce computation time for the XPHT. 
 
In section~\ref{sec:wasserstein} we generalise the definition of Wasserstein and bottleneck distances between persistence diagrams to apply to persistence modules over a totally ordered metric space, with a defined set of ephemeral (zero-length) intervals. The Wasserstein and bottleneck distances are optimal transport metrics with transport plans that include a bijection between chosen subsets of intervals and then subsets of unmatched intervals. To define the cost of a transportation plan we need a distance between intervals and cost of having an interval unmatched.
We show our definition agrees with the existing definitions of bottleneck distance between extended persistence diagrams.

A key theoretical insight of our work, and one which makes the $\XPHT$ feasible to compute, is that for manifolds with boundary embedded in $\R^n$ the extended persistent homology of a height function over $M$ can be deduced from the persistent homology of the same height function restricted to $\partial M$.  This is the topic of section~\ref{sec:morse}. 
The proof of this insight requires ideas from Morse theory for manifolds with boundary, in both the smooth and piecewise-linear settings.  This background material is covered in section~\ref{sec:MorseSetup}. 
We also precisely state the relationship between birth and death parameters of  extended persistence in terms of the different kinds of critical points of a smooth or PL Morse function on a  manifold with boundary.
Section~\ref{subsec:XPH_boundary} then develops results specifically for the case of a directional height function. 
It is worth noting that any subset of $\R^n$ with positive weak feature size is arbitrarily close to a $n$-manifold with boundary by taking an $\epsilon$ expansion. This means the restriction to $n$-manifolds with boundary is reasonable from an application standpoint.
 
Adapting the definition of the persistent homology transform (PHT) to extended persistence is straightforward.  We cover this material in section~\ref{sec:xpht}.

Shape analysis of objects in digital images is an application domain with wide interest. 
Objects in binary images can be modelled as two dimensional manifolds with boundary lying in the plane, so our XPHT results  apply. 
In section~\ref{sec:Images} we define boundary curves that separate foreground and background connected components consistent with a chosen digital adjacency, and show that these boundary curves are disjoint simple closed PL 1-manifolds.  
Digital grids create degeneracy in the height function critical values, so we derive additional results that establish the correctness of our implemented algorithms.
Finally, in Section~\ref{sec:Implementation} we illustrate our R-package implementation by comparing the XPHT of the letters `A' and `g' rendered in a variety of standard fonts.  We find the $\XPHT$ of the upper case `A' naturally  separates  the serif and sans-serif fonts, and that the $\XPHT$ of the lower case `g' naturally separates the single-storey and the double-storey fonts. 

 \subsection{Relation to the Alexander Duality for Extended Persistence } 
A form of Alexander Duality for extended persistence was proved in \cite{edelsbrunner_alexander_2012}. 
That paper considers the decomposition of the sphere into two sets $U$, $V$ with $U\cup V=S^{n}$ and $U\cap V$ a $(n-1)$-manifold, and proves results about the extended persistence of a perfect Morse function $f$  over these sets. 
A perfect Morse function over $S^{n}$ is a smooth function with exactly two critical points, one minimum and one maximum. 
Edelsbrunner and Kerber prove that the extended persistence module of $U\cap V$ is the direct sum of those for $U$ and $V$ (with minor adjustments for homology dimension zero). The statement of our Theorem \ref{thm:oplus} is effectively a special case of their result. 
However, our proof is very different as it is based on Morse theory instead of Alexander Duality. 
Another key difference in our results is that we show how the extended persistence module for $U\cap V$ splits into the two different parts (Theorem \ref{thm:submodule}); this is not established in~\cite{edelsbrunner_alexander_2012}. 
Since our ultimate goal is to calculate the extended persistence of $U$ from that of $U\cap V$ this splitting criteria is pivotal.

\section{Extended persistence modules}
\label{sec:extended} 
\subsection{Persistence modules over totally ordered sets}

Commonly persistence modules are defined with an underlying parameter space a subset of $\R$ but they can be defined where the parameter space is a totally ordered set. This approach will make working with extended persistence substantially cleaner and more intuitive as we will want to split our parameter space into ordinary and relative homology parameter types.  

\begin{definition}
A \emph{totally ordered set} $(\Theta,\leq)$ is a set $\Theta$ with a relation $\leq$ which is
\begin{itemize}
\item Reflexive: that is $\alpha \leq \alpha$ for all $\alpha \in \Theta$,
\item Antisymmetric: that is $\alpha \leq \beta$ and $\beta \leq \alpha$ implies $\alpha=\beta$, 
\item Transitive: that is $\alpha\leq \beta$ and $\beta\leq \gamma$ implies $\alpha\leq \gamma$, and
\item Comparable: for all $\alpha,\beta$ either $\alpha\leq \beta$ or $\beta\leq \alpha$.
\end{itemize}
\end{definition}

\begin{definition}
Fix a field $\mathbb{F}$ and $\Theta$ a totally ordered set. A \emph{persistence module} $\mathcal{P}$ over $\Theta$ is a family $\{V_\alpha\}_{\alpha\in \Theta}$ of $\mathbb{F}$-vector spaces indexed by elements of $\Theta$, together with a family of homomorphism $\{\varphi_\alpha^\beta:V_\alpha\to V_\beta\}$ such that  $\varphi^\gamma_\alpha= \varphi^\gamma_\beta \circ \varphi^\beta_\alpha $ for all $\alpha \leq \beta \leq \gamma$, and $\varphi_\alpha^\alpha= \id V_\alpha$. We call the $\varphi_\alpha^\beta$ the \emph{transition maps}. We say $\mathcal{P}$ is \emph{pointwise finite dimensional} if the $V_\alpha$ are finite dimensional for all $\alpha\in \Theta$. 
\end{definition}

In the algebraic theory of persistence modules there are often technical requirements about tameness, and being pointwise finite dimensional will generally be a sufficient condition. This is a very reasonable assumption in almost any application. The most important algebraic result is the decomposition theorem. This gives a complete yet discrete description of a persistence module up to isomorphism. We will decompose persistence modules into sums of interval modules, but first we must define interval persistence modules.

We are all familiar with intervals that are subsets of the real line. 
We  generalise this notion to any totally ordered set as follows.

\begin{definition}
An \emph{interval} in a totally ordered space $(\Theta, \leq)$ is a subset $I\subset \Theta$ such that for all $\alpha\in \Theta$ either $\alpha \in I$, or $\alpha \leq \theta$ for all $\theta\in  I$,  or $\theta\leq \alpha$ for all $\theta\in  I$. 
An \emph{interval module} over an interval $I$ is a persistence module $\mathcal{I}_I$ with attached vector spaces 
\begin{align*}
V_\alpha&=\begin{cases}
\mathbb{F} \text{ for }\alpha \in I\\
0 \text{ for } \alpha \notin I
\end{cases}
\end{align*}
and transition maps are the identity, $\operatorname{id}_\mathbb{F}$, when both domain and codomain are $\mathbb{F}$ and $0$ otherwise.

For each interval module $\mathcal{I}_I$ we call $\mf{b}(\mathcal{I}_I)=\inf{I}$ the \emph{birth} parameter and $\mf{d}(\mathcal{I}_I)=\sup{I}$ the \emph{death} parameter.
\end{definition}

The nomenclature of ``interval'' was introduced for persistence modules with parameter space $\R$ but it is still reasonable even in the generalised setting of totally ordered sets. 
If we can map the totally ordered set to a subset of the real line, say $f:\Theta \to \R$, in a way that respects the order relation, then we can view each interval module as having support $f^{-1}(I)$ where $I\subset \R$ is some interval.
 
\begin{theorem}[\cite{crawley2015decomposition} Theorem 1.1]\label{thm:intervaldecomposition}
A pointwise finite dimensional persistence module over any subset of $\mathbb{R}$ admits an \emph{interval decomposition}. That is, there is a multiset of intervals $S$  such that the module is isomorphic to a direct sum of interval modules
$$\bigoplus\limits_{I\in S} \mathcal{I}_I$$
where each $\mathcal{I}_I$ is an interval module. This decomposition is unique up to isomorphism. 
\end{theorem}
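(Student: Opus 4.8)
The plan is to recast Theorem~\ref{thm:intervaldecomposition} as the Krull--Remak--Schmidt--Azumaya theory for persistence modules and to split it into three parts: a classification of the indecomposable pointwise finite dimensional modules, the existence of a decomposition into indecomposables, and the uniqueness of that decomposition. The guiding observation is that every part of the statement is elementary when $\Theta \subseteq \R$ is finite, so the entire content is the passage to an infinite index set.

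First I would dispose of the finite case. If $\Theta = \{t_1 < \cdots < t_n\}$, then a persistence module over $\Theta$ is exactly a finite dimensional representation of the equioriented quiver $t_1 \to t_2 \to \cdots \to t_n$ of Dynkin type $A_n$; by Gabriel's theorem its indecomposable representations are precisely the interval modules, and since every $V_\alpha$ is finite dimensional the classical Krull--Schmidt theorem gives a decomposition into indecomposables, unique up to isomorphism and reordering. Next, for an arbitrary subset $\Theta \subseteq \R$, I would record two facts. The first is that the endomorphism ring of an interval module $\mathcal{I}_I$ is the field $\mathbb{F}$ (in particular local), which follows directly from the transition maps being identities or zero. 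The second is that every indecomposable pfd module over $\Theta$ is an interval module; here the key device is a \emph{functorial filtration}. Because each $V_\beta$ is finite dimensional, the descending chain of images $\im\varphi_\alpha^\beta$ (indexed by $\alpha \le \beta$) and the ascending chain of kernels $\ker\varphi_\beta^\alpha$ (indexed by $\alpha \ge \beta$) each stabilize, and the resulting subspaces of $V_\beta$ are carried to themselves by every endomorphism of the module; reading off the parameters at which these subspaces jump produces an interval-module direct summand inside any nonzero module, so an indecomposable module must be a single interval module.

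The genuine obstacle is the existence of a decomposition into indecomposables when $\Theta$ is infinite: for infinite dimensional modules this is not automatic, and in particular a maximal independent family of interval submodules need not span $M$, so a naive Zorn's lemma argument fails. Here pointwise finite dimensionality must be used decisively. Following Crawley-Boevey, I would show that $M$ is a direct sum of modules with local endomorphism ring --- either by verifying the hypotheses of an infinite Krull--Schmidt criterion that holds for pfd modules over $\R$, or by using the functorial filtration above to construct an explicit family of interval submodules whose internal direct sum exhausts $M$. Once $M$ is written as a direct sum of modules with local endomorphism rings, Azumaya's theorem yields that the underlying multiset of summands is unique up to isomorphism, and combining this with the classification of indecomposables gives exactly the asserted interval decomposition together with its uniqueness. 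Thus the only hard step is existence for infinite $\Theta$; the classification of indecomposables reduces to the $A_n$ quiver, and uniqueness is Azumaya's theorem applied to modules with field endomorphism rings.
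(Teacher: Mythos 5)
The paper does not prove this statement; it is quoted verbatim as Theorem~1.1 of Crawley-Boevey's paper and used as a black box, so there is no internal proof to compare against. Your outline is a faithful roadmap of the published proof: the reduction of the finite case to the $A_n$ quiver, the observation that interval modules have endomorphism ring $\mathbb{F}$ (hence local), the functorial-filtration device exploiting stabilisation of the chains $\im\varphi_\alpha^\beta$ and $\ker\varphi_\beta^\alpha$, and Azumaya's theorem for uniqueness are all the right ingredients, and you correctly locate the difficulty in the existence of the decomposition over an infinite index set.

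However, as a proof the proposal has a genuine gap precisely at the step you flag as the hard one: you say you would establish existence ``either by verifying the hypotheses of an infinite Krull--Schmidt criterion \dots or by using the functorial filtration above to construct an explicit family of interval submodules whose internal direct sum exhausts $M$,'' but neither alternative is carried out, and the second is essentially a restatement of the theorem. There is also an unresolved tension earlier in the argument: you derive the classification of indecomposables from the claim that the functorial filtration ``produces an interval-module direct summand inside any nonzero module,'' yet having one interval summand in every nonzero module does not by itself yield a full decomposition --- iterating requires exactly the exhaustion/maximality argument you correctly note fails naively. In Crawley-Boevey's actual argument the functorial filtration is used globally, with compatible choices of vector-space complements across the sections of the filtration and a covering lemma showing these exhaust each $V_\beta$; that construction, not the extraction of a single summand, is where pointwise finite dimensionality does its decisive work. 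To turn your outline into a proof you would need to supply that covering argument (or a verified infinite Krull--Schmidt criterion) explicitly.
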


For the rest of the paper we will be assuming all persistence modules are pointwise finite dimensional and that the the underlying parameter space is equivalent to a subset of $\mathbb{R}$ (with respect to the order relation), and thus we can always assume an interval decomposition occurs.

Given a persistence module $\mathcal{P}=\bigoplus\limits_{I\in S^{\mathcal{P}}} \mathcal{I}_I$ we will use 
$$\mf{b}(\mathcal{P})=\{\mf{b}(\mathcal{\mathcal{I}_I}):I\in S^{\mathcal{P}}\}\qquad \text{ and } \qquad 
\mf{d}(\mathcal{P})=\{\mf{d}(\mathcal{\mathcal{I}_I}):I\in S^{\mathcal{P}}\}$$
to denote the multiset of birth parameters and death parameters in the interval decomposition of $\mathcal{P}$. 

Readers may be familiar with persistence diagrams. Persistence diagrams are a graphical representation of a persistence barcode. If we take our ordered set to be $\R$ then the parameters are real numbers. We can represent each interval module in the persistence module decomposition by a point in $\mathbb{R}^2$ with the x-coordinate the birth parameter, and its y-coordinate the death parameter. We then construct the \emph{persistence diagram} as the resulting multiset of points in $\R^2$ together with all the points along the diagonal in the plane.

\subsection{Extended persistence}
\label{ssec:ext_pers}

Extended persistence combines the regular filtration of sublevel sets for $f: M \to \R$ with a filtration of \emph{relative} homology groups of $M$ relative to superlevel sets of $f$. This provides a wealth of extra information about the structure of $M$, especially in the case that $M$ is a manifold with boundary.

We first recall the definition of relative homology, and the maps induced by the inclusion of a pair.
Given a subcomplex $X\subset Y$ we observe that the boundary map on $C_*(Y)$ leaves $C_*(X)$ invariant. This means we can define a chain complex $C_*(Y,X)$ where $C_k(Y,X)=C_k(Y)/C_k(X)$ and the boundary map is
$$\partial_k^{(Y,X)}(\alpha +C_k(X))=\partial(\alpha)+C_{k-1}(X).$$
We can then define the relative homology groups by 
$$H_k(Y,X)=\ker \partial_k^{(Y,X)}/\im \partial^{(Y,X)}_{k+1}.$$
Relative homology is a generalisation of normal homology as $H_k(Y)=H_k(Y,\emptyset)$.

If $Y\subset B$ and $X\subset A\subset B$   we have an inclusion of pairs $(Y,X)\subset (B,A)$. This inclusion of pairs induces a map between their relative homology groups, $\iota_*: H_k(Y,X) \to H_k(B,A)$, with $\iota(\alpha +C_k(X))=\alpha+C_k(A).$

We are now ready to define the extended persistence module as a form of persistence modules. The parameter space over which the persistence module is constructed will be the union of two sets --- one corresponding to ordinary homology and the other corresponding to relative homology. Set $O= \{(t,\text{Ord}): t\in \R\} $ and $R= \{(t,\text{Rel}): t\in \R\}$.
Let $\Theta=O \cup R$. We define a total order over $\Theta$ by 
\begin{align*}
(s,\text{Ord})&<(t,\text{Ord}) \text{ when }s<t\\
(s,\text{Rel})&<(t,\text{Rel})  \text{ when }s>t\\
(s,\text{Ord})&<(t,\text{Rel}) \text{ for all }s,t
\end{align*}

We then assign vector spaces to each $\theta \in \Theta$  defined in terms of sublevel and superlevel sets. 
As input we have a topological space $M$ with a bounded function $f:M \to \R$.
Let $M_s=f^{-1}(-\infty,s]$ and $M^s=f^{-1}[s,\infty)$ denote the sublevel and superlevel sets of $f:M\to\R$. We assign the vector spaces as $V_{(t,\text{Ord})}=H_k(M_t, \emptyset)$ and $V_{(t,\text{Rel})}=H_k(M,M^t)$.
The transition maps are the natural ones induced by inclusions of a pair.

The compositions of induced maps of inclusions of a pair is the corresponding induced map by inclusion. This means that the transition maps commute as needed and we have constructed a persistence module.

Each interval in the interval decomposition will be supported over some interval of $\Theta$ which will be one of three types; if the supports contains only the parameters in $O$ we call it \emph{ordinary}, if the support only contains parameters in $R$ we call it \emph{relative}. Finally, the persistent homology class might exist for parameters spanning both $O$ and $R$, in which case we call it \emph{essential}. Essential persistent homology classes exist in the vector space $H_k(M,\emptyset)= H_k(M)$ and in classical persistent homology are assigned a death parameter of infinity.  
The object in Fig.~\ref{fig:snail} illustrates the parameter space $\Theta$ and has one class of each type.

\begin{figure}
\includegraphics[width=\textwidth]{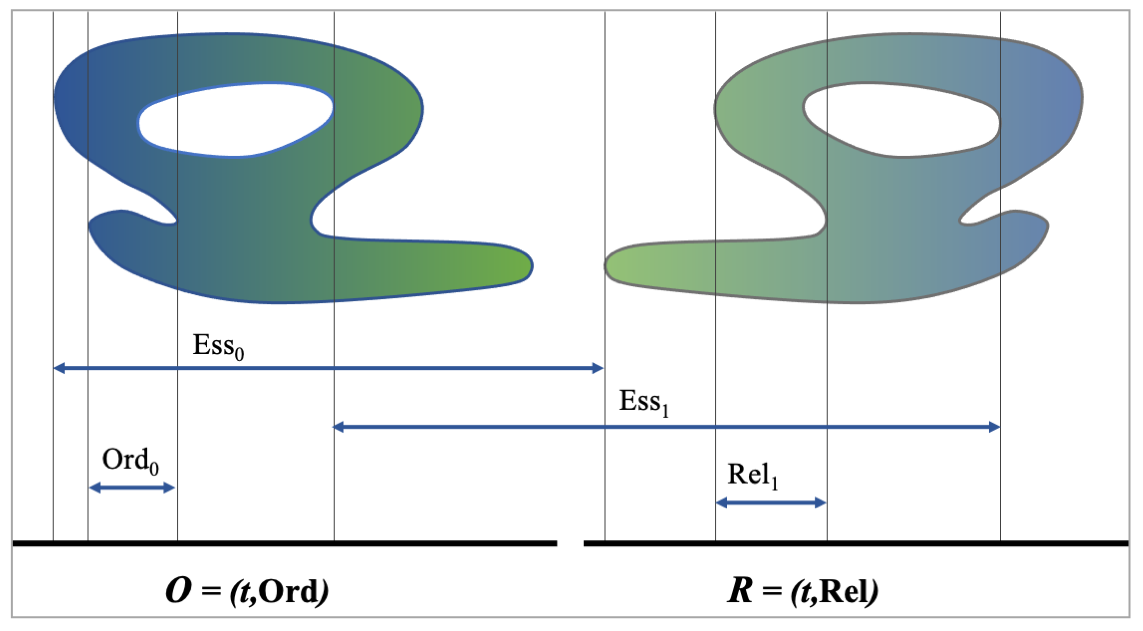}
\caption{An illustration of extended persistence intervals for a rather abstract snail, $M$. The function $f: M \to \R$ is simply the $x$-coordinate and the function value is denoted by the blue-green colour gradient.  We have drawn a copy of $M$ with its $x$-coordinate reflected to illustrate the superlevel sets used in the relative part of the sequence. }
\label{fig:snail}
\end{figure}

\begin{remark}
To preempt any confusion, we note a difference in our nomenclature from some papers, including \cite{cohen2009extending}. 
What we call essential classes above are instead called ``extended''.  We prefer the term ``essential'' as these classes do indeed correspond to the essential classes of $M$. Furthermore it means we can use ``extended'' to refer to any class in the extended persistence module.
\end{remark}

We can partition the elements of the interval decomposition of extended persistent homology into three sets depending on whether they are ordinary, relative or essential.  
Following \cite{carlsson2019parametrized} we can further split the essential classes into \emph{positive} and \emph{negative} types.  For an essential class with birth time $(s,\text{Ord})$ and death time $(t, \text{Rel})$, we say it is positive if $s<t$ and negative if $s>t$. 

We can express the extended persistence module as a direct sum of ordinary, relative and essential persistence modules.  For an extended persistence module constructed from sublevel and superlevel set filtrations of $f:M \to \R$ denote these submodules by $\Ord_k(M,f)$, $\Rel_k(M,f)$ and $\Ess^+_k(M,f)$ and $\Ess^-_k(M,f)$, which are each persistence modules over $\R$. 
For $\Rel_k(M,f)$ and  $\Ess^-_k(M,f)$ the order of parameters in $\R$ is reversed --- that is, the real value associated with the birth time is larger than the real value associated with the death time. 
In the case of subsets of $\R^2$ (cf.\ the example in Fig.~\ref{fig:snail}) we will show that $\Ess_0=\Ess_0^+$ and $\Ess_1=\Ess_1^-$ and thus we do not need to indicate the sign of the essential classes.

\subsubsection{Duality}  
 
There is a form of duality between the ordinary persistent homology of $f:M\to \R$ and the relative persistent homology of $(-f):M\to \R$. This follows from results in \cite{de2011dualities} but that paper uses substantially different notation to us. 
Furthermore, that paper considers filtrations of simplicial complexes, a context where we cannot naively switch between sublevel and superlevel sets.  For these reasons, we rewrite their proposition to suit the requirements of our setting. 

\begin{proposition}[Proposition 2.4 in \cite{de2011dualities}]\label{prop:duality}
Let $\mathbb{M}=\{M_t\}$ be a filtration of simplicial complexes.  Let $\PH_k(\mathbb{M})$ be the persistence module of $k$-dimensional persistent homology of the filtration $\mathbb{M}$.  Let $\PH^0_k(\mathbb{M})$ be the restriction of $\PH_k(\mathbb{M})$ to persistence classes with finite lifetimes. Let $\PH_{k+1}(M_\infty, \mathbb{M})$ be the persistence module of relative homology classes $H_{k+1}(M_\infty, M_t)$ and let $\PH_{k+1}^0(M_\infty, \mathbb{M})$ be the restriction of  $\PH_{k+1}(M_\infty, \mathbb{M})$ to persistence classes with finite lifetimes. 
Then  $\PH^0_k(\mathbb{M})$  and $\PH_{k+1}^0(M_\infty, \mathbb{M})$ are isomorphic.
\end{proposition}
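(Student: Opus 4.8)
The plan is to realise the claimed isomorphism as (the restriction of) the connecting homomorphism of the long exact sequence of the pair $(M_\infty,M_t)$, turned into a morphism of persistence modules by letting $t$ vary. For each parameter $t$, the inclusion $M_t\hookrightarrow M_\infty$ gives an exact sequence
$$\cdots\to H_{k+1}(M_\infty)\xrightarrow{\ j_t\ }H_{k+1}(M_\infty, M_t)\xrightarrow{\ \partial_t\ }H_k(M_t)\xrightarrow{\ \iota_t\ }H_k(M_\infty)\to\cdots ,$$
and for $s\le t$ the inclusions of pairs $(M_\infty,M_s)\subset(M_\infty,M_t)$ assemble these into a commuting ladder. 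Since kernels, images and cokernels of morphisms of persistence modules are formed pointwise, this yields an exact sequence of persistence modules. Writing $\mathcal{H}_k$ for the constant module with value $H_k(M_\infty)$ at every parameter (identity transition maps), we obtain a morphism $\partial\colon\PH_{k+1}(M_\infty, \mathbb{M})\to\PH_k(\mathbb{M})$ with $\im(\partial)=\ker(\iota)$ and $\ker(\partial)=\im(j)$, where $\iota\colon\PH_k(\mathbb{M})\to\mathcal{H}_k$ and $j\colon\mathcal{H}_{k+1}\to\PH_{k+1}(M_\infty, \mathbb{M})$ are the assembled maps. Hence $\partial$ induces an isomorphism $\PH_{k+1}(M_\infty, \mathbb{M})/\im(j)\xrightarrow{\ \sim\ }\ker(\iota)$, and the task reduces to identifying the two sides with the finite-lifetime submodules.

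For the ordinary side I would show $\ker(\iota)=\PH^0_k(\mathbb{M})$. Since simplicial homology commutes with the directed colimit $M_\infty=\bigcup_t M_t$ and the filtration is empty for $t$ small enough, every interval summand of $\PH_k(\mathbb{M})$ has finite birth, and such a summand $\mathcal{I}_I$ lies in $\ker(\iota)$ exactly when $\sup I<\infty$: a cycle that bounds in $M_\infty$ already bounds at some finite stage, whereas a cycle that does not is nonzero in $H_k(M_\infty)$ at every parameter of its support. Comparing interval decompositions summand by summand gives $\ker(\iota)=\PH^0_k(\mathbb{M})$. On the relative side, every class of $H_{k+1}(M_\infty, M_t)$ dies at a finite stage because $\varinjlim_t H_{k+1}(M_\infty, M_t)=H_{k+1}(M_\infty,M_\infty)=0$; hence the summands of $\PH_{k+1}(M_\infty, \mathbb{M})$ are either bounded intervals $\mathcal{I}_{[a,d]}$ — whose direct sum is the finite-lifetime part $\PH_{k+1}^0(M_\infty, \mathbb{M})$ — or half-infinite intervals $\mathcal{I}_{(-\infty,d]}$, whose direct sum I denote $P^\infty$.

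It remains to check that $\im(j)=P^\infty$, after which $\partial$ descends to the desired isomorphism $\PH_{k+1}^0(M_\infty, \mathbb{M})=\PH_{k+1}(M_\infty, \mathbb{M})/P^\infty\xrightarrow{\ \sim\ }\ker(\iota)=\PH^0_k(\mathbb{M})$. One inclusion is immediate: $\partial$ is the zero map at any parameter $t_-$ with $M_{t_-}=\emptyset$ (there $H_k(M_{t_-})=0$), and every element of $P^\infty$ at an arbitrary parameter is the image under a transition map of an element at $t_-$, so $\partial$ vanishes on $P^\infty$ and thus $P^\infty\subseteq\ker(\partial)=\im(j)$. For the reverse inclusion I would use that $\im(j)$, being the pointwise image of a map out of a constant module, has surjective transition maps, while no nonzero submodule of a direct sum of bounded interval modules has that property; hence the projection of $\im(j)$ onto $\PH_{k+1}^0(M_\infty, \mathbb{M})$ vanishes and $\im(j)\subseteq P^\infty$. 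I expect this last step — translating the pointwise exactness into statements about interval decompositions and pinning down $\im(j)$ precisely — to require the most care (a mild tameness assumption on the filtration, e.g.\ finite type below each level, is convenient here); the input from topology, namely the long exact sequence, its naturality, and the colimit computations of $H_*(M_\infty)$ and of relative homology, is entirely standard.
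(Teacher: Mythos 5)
The paper itself offers no proof of this proposition --- it is quoted as Proposition 2.4 of \cite{de2011dualities} and used as a black box to derive Corollary \ref{cor:duality} --- so there is no in-paper argument to compare against. Your proof via the long exact sequence of the pair $(M_\infty, M_t)$, assembled by naturality into an exact sequence of persistence modules and then trimmed by identifying $\ker(\iota)$ with the bounded-interval summand on the absolute side and $\im(j)$ with the unbounded-below summand on the relative side, is correct and is essentially the argument of the cited source. The only delicate points are the ones you already flag: translating the pointwise descriptions of $\ker(\iota)$ and $\im(j)$ into statements about a (non-canonical) interval decomposition, which goes through under the paper's standing pointwise-finite-dimensionality assumption together with the facts that $M_t=\emptyset$ for $t$ small, $H_k(M_\infty)=\varinjlim_t H_k(M_t)$, and $\varinjlim_t H_{k+1}(M_\infty,M_t)=0$.
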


\begin{corollary}\label{cor:duality}
Let $M$ be a finite simplicial complex, with vertex set $V$, and geometric realisation $|M|$. Let $f: |M|\to \R$ be a continuous map such that on each cell $f$ is the linear interpolation of the values on its vertices. We have a bijection $\rho$ between the interval modules in the interval decomposition of $\Ord_k(M,f)$ to that of $\Rel_{k+1}(M,(-f))$ with
$$\rho(\mathcal{I}_{[(b,\ord), (d, \ord))})=\mathcal{I}_{[(-b, \rel),(-d,\rel))}$$
 \end{corollary}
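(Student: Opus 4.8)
The plan is to deduce the corollary from Proposition~\ref{prop:duality} by realising both $\Ord_k(M,f)$ and $\Rel_{k+1}(M,-f)$ as reindexed pieces of that duality statement applied to a single simplicial filtration, namely the lower-star filtration of $f$. For $s\in\R$ let $L_s\subseteq M$ be the subcomplex of all simplices whose vertices all have $f$-value at most $s$. Then $\{L_s\}_{s\in\R}$ is an increasing filtration of finite simplicial complexes with $\bigcup_s L_s = M$, changing only at the finitely many values in $f(V)$. A standard simplex-by-simplex deformation retraction argument shows that the inclusion $|L_s|\hookrightarrow |M|_s$, where $|M|_s := f^{-1}(-\infty,s]$, is a homotopy equivalence for every $s$; since these inclusions commute with the inclusions $|L_s|\hookrightarrow|L_t|$ and $|M|_s\hookrightarrow|M|_t$ for $s\le t$, they assemble into an isomorphism of persistence modules $\{H_\bullet(L_s)\}_s\xrightarrow{\cong}\{H_\bullet(|M|_s)\}_s$, and hence — using the long exact sequences of the pairs together with the five lemma — also an isomorphism $\{H_\bullet(M,L_s)\}_s\xrightarrow{\cong}\{H_\bullet(|M|,|M|_s)\}_s$.

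Next I would match these with the two sides of the corollary. By definition $\Ord_k(M,f)$ is the union of the finite-length interval summands of $\{H_k(|M|_s)\}_s$, read with $\ord$-parameter equal to the level $s$, so the first isomorphism gives $\Ord_k(M,f)\cong\PH_k^0(\{L_s\})$. On the other hand the relative module of $-f$ has $V_{(t,\rel)} = H_{k+1}(|M|,(-f)^{-1}[t,\infty)) = H_{k+1}(|M|,|M|_{-t})$, so under the substitution $s=-t$ it is exactly $\{H_{k+1}(|M|,|M|_s)\}_s$ with $\rel$-parameter $-s$; the degree-$(k{+}1)$ essential classes are precisely those already present at $s=-\infty$ (equivalently $t=+\infty$), i.e.\ the infinite-lifetime classes, so restricting to finite lifetimes yields $\Rel_{k+1}(M,-f)\cong\PH_{k+1}^0(M_\infty,\{L_s\})$, again with $\rel$-parameter $-s$. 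Proposition~\ref{prop:duality} then identifies $\PH_k^0(\{L_s\})$ with $\PH_{k+1}^0(M_\infty,\{L_s\})$ as persistence modules over the common index set, so by uniqueness of the interval decomposition (Theorem~\ref{thm:intervaldecomposition}) an interval $[b,d)$ in $s$ on one side matches the interval $[b,d)$ in $s$ on the other. Composing these interval-preserving bijections and tracking the reindexings, the summand $\mathcal{I}_{[(b,\ord),(d,\ord))}$ of $\Ord_k(M,f)$ maps to the interval $[b,d)$ in $\PH_k^0(\{L_s\})$, then to $[b,d)$ in $\PH_{k+1}^0(M_\infty,\{L_s\})$, and finally — since the $\rel$-parameter is $-s$ and the order on $R$ reverses $\R$, the set $\{(t,\rel): -t\in[b,d)\}$ is the order-interval of $R$ from its least element $(-b,\rel)$ (included) up to $(-d,\rel)$ (excluded) — to the summand $\mathcal{I}_{[(-b,\rel),(-d,\rel))}$ of $\Rel_{k+1}(M,-f)$. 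This composite bijection is the desired $\rho$.

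The only genuinely topological ingredient is the homotopy equivalence $|L_s|\hookrightarrow|M|_s$; this is the standard fact underlying the usual computation of PL sublevel-set persistence, and I would state and cite it rather than reprove it. I expect the bookkeeping to be where care is really needed: correctly matching ``finite lifetime'' in Proposition~\ref{prop:duality} with the purely relative (non-essential) summands of our extended module, and tracking the half-open endpoints through the simultaneous order-reversal of $R$ and the sign change $s\leftrightarrow -t$ — it is easy to land instead on $[(-d,\rel),(-b,\rel))$ or on the wrong open/closed convention.
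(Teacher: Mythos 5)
Your proposal is correct and follows essentially the same route as the paper: apply Proposition~\ref{prop:duality} to the sublevel filtration of $f$, observe that $f^{-1}(-\infty,t] = (-f)^{-1}[-t,\infty)$ so the relative module reindexes to $\Rel_{k+1}(M,-f)$, and track the endpoints through the order reversal to land on $\mathcal{I}_{[(-b,\rel),(-d,\rel))}$. The only difference is that you explicitly insert the lower-star filtration and the homotopy equivalence $|L_s|\hookrightarrow |M|_s$ to meet the simplicial hypothesis of Proposition~\ref{prop:duality}, a point the paper's proof passes over silently; this is a welcome tightening rather than a different argument.
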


\begin{proof}
The $\PH_{k+1}^0(M_\infty, M_t)$ of  \cite{de2011dualities} is the relative homology of $M$ with respect to the (increasing $t$) sequence $M_t = f^{-1}\left(-\infty, t \right]$.  But $ f^{-1}\left(-\infty, t \right]  = (-f)^{-1}\left[-t, \infty \right) $, so the sequence $M_t$ of sublevel sets of $f$ is identical to a sequence of superlevel sets, $M^s$, of $(-f)$, with $s = -t$.  
Note that when the filtration is expressed as superlevel sets of $(-f)$, the parameter $s$ is a decreasing one, as used in the relative part of an extended persistence module. 

From Proposition \ref{prop:duality}, we have a bijection between the intervals in the interval decompositions with $\mathcal{I}_{[b,d)}\subset \PH_k^0(\mathbb{M})$ matched with  $\mathcal{I}_{[b,d)}\subset \PH_{k+1}^0(M,\mathbb{M})$. Composing this with the reparameterisation to superlevel set notation we have  $\mathcal{I}_{[b,d)}\subset \PH_{k+1}^0(M,\mathbb{M})$ rewritten as  $\mathcal{I}_{[(-b,\rel),(-d, \rel))}\subset \Rel(M, (-f)).$
\end{proof}

We note that this duality result is quite different from the duality theorem of~\cite{cohen2009extending}, which is proved in the case that $M$ is a triangulated $d$-manifold.  That paper goes on to also establish a symmetry theorem for extended persistence for functions over manifolds without boundary, which we discuss in our notation and context below. 

\subsubsection{Symmetry}

In the case that $M$ is a manifold we find that the information content in extended persistence modules is greatly reduced by the isomorphisms established in the following result. 

\begin{proposition}[Symmetry theorem of \cite{cohen2009extending}]\label{prop:symmetry}
Let $M$ be a triangulated $d$-manifold and $f: M \to \R$ be a piecewise-linear function interpolating the values on the vertices of $M$.  There are bijections, $\psi_{\bullet}$, between submodules of extended persistence for $f$ and $(-f)$ as follows:
\begin{align*}
    \psi_{O} &: \Ord_{k}(M,f) \to  \Ord_{d-k-1}(M,-f) &\mathcal{I}_{[(b,\ord), (d, \ord))} \mapsto \mathcal{I}_{[(-d, \ord),(-b,\ord))} \\
    \psi_{E} &: \Ess_{k}(M,f) \to  \Ess_{d-k}(M,-f) &\mathcal{I}_{[(b,\ord), (d, \rel))} \mapsto \mathcal{I}_{[(-d, \ord),(-b,\rel))} \\
    \psi_{R} &: \Rel_{k}(M,f) \to  \Rel_{d-k+1}(M,-f) &\mathcal{I}_{[(b,\rel), (d, \rel))} \mapsto \mathcal{I}_{[(-d, \rel),(-b,\rel))} 	
\end{align*}
\end{proposition}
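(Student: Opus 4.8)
The statement is the Poincaré--Lefschetz duality (``symmetry'') theorem of~\cite{cohen2009extending}, recast in our framework, and the plan is to obtain it by composing two simpler identifications rather than redoing the classical argument verbatim. The first is a \emph{self-duality} of the extended persistence of a single function: for the closed $d$-manifold $M$, termwise Lefschetz duality will relate the degree-$k$ extended module of $f$ to the degree-$(d-k)$ extended module of the \emph{same} $f$, interchanging the ordinary and relative parts. The second is the elementary reparametrisation already used in Corollary~\ref{cor:duality}: since $f^{-1}(-\infty,t]=(-f)^{-1}[-t,\infty)$ and $f^{-1}[t,\infty)=(-f)^{-1}(-\infty,-t]$, Corollary~\ref{cor:duality} trades the relative part of $f$ for the ordinary part of $-f$. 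Running the first identification and then the second (in the right order) produces $\psi_O$ and $\psi_R$; $\psi_E$ will need a variant, since Corollary~\ref{cor:duality} only governs finite-lifetime classes.

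For the self-duality I would first subdivide so that $f$ is simplexwise injective, hence all critical values are distinct, and note that for a non-critical value $a$ the sublevel set $M_a$ and superlevel set $M^a$ are compact PL $d$-manifolds-with-boundary meeting exactly along the level set $f^{-1}(a)$, with $M=M_a\cup M^a$. Lefschetz duality for each of these two pieces, together with excision, gives natural isomorphisms $H_k(M_a)\cong H^{d-k}(M_a,f^{-1}(a))\cong H^{d-k}(M,M^a)$, $H_k(M,M^a)\cong H^{d-k}(M_a)$, and $H_k(M)\cong H^{d-k}(M)$. The key point is that these isomorphisms are natural with respect to the inclusions defining the transition maps of the extended persistence module --- this is naturality of the cap product with the fundamental class of $M$, respectively of a codimension-$0$ compact submanifold-with-boundary --- so, after applying the universal-coefficient isomorphism $H^j(-)\cong H_j(-)^{*}$, they assemble into an isomorphism of persistence modules between the degree-$k$ extended module of $f$ and the $\mathbb{F}$-linear dual of the degree-$(d-k)$ extended module of $f$. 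Dualising an interval decomposition and re-expressing the result over $\Theta$ via the unique order-reversing bijection of $\Theta$ (which exchanges $(t,\ord)\leftrightarrow(t,\rel)$) gives, summand by summand, $\Ord_k(M,f)\cong\Rel_{d-k}(M,f)$, $\Rel_k(M,f)\cong\Ord_{d-k}(M,f)$ and $\Ess_k(M,f)\cong\Ess_{d-k}(M,f)$, with $\mathcal{I}_{[(b,\ord),(d,\ord))}\mapsto\mathcal{I}_{[(d,\rel),(b,\rel))}$ on the ordinary/relative summands and $\mathcal{I}_{[(b,\ord),(d,\rel))}\mapsto\mathcal{I}_{[(d,\ord),(b,\rel))}$ on the essential summands (after the standard normalisation of half-open endpoints, which is unambiguous in the PL setting because critical values are isolated).

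I would then pass from $f$ to $-f$. Corollary~\ref{cor:duality} applied to $-f$ gives, for each $j$, a bijection $\rho$ from the summands of $\Ord_j(M,-f)$ to those of $\Rel_{j+1}(M,f)$ with $\mathcal{I}_{[(b,\ord),(d,\ord))}\mapsto\mathcal{I}_{[(-b,\rel),(-d,\rel))}$. Composing: $\psi_O=\rho^{-1}$ (with $j=d-k-1$) applied after the self-duality isomorphism $\Ord_k(M,f)\cong\Rel_{d-k}(M,f)$, which lands in $\Ord_{d-k-1}(M,-f)$; and $\psi_R$ is the self-duality isomorphism for $-f$, $\Ord_{k-1}(M,-f)\cong\Rel_{d-k+1}(M,-f)$, composed after $\rho^{-1}$ (with $j=k-1$), which carries $\Rel_k(M,f)$ to $\Ord_{k-1}(M,-f)$. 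Chasing the endpoint formulas through the two steps reproduces $\mathcal{I}_{[(b,\ord),(d,\ord))}\mapsto\mathcal{I}_{[(-d,\ord),(-b,\ord))}$ and $\mathcal{I}_{[(b,\rel),(d,\rel))}\mapsto\mathcal{I}_{[(-d,\rel),(-b,\rel))}$. For $\psi_E$, an essential class of any $g\colon M\to\R$ is a class $\alpha\in H_*(M)$ whose birth and death parameters are the thresholds $\inf\{c:\alpha\in\im(H_*(g^{-1}(-\infty,c])\to H_*(M))\}$ and $\sup\{c:\alpha\in\im(H_*(g^{-1}[c,\infty))\to H_*(M))\}$; the Poincaré isomorphism $H_k(M)\cong H_{d-k}(M)$ yields $\Ess_k(M,g)\cong\Ess_{d-k}(M,g)$, while the identity on $H_*(M)$, reread through $f^{-1}(-\infty,t]=(-f)^{-1}[-t,\infty)$, yields $\Ess_k(M,f)\cong\Ess_k(M,-f)$, and composing these and tracking how the two thresholds transform gives $\psi_E$ and pins down its effect on the birth--death pair. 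The one sub-step here that requires an actual argument, not just bookkeeping, is that the closed-manifold Poincaré isomorphism respects these sublevel/superlevel thresholds up to the reflection $c\mapsto-c$; this is precisely the essential-class (``extended'') part of the duality argument in~\cite{cohen2009extending}.

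I expect the main obstacle to be the naturality claim in the self-duality step: one must verify that the Lefschetz and excision isomorphisms for the varying pair $(M_a,M^a)$ are natural enough in $a$ to glue into a morphism of persistence modules. This is the only place where $d$-manifold topology, rather than combinatorics of $\Theta$, is genuinely used, and in the PL category it forces one to choose regular neighbourhoods of the level sets $f^{-1}(a)$ compatibly as $a$ varies and to control the excisions across the finitely many critical values. Everything else --- transporting interval-endpoint data through the order-reversing involution on $\Theta$, through Corollary~\ref{cor:duality}, and establishing the threshold statement for essential classes --- is routine, but the half-open conventions and especially the directions of the various reflections need care; checking the final $\psi_O,\psi_R,\psi_E$ against the height function on $S^1$ (where all three can be computed by hand) is a good safeguard.
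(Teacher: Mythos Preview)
Your approach is essentially the same as the paper's: first establish the self-duality $\Ord_k(M,f)\leftrightarrow\Rel_{d-k}(M,f)$ and $\Ess_k(M,f)\leftrightarrow\Ess_{d-k}(M,f)$ via Lefschetz duality on the sublevel sets $M_t$ together with excision (this is exactly the ``duality theorem'' of \cite{cohen2009extending} that the paper invokes), then compose with Corollary~\ref{cor:duality} to pass from $f$ to $-f$. Your treatment is in fact more careful than the paper's on two points---you make the naturality in $a$ explicit, and you supply an actual argument for the essential-class bijection $\Ess_{d-k}(M,f)\leftrightarrow\Ess_{d-k}(M,-f)$, which the paper simply asserts even though Corollary~\ref{cor:duality} only covers finite-lifetime classes.
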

\begin{remark}
We note that \cite{cohen2009extending} has a typographical error in the dimensions for the relative homology classes.
\end{remark} 
\begin{proof}
As in \cite{cohen2009extending}, first use Lefschetz duality $H_k( X, \partial X) \leftrightarrow H_{d-k}(X, \emptyset) $ with $X = M_t$ and the excision theorem to see that 
$$  H_k( M, M^t) = H_k( M_t, \partial M_t) = H_{d-k} (M_t, \emptyset). $$ 
Combined with the inclusion-induced maps on homology, this gives a bijection between the finite intervals of ordinary and relative homology in complementary dimensions:  $\Rel_{k}(M, f) \leftrightarrow \Ord_{d-k} (M,f)$, 
with $\mathcal{I}_{[(b,\rel), (d, \rel))} \mapsto \mathcal{I}_{[(d,\ord), (b, \ord))}$.
The same relationship holds for the essential homology classes: $\Ess_k(M,f) \leftrightarrow \Ess_{d-k}(M,f)$,  
with $\mathcal{I}_{[(b,\ord), (d, \rel))} \mapsto \mathcal{I}_{[(d, \ord), (b, \rel))}$.
Note these bijections are those established by the duality theorem of \cite{cohen2009extending}. 
Combined with the duality result~\ref{cor:duality} above, we now see that 
\begin{align*}
	\Rel_{k}(M, f) &\leftrightarrow \Ord_{d-k} (M,f) \leftrightarrow \Rel_{d-k+1} (M, -f) \\
	\Ess_k(M,f) &\leftrightarrow \Ess_{d-k}(M,f)  \leftrightarrow \Ess_{d-k}(M,-f) \\
	\Ord_{k}(M, f) &\leftrightarrow \Rel_{d-k} (M,f) \leftrightarrow \Ord_{d-k-1} (M, -f) 
\end{align*}
Composing the two bijections establishes the maps $\psi_{\bullet}$ in each case. 
\end{proof}

\begin{remark}
Our application to binary images has data $M$ that are manifolds with boundary, so the duality and symmetry theorems of \cite{cohen2009extending} do not apply directly. We will use the duality result of \cite{de2011dualities} to reduce the number of directions required when computing the extended persistent homology transform, since it gives a bijection between the intervals for height filtrations in opposite directions. Since the boundary $\partial M$ of a manifold with boundary $(M, \partial M)$ is a manifold we will be able to use the symmetry result to characterise the essential classes in $\Ess_0(\partial M,f)$ and $\Ess_{n-1}(\partial M,f)$.
\end{remark}

\section{Wasserstein distance between extended persistence modules}
\label{sec:wasserstein}  
\subsection{Wasserstein distances between persistence modules}

There are many possible metrics between persistence modules, and various representations of them. In this paper we restrict our attention to Wasserstein distances. 
Wasserstein distances between persistence modules are usually defined in terms of the points in their corresponding persistence diagrams. 
However, given our desire to study extended persistence, we rephrase the definitions here in terms of persistence modules over a totally ordered set. 
Wasserstein distances are a form of optimal transport metric. 
A transportation plan between two persistence modules matches subsets of intervals from each,   
with the remaining unmatched intervals paired with an ephemeral interval. 
Since every persistence module considered in this paper is isomorphic to a direct sum of interval modules it is sufficient to define our transportation plans between persistence modules written in this form.

\begin{definition}
Let $\Theta$ be a totally ordered set and $\P=\bigoplus_{I_i\in S^{\mathcal{P}}} \mathcal{I}_{I_i}$ and $\mathcal{Q}=\bigoplus_{I_j\in S^{\mathcal{Q}}}\mathcal{I}_{I_j}$ persistence modules over $\Theta$. A \emph{transportation plan} between $\P$ and $\mathcal{Q}$ is a triple $T=(\hat{S}^\mathcal{P},\hat{S}^\mathcal{Q},\rho)$ where $\hat{S}^\mathcal{P}\subset S^\mathcal{P}$, $\hat{S}^\mathcal{Q}\subset S^\mathcal{Q}$ and $\rho:\hat{S}^\mathcal{P} \to \hat{S}^\mathcal{Q}$ is a bijection. We call the intervals in $\hat{S}^\mathcal{P}$ and in $\hat{S}^\mathcal{Q}$ \emph{matched} intervals in $T$, and we call the intervals in $S^\mathcal{P}\backslash \hat{S}^\mathcal{P}$ and in $S^\mathcal{P}\backslash \hat{S}^\mathcal{Q}$ \emph{unmatched} intervals in $T$.
\end{definition}

 Each transportation plan has an associated cost, constructed analogously to an $L^p$ function metric.  
This in turn depends on the metric used to measure distance between points in $\Theta$, which we define below. 

\begin{definition}
We call $(\Theta, \leq, \d)$ a \emph{totally ordered metric space} if $(\Theta, \leq)$ is a totally ordered set, and $\dist$ is an extended metric over $\Theta$ such that $\d(\beta,\gamma)\leq \d (\alpha,\gamma)$ and $\d(\alpha,\beta)\leq \d(\alpha,\gamma)$ whenever $\alpha\leq \beta\leq \gamma$.  
\end{definition}

From the metric on $\Theta$ we obtain a $p$-distance between intervals over $\Theta$, analogous to the $l^p$ distance between points in $\R^2$. Given two intervals $\mathcal{I}$ and $\mathcal{I}'$, the $p$-distance (for $p\in [1, \infty)$) is defined as 
$$\dist_p(\mathcal{I},\mathcal{I}')= \big(\d(\mf{b}(\mathcal{I}),\mf{b}(\mathcal{I}'))^p+ \d(\mf{d}(\mathcal{I}),\mf{d}(\mathcal{I}'))^p \big) ^{1/p}.$$ 
The bottleneck, or $\infty$-distance, between intervals is $$\dist_\infty(\mathcal{I},\mathcal{I}')= \max\{\d(\mf{b}(\mathcal{I}),\mf{b}(\mathcal{I}')), \d(\mf{d}(\mathcal{I}),\mf{d}(\mathcal{I}'))\}.$$ 

Note that for general interval modules this is actually a pseudo-distance as it cannot distinguish between intervals with open or closed endpoints. 
However, if the persistence modules are constructed from filtrations involving closed sublevel and superlevel sets then the intervals are always half-open, including the birth parameter and not including the death parameter. When restricted to such half-open interval modules the above definition of $\dist_p$ will satisfy the identity of indiscernibles, making it an actual distance. Throughout this paper we will work exclusively with persistence modules that have these half-open intervals.

The final ingredient we need before defining the transportation plans and their costs is the notion of an ``empty interval''. 
For persistence diagrams these are points on the diagonal, corresponding to intervals of zero length in the usual setting of persistence modules over $\R$.  
In the general definition of Wasserstein distance we are allowed to fix any subset of interval modules to perform this role. 
We call this set the \emph{ephemeral intervals} denoted $\Eph$.  
This name is inspired by the definition of an ephemeral persistence module as one with distance zero to the trivial persistence module (see \cite{chazal2014observable}).

We now define the cost of a transportation plan using $p$-distances between intervals where the unmatched intervals of a plan are costed by their distance to the set of ephemeral intervals.  
\begin{definition}
Let $\Theta$ be a totally ordered set; $\mathcal{P}=\bigoplus_{a\in S^\mathcal{P}} \mathcal{I}_a$ and $\mathcal{Q}=\bigoplus_{b\in S^\mathcal{Q}}\mathcal{I}_b$ be persistence modules over the ordered metric space $(\Theta, \leq, \d)$. 
Let $\Eph$ denote the set ephemeral intervals over $\Theta$. 
Let $T=(\hat{S}^\mathcal{P}, \hat{S}^\mathcal{Q},\rho)$ be a transportation plan between $\mathcal{P}$ and $\mathcal{Q}$.
For $p\in [1,\infty)$ we define the $p$-cost of $T$ by 
\begin{align*}
c_p(T)^p =& \sum_{a\in \hat{S}^\mathcal{P}} \d_p(\mathcal{I}_a, \mathcal{I}_{\rho(a)})^p +\sum_{a\in S^\mathcal{P}\backslash \hat{S}^\mathcal{P}} \inf_{\mathcal{I}\in \Eph} \{\d_p(\mathcal{I}_a,\mathcal{I})^p\}\\
&  + \sum_{b\in S^\mathcal{Q}\backslash \hat{S}^\mathcal{Q}}  \inf_{\mathcal{I}\in \Eph} \{\d_p(\mathcal{I}_b,\mathcal{I})^p\}
\end{align*}
and 
\begin{align*}
c_\infty(T) = \max\Big\{ &\sup_{a\in\hat{S}^\mathcal{P}} \{\d_\infty(\mathcal{I}_a,\mathcal{I}_{\rho(a)})\},
\sup_{a\in S^\mathcal{P}\backslash \hat{S}^\mathcal{P}} \{ \inf_{\mathcal{I}\in \Eph} \{\d_\infty(\mathcal{I}_a,\mathcal{I})\}\}, \\
 &\sup_{b\in S^\mathcal{Q}\backslash \hat{S}^\mathcal{Q}} \{ \inf_{\mathcal{I} \in \Eph} \{\d_\infty(\mathcal{I}_b,\mathcal{I})\}\}\Big\}
\end{align*}
\end{definition}

Observe that $c_\infty(T)$ is the limit of $c_p(T)$ as $p$ goes to infinity. 
The Wasserstein distance is defined as the infimum of the costs of all transportation plans. 
Note that there is always at least one possible transportation plan as we can choose $\hat{S}^\mathcal{P}$ and $\hat{S}^\mathcal{Q}$ to be empty.

\begin{definition}
Fix $p\in [1,\infty)$. Let $\Theta$ be a totally ordered set and $\P=\bigoplus_{a\in S^\mathcal{P}} \mathcal{I}_a$ and $\mathcal{Q}=\bigoplus_{b\in S^\mathcal{Q}}\mathcal{I}_b$ be persistence modules over the ordered metric space $(\Theta, \leq, \d)$. 
The \emph{$p$-Wasserstein} distance between $\P$ and $\mathcal{Q}$ is
$$W_p(\mathcal{P}, \mathcal{Q})=\inf \{c_p(T)  \;\vert\; T \text{ a transportation plan between } \mathcal{P} \text{ and } \mathcal{Q}\}.$$
The \emph{bottleneck} distance between $\mathcal{P}$ and $\mathcal{Q}$ is
$$W_\infty(\mathcal{P}, \mathcal{Q})=\inf \{c_\infty(T) \;\vert\; T \text{ a transportation plan between } \mathcal{P} \text{ and } \mathcal{Q}\}.$$
\end{definition}

This definition agrees with the standard definitions of Wasserstein and  bottleneck distances between persistence diagrams when $\Theta$ is the real line with its standard order,  $\d(s,t)=|s-t|$, and $\Eph=\{[t,t]: t\in \R\}$. 
More generally, for any totally ordered metric space and any choice for the set of ephemeral intervals, the Wasserstein distance defined above will determine an extended metric. 
Again, for general persistence modules this will be, strictly speaking, a pseudo-distance. 
But, as discussed earlier, in this paper the persistence modules will only contain appropriate half-open intervals and $W_p(\mathcal{P},\mathcal{Q})$ satisfies the identity of indiscernibles.

\subsection{Wasserstein distance for extended persistence} % modules}

The Wasserstein distance between persistence modules is specified by the ordered metric space and set of ephemeral interval modules. Recall  from Section~\ref{ssec:ext_pers}  that extended persistent modules have parameter set $\Theta=O\cup R$, with $O= \{(t,\text{Ord}): t\in \R\} $ and $R= \{(t,\text{Rel}): t\in \R\}$,  and the total order over $P$ is
\begin{align*}
(s,\text{Ord})&<(t,\text{Ord}) \text{ when }s<t\\
(s,\text{Rel})&<(t,\text{Rel})  \text{ when }s>t\\
(s,\text{Ord})&<(t,\text{Rel}) \text{ for all }s,t.
\end{align*}
We make $\Theta$ an ordered metric space by constructing an appropriate extended metric over $\Theta$. 
A natural choice is
\begin{align*}
\dist((s,\text{Ord}),(t,\text{Ord}))&=|s-t|  \text{ for all }s,t.\\
\dist((s,\text{Rel}),(t,\text{Rel}))&=|s-t|  \text{ for all }s,t.\\ 
\dist((s,\text{Ord})(t,\text{Rel}))&=\infty \text{ for all }s,t.
\end{align*}

We also need to define the set of ephemeral interval modules; there are three different types: ordinary, relative and essential. 
We set 
\begin{align*}
\Eph=&\{\mathcal{I}_{\left[(t, \Ord), (t,\Ord)\right)}: t\in \R\} \cup \{\mathcal{I}_{\left[(t, \Rel), (t, \Rel)\right)}:t\in \R\}\\
& \cup \{\mathcal{I}: \mf{b}(\mathcal{I})=(t, \Ord) \text{ and } \mf{d}(\mathcal{I})=(t, \Rel) \text{ for some }t\in \R\}.
\end{align*}

For computational purposes it is much easier to split the calculation of distances between extended persistence modules into separate calculations for the submodules of the types $\Ord$, $\Rel$, $\Ess^+$ and $\Ess^-$. This is justified by the following proposition.

\begin{proposition}\label{prop:sumtypes}
Let $\mathcal{P}$ and $\mathcal{Q}$ be extended persistence modules in a single homology dimension and let $\mathcal{P}=\Ord(\mathcal{P})\oplus \Rel(\mathcal{P})\oplus\Ess^+(\mathcal{P}) \oplus\Ess^-(\mathcal{P})$ and $\mathcal{Q}=\Ord(\mathcal{Q})\oplus \Rel(\mathcal{Q})\oplus\Ess^+(\mathcal{Q}) \oplus\Ess^-(\mathcal{Q})$ be their decomposition into the four types of classes. Then 
\begin{align*}
W_p(\mathcal{P}, \mathcal{Q})^p=& W_p(\Ord(\mathcal{P}), \Ord(\mathcal{Q}))^p +W_p(\Rel(\mathcal{P}), \Rel(\mathcal{Q}))^p\\
&+W_p(\Ess^-(\mathcal{P}), \Ess^-(\mathcal{Q}))^p+W_p(\Ess^+(\mathcal{P}), \Ess^+(\mathcal{Q}))^p
\end{align*}
 for $p\in [1,\infty)$ and 
\begin{align*}
W_\infty(\mathcal{P}, \mathcal{Q})= \max\Big\{ &W_\infty(\Ord(\mathcal{P}), \Ord(\mathcal{Q})),W_\infty(\Rel(\mathcal{P}), \Rel(\mathcal{Q})),\\
&W_\infty(\Ess^-(\mathcal{P}), \Ess^-(\mathcal{Q})),W_\infty(\Ess^+(\mathcal{P}), \Ess^+(\mathcal{Q}))\Big\}.
\end{align*}
\end{proposition}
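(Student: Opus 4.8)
The plan is to exploit the fact that both the metric on $\Theta$ and the ephemeral set $\Eph$ respect the trichotomy ordinary/relative/essential, reducing immediately to a per-type computation, and then to handle the finer $\Ess^+/\Ess^-$ split by a short optimality estimate.

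First I would note that $W_p(\mathcal{P},\mathcal{Q})$ is finite: the empty transportation plan has cost the (finite) sum of the distances of each interval to an ephemeral interval of its own type, each such distance being finite because $\Eph$ contains ordinary, relative and essential ephemerals. So it suffices to take the infimum over finite-cost plans $T$. For such a $T$, if $(\mathcal{I}_a,\mathcal{I}_{\rho(a)})$ is a matched pair then $\dist_p(\mathcal{I}_a,\mathcal{I}_{\rho(a)})<\infty$, and since $\dist((s,\Ord),(t,\Rel))=\infty$ for all $s,t$ this forces the births of $\mathcal{I}_a$ and $\mathcal{I}_{\rho(a)}$ to lie both in $O$ or both in $R$, and likewise the deaths; hence $\mathcal{I}_a$ and $\mathcal{I}_{\rho(a)}$ are of the same type. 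Similarly the infimum in the unmatched contribution of any interval is realised by an ephemeral interval of that interval's type. Since $c_p(T)^p$ is a sum over matched pairs and unmatched intervals, partitioning these by type gives $c_p(T)^p = c_p(T|_{\Ord})^p + c_p(T|_{\Rel})^p + c_p(T|_{\Ess})^p$ for the induced plans $T|_\bullet$ on the type-submodules, and analogously $c_\infty(T)=\max\{c_\infty(T|_{\Ord}),c_\infty(T|_{\Rel}),c_\infty(T|_{\Ess})\}$. Conversely any independent choice of plans on the three type-submodules assembles into a plan on $\mathcal{P}$ versus $\mathcal{Q}$, so taking infima over these independent choices yields
\begin{align*}
W_p(\mathcal{P},\mathcal{Q})^p = W_p(\Ord(\mathcal{P}),\Ord(\mathcal{Q}))^p + W_p(\Rel(\mathcal{P}),\Rel(\mathcal{Q}))^p + W_p(\Ess(\mathcal{P}),\Ess(\mathcal{Q}))^p,
\end{align*}
and the same identity with $\max$ in place of $+$ when $p=\infty$.

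It then remains to split the essential term into its $\Ess^+$ and $\Ess^-$ parts. Unlike the trichotomy above, a match between a positive and a negative essential class is \emph{not} infinitely costly, so this step needs the elementary inequality: for a positive essential interval $\mathcal{I}$ (birth/death real values $s_1<t_1$) and a negative essential interval $\mathcal{I}'$ (birth/death real values $s_2>t_2$),
\begin{align*}
\dist_p(\mathcal{I},\mathcal{I}')^p \;\ge\; \inf_{\mathcal{E}\in\Eph}\dist_p(\mathcal{I},\mathcal{E})^p + \inf_{\mathcal{E}\in\Eph}\dist_p(\mathcal{I}',\mathcal{E})^p ,
\end{align*}
with the analogue for $p=\infty$. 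Here the right side equals $2^{1-p}\big((t_1-s_1)^p+(s_2-t_2)^p\big)$, since the nearest essential ephemeral to an essential interval sits at the midpoint of its birth and death real values; and writing $A=t_1-s_1$, $B=s_2-t_2$, the left side is at least $2^{1-p}(A+B)^p$ because $(t_1-t_2)-(s_1-s_2)=A+B$ and $x\mapsto|x|^p$ is convex. The inequality then follows from $(A+B)^p\ge A^p+B^p$ for $p\ge1$ and $A,B\ge0$; the $p=\infty$ case is the same computation with maxima. Given this, in $W_p(\Ess(\mathcal{P}),\Ess(\mathcal{Q}))^p=\inf_T c_p(T)^p$ I would take a finite-cost plan $T$ on the essential submodules and leave unmatched every interval whose $\rho$-partner has the opposite essential sign; the displayed inequality shows this does not increase $c_p(T)^p$ (resp.\ $c_\infty(T)$), and since there are only finitely many intervals the result is a plan all of whose matched pairs are $+/+$ or $-/-$, hence a disjoint union of a plan on $\Ess^+(\mathcal{P})$ versus $\Ess^+(\mathcal{Q})$ and one on $\Ess^-(\mathcal{P})$ versus $\Ess^-(\mathcal{Q})$ with additively decomposing cost (resp.\ $\max$). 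The now-familiar infimum argument gives $W_p(\Ess(\mathcal{P}),\Ess(\mathcal{Q}))^p=W_p(\Ess^+(\mathcal{P}),\Ess^+(\mathcal{Q}))^p+W_p(\Ess^-(\mathcal{P}),\Ess^-(\mathcal{Q}))^p$, and substituting into the trichotomy identity proves the proposition; $p=\infty$ is identical with sums replaced by maxima.

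The main obstacle is precisely this last separation of $\Ess^+$ from $\Ess^-$: the Ord/Rel/Ess trichotomy is automatic because cross-type distances are infinite, whereas the sign split is a genuine optimality statement — one can always match positive with negative essential classes at finite cost, so the argument must show that doing so is never advantageous. Establishing the midpoint/convexity inequality uniformly for all $p\in[1,\infty]$ is where the real content sits; everything else is routine bookkeeping with the definitions of transportation plan and cost.
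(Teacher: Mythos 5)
Your proposal is correct and follows essentially the same route as the paper: reduce to plans that never match across types, using that Ord/Rel/Ess cross-matches are infinitely costly and that a $+/-$ essential match is never cheaper than unmatching both. The only difference is that you actually prove the midpoint/convexity inequality $2^{1-p}(A+B)^p \ge 2^{1-p}(A^p+B^p)$ underlying the $\Ess^+/\Ess^-$ separation, which the paper's proof merely asserts, so your write-up is a more complete version of the same argument.
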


\begin{proof}
The right hand side of the both equations is the infimum of transportation costs over the set of transportation plans which never match any intervals of different types. 
It is thus sufficient to show that for any transportation plan between $\mathcal{P}$ and $\mathcal{Q}$ there is another transportation $T$ with the same or lesser cost such that any matched pair within $T$ keeps to the same type. Any two intervals of different types of $\Rel$, $\Ord$ or $\Ess$ are an infinite distance apart. Since every interval module has finite distance to some ephemeral interval it will always be more efficient to change any interval  that is matched to a different type to instead be unmatched. 
Similarly there is a  higher cost to match positive with negative essential classes than to leave both unmatched. \end{proof}

It is worth observing that in previous work, such as  \cite{carlsson2019parametrized,bauer2020universality}, the extended persistent homology modules are represented by multiple persistence diagrams, separating the different types into their own persistence diagrams. The ordinary persistence diagram has points above the diagonal, the relative persistence diagram has points only below the diagonal, and the essential persistence diagram has points on both sides --- positive above and negative below. 
The bottleneck distance in~\cite{bauer2020universality} is then defined as the formula within Proposition \ref{prop:sumtypes}. 

\begin{remark}
We believe that the Wasserstein distance could also be defined analogous to the algebraic Wasserstein distance in \cite{skraba2020wasserstein} but adapted to extended persistence, and that these two versions of Wasserstein distances would be equivalent. Given the enormous homological algebra set up required to prove such a result it is beyond the scope of this paper and left as a future direction of research.
\end{remark}

\section{Morse theory for manifolds with boundary and extended persistence}
\label{sec:morse} 
This section contains the main theoretical results relating extended persistence of a height function over a manifold with boundary to that of the same function restricted to the boundary. 
We establish these results using Morse theory, a standard technique when working with persistence modules built from sublevel set filtrations.  
Previous results, however, apply only to functions on manifolds not to those with boundary.
The presence of a boundary requires extra analysis to characterise critical points located on this boundary. 
We start by summarising the necessary definitions and results from Morse theory covering both the smooth and piecewise-linear settings.

\subsection{Background: Smooth and PL Morse theory}
\label{sec:MorseSetup}

We need our results about extended persistent homology to hold for both the smooth (theoretical) case, and the piecewise-linear setting relevant to numerical computations.
Most of the theorems and their proofs are effectively the same but we must first set up the definitions and relevant lemmas about critical points.  
The background theory is covered for the smooth case in \cite{braess1974morse, jankowski1972functions}, and the piecewise linear case in \cite{grunert2019pl}. We direct readers interested in more details to these references.   

Although regular and critical points and their indices in Morse theory are more commonly defined in terms of the derivatives and  Hessian of a function, this approach does not translate well to the PL setting. 
There is, however, an equivalent approach to defining critical points and indices that uses polynomial functions over charts, and this can be easily adapted to the PL setting. 
To make this paper self-contained we start by recalling the definitions of smooth and PL manifold (with or without boundary) in terms of charts.

\begin{definition}
For a topological space, $M$, and an open subset $U \subset M$, a \emph{chart} is a homeomorphism $\phi:U\to \phi(U)$ where $\phi(U)$ is a subset of Euclidean space. 
An \emph{atlas} for $M$ is an indexed family of charts $\{(U_\alpha, \phi_\alpha\})$  that cover $M$, i.e., $\cup U_\alpha = M$. 
A \emph{topological $n$-manifold} is a second countable, Hausdorff space equipped with an atlas where the codomain of each $\phi_{\alpha}$ is an open subset of $\R^n$.   
A \emph{topological $n$-manifold with boundary} is a second countable, Hausdorff space equipped with an atlas where the codomain of each $\phi_{\alpha}$ is an open subset of $\left[0,\infty\right) \times\R^{n-1}$.\end{definition}

To introduce the adjectives smooth and piecewise linear (PL) we need to discuss the compatibility of $\phi_\alpha$ and $\phi_\beta$ on the intersections of their domains. 
Given two charts $(U_\alpha, \phi_\alpha)$ and $(U_\beta, \phi_\beta)$ where $U_\alpha \cap U_\beta$  has non-empty intersection we can define two different maps by restricting the domains of $\phi_\alpha$ and $\phi_\beta$ to $U_\alpha \cap U_\beta$.
The new homeomorphisms are $\phi_\alpha \circ (\phi_\beta)^{-1}: \phi_{\beta}(U_\alpha \cap U_\beta) \to \phi_\alpha (U_\alpha \cap U_\beta)$ and $\phi_\beta \circ (\phi_\alpha)^{-1}: \phi_{\alpha}(U_\alpha \cap U_\beta) \to \phi_\beta (U_\alpha \cap U_\beta)$. These are called the \emph{transition maps} between charts. 

\begin{definition}
A topological $n$-manifold, with or without boundary, is called \emph{smooth} if its transition maps are smooth. It is called \emph{piecewise-linear} (PL for short) if its transition maps are piecewise-linear.
\end{definition}

We say that $\{(U_\alpha, \phi_\alpha)\}$ is \emph{maximal} if there does not exist another atlas containing it with more charts. 
A maximal atlas is often referred to as the smooth structure, or respectively, the PL structure of a manifold. 
Once we have a smooth (or PL) structure we can define what it means for a function $f:M \to \R$ to be smooth or piecewise linear. 

\begin{definition}
Let $M$ be a smooth $n$-manifold, with or without boundary, with smooth (respectively PL) structure $\{(U_\alpha, \phi_\alpha)\}$. A function $f:M\to \R$ is \emph{smooth} (respectively \emph{PL}) if $\phi_\alpha^{-1}\circ f: \phi_\alpha(U_\alpha)\to \R$ is smooth (respectively \emph{PL}) for all charts $(U_\alpha, \phi_\alpha)$.
\end{definition}

An example to keep in mind is $M$ being a smooth or piecewise linear $n$-dimensional subset of $\R^d$ with its structure inherited from the embedding.  
A simple function on such a manifold is the height function with respect to some unit vector $v\in S^{d-1}$, i.e., $f(x)=v\cdot x$.

The classical approach to defining critical points in Morse theory is as follows. For a manifold $M$ without boundary and a smooth function $f:M \to \R$. Let $p\in M$ and choose a chart $(U, \phi)$ with $p\in U$. We say that $p\in M$ is a \emph{critical point} of $f$ if $d(f\circ{\phi}^{-1})(\phi(p))=0$.  A critical point is \emph{non-degenerate} if the Hessian of $f\circ \phi$ at $p$ is non-singular. We then say the \emph{Morse index of f at $p$} is the number of negative eigenvalues of the Hessian, counting multiplicity. A point is \emph{regular} if it is not critical. These definitions are well defined as they do not depend on the choice of chart (see \cite{Milnor}).

Instead of using definitions for critical and regular points in terms of the derivative, we need an alternative that will be more adaptable to the PL setting. 
By using the implicit function theorem we can redefine regular points by the existence of a linear function over some chart.
We can also remove the need to reference the Hessian for defining the index of a critical point by using the Morse Lemma.

\begin{lemma}[Morse Lemma]
Let $M$ be a smooth $n$-manifold without boundary and  $f:M \to \R$ a smooth function. The point $p\in M$ is a regular point of $f$ if and only if there is a chart $(U, \phi)$ where $\phi(p)=0$ and
$$f\circ \phi^{-1}(x_1, x_2, \ldots x_n)=f(p)+x_n$$
in some neighbourhood of $0$.

The point $p\in M$ is a non-degenerate critical point of $f$ with Morse index $k$ if and only if there is a chart $(U, \phi)$ where $\phi(p)=0$ and $$f\circ \phi^{-1}(x_1, x_2, \ldots x_n)=f(p)-(x_1)^2 - (x_2)^2 -\ldots -(x_k)^2 +(x_{k+1})^2 +\ldots + (x_n)^2$$ in a neighbourhood of $0$. 
\end{lemma}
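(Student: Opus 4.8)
The plan is to prove both directions of each equivalence, using the chart-independent characterisations of regular/critical points, non-degeneracy, and Morse index recalled above (established in \cite{Milnor}), together with two analytic tools: the inverse function theorem, and Hadamard's lemma (the integral representation $g(x)=\sum_i x_i\int_0^1 \partial_i g(tx)\,dt$, valid on a star-shaped neighbourhood of $0$ for smooth $g$).

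First, the regular case. For the ``if'' direction, if such a chart $(U,\phi)$ exists then $d(f\circ\phi^{-1})(0)=dx_n\neq 0$, so $p$ is not critical in this chart, hence regular by chart-independence. For the ``only if'' direction, start from any chart $(V,\psi)$ with $\psi(p)=0$ and set $g=f\circ\psi^{-1}$; since $dg(0)\neq 0$, after permuting coordinates we may assume $\partial g/\partial y_n(0)\neq 0$. Define $\Phi(y)=(y_1,\dots,y_{n-1},\,g(y)-f(p))$; its Jacobian at $0$ is block lower-triangular with nonzero $(n,n)$-entry, so by the inverse function theorem $\Phi$ is a diffeomorphism on a neighbourhood of $0$, and $\phi:=\Phi\circ\psi$ is the required chart.

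Second, the non-degenerate critical case. The ``if'' direction is immediate: in such a chart the differential at $0$ vanishes and the Hessian is $\operatorname{diag}(-2,\dots,-2,2,\dots,2)$ with exactly $k$ negative entries, so $p$ is a non-degenerate critical point of Morse index $k$. For the ``only if'' direction I would run the classical Morse Lemma argument: choose a chart centred at $p$, normalise $f(p)$ to $0$, shrink to a star-shaped neighbourhood, and apply Hadamard's lemma twice to write $f(x)=\sum_{i,j}x_ix_j h_{ij}(x)$ with $h_{ij}$ smooth and, after symmetrising, $h_{ij}(0)=\tfrac12\partial_i\partial_j f(0)$, so $(h_{ij}(0))$ is a nonzero multiple of the non-singular Hessian. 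Then diagonalise by induction on $r$: assuming $f=\sum_{i<r}\pm v_i^2+\sum_{i,j\geq r}v_iv_jH_{ij}(v)$ with $H$ smooth and symmetric, a preliminary linear change in $v_r,\dots,v_n$ arranges $H_{rr}(0)\neq 0$ (the lower-right block $(H_{ij}(0))_{i,j\geq r}$ is non-singular since the full Hessian is block-diagonal and non-singular, hence not the zero matrix, so if no diagonal entry is nonzero one first rotates in a plane where an off-diagonal entry is nonzero); then complete the square with $w_r=\sqrt{|H_{rr}(v)|}\bigl(v_r+\sum_{i>r}v_iH_{ir}(v)/H_{rr}(v)\bigr)$ and $w_i=v_i$ otherwise, checking that the differential at $0$ is invertible so this is a legitimate chart change near $0$; the new expression has the same form with $r$ replaced by $r+1$. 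After $n$ steps $f=\sum_i\pm w_i^2$; since the Morse index is chart-independent and equals the number of negative eigenvalues of the diagonal Hessian in these coordinates, there are exactly $k$ minus signs, which a final permutation of coordinates moves to the front, and adding back $f(p)$ gives the claimed normal form.

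The main obstacle is the inductive diagonalisation step: one must track that the remainders $H_{ij}$ stay smooth and symmetric, verify at each stage that the square-completing substitution has invertible differential at $0$ so the inverse function theorem produces a genuine chart, and justify the preliminary linear adjustment ensuring $H_{rr}(0)\neq 0$. The difficulty is bookkeeping rather than any single deep idea; everything else reduces to the inverse function theorem and Hadamard's lemma.
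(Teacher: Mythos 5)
Your argument is correct and is precisely the classical proof (inverse function theorem for the regular case, Hadamard's lemma plus inductive square-completion for the critical case) found in Milnor's \emph{Morse Theory}, which is exactly the reference the paper defers to — the paper itself gives no proof of this lemma beyond that citation. Nothing further is needed, though for completeness you could note that the count of minus signs being exactly $k$ follows from Sylvester's law of inertia applied to the (chart-independent) Hessian signature.
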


The proof of this lemma is covered in \cite{Milnor}. We use it as an equivalent definition of a regular point and a non-degenerate critical point of Morse index $k$. 
In the piecewise linear setting  the only modification is to replace squares with absolute values.

\begin{definition}
Let $M$ be an  $n$-dimensional  PL manifold without boundary and  $f:M \to \R$ a PL function. 
The point $p\in M$ is a \emph{regular} point of $f$ if and only if there is a chart $(U, \phi)$  containing $p$ of the form $$f\circ \phi^{-1}(x_1, x_2, \ldots x_n)=f(p)+x_n.$$
The point $p\in M$ is a non-degenerate critical point of $f$ with Morse index $k$ if and only if there is a chart $(U, \phi)$ with $\phi(p)$ which is of the form 
$$f\circ \phi^{-1}(x_1, x_2, \ldots x_n)=f(p)-|x_1| - |x_2| -\ldots -  |x_k| +|x_{k+1}| +\ldots + |x_n|$$ in a neighbourhood of $0$. 
\end{definition}

We now need to generalise the definitions of regular and critical points to the case of a function over a manifold with boundary $(M, \partial M)$.
Points in the interior of $M$ are treated exactly as above, so we need only discuss the case for points on the boundary. 
We again phrase the definitions using charts to make it easy to move between smooth and PL settings, following the terminology and notation in \cite{grunert2019pl}. 
Recall that a chart containing a point, $p \in \partial M$ is homeomorphic to a subset of 
$\{(x_1, x_2, \ldots, x_n)\in \R^n \;\vert\; x_1 \geq 0\}$, with $\phi(p) = (0, x_2, \ldots, x_n)$. 

\begin{definition}
Let $(M, \partial M)$ be a smooth (respectively PL) $n$-manifold with boundary and  $f:M \to \R$ a smooth (respectively PL) function. The point $p\in \partial M$ is a \emph{regular} point of $f$ if and only if there is a chart $(U, \phi)$ with $\phi(p)=0$ of the form $f\circ \phi^{-1}(x_1, x_2, \ldots x_n)=f(p)+x_n$. 
\end{definition}

A point on the boundary is critical if it is critical for $f$ restricted to $\partial M$, but the definition of its index requires additional information about whether the function increases or decreases as we move into the manifold.

\begin{definition}
Let $(M, \partial M)$ be a smooth $n$-manifold with boundary and $f:M \to \R$ a smooth function. 
The point $p\in \partial M$ is a \emph{non-degenerate critical} point of $f$ with index $(k,\eta)$ if only if there is a chart $(U, \phi)$ with $\phi(p)=0$ such that 
$$f\circ \phi^{-1}(x_1, x_2, \ldots x_n)=f(p) + \eta x_1 - (x_2)^2 -\ldots -(x_{k+1})^2 +(x_{k+2})^2 +\ldots + (x_{n})^2.$$
The second term of the index, $\eta \in \{-1, 1\}$,  defines the sign of the critical point: 
if $\eta = 1$ we say that $p$ is $(+)$-critical, and if $\eta = -1$, then $p$ is $(-)$-critical. 
\end{definition} 

The analogous definition for the piecewise linear case is: 
\begin{definition}
Let $(M, \partial M)$ be a PL $n$-manifold with boundary and  $f:M \to \R$ a PL function. The point $p\in \partial M$ is a \emph{non-degenerate critical} point of $f$ of index $(k,\eta)$ if there is a chart $(U, \phi)$ with $\phi(p)=0$ of the form 
$$f\circ \phi^{-1}(x_1, x_2, \ldots x_n)=f(p)+\eta x_1 - |x_2| -\ldots -|x_{k+1}| +|x_{k+2}| +\ldots + |x_{n}|.$$
Again, $p$ is $(+)$-critical when $\eta=+1$ and $(-)$-critical when $\eta=-1$.
\end{definition}

Please note that there is inconsistency within the literature in terms of sign conventions for critical points on the boundary and our choice may differ from sources the reader is familiar with.

Now we have the definitions for all the different types of critical point, we can define what a Morse function is for both the smooth and PL settings.

\begin{definition}
Given a smooth (respectively PL) manifold with boundary $(M, \partial M)$, we say that $f:M\to \R$ is a \emph{Morse function} if
\begin{itemize}
\item $f$ is smooth (respectively PL)
\item None of the critical points of $f \vert_{\Int(M)}$ and $f \vert_{\partial M}$ are degenerate. 
\item All the critical values for $f|_{\Int(M)}$ and $f|_{\partial M} $ combined are distinct and finite in number.
\end{itemize}
\end{definition}

In the following we describe the (persistent) homology in terms of the signs of critical points so it is useful to have notation for this. 

\begin{definition}
Suppose $f:(M,\partial M)\to \R$ is a Morse function. Let $\Crit(f, k)$ denote the set of index-$k$ critical points of $f$;  these points must lie in the interior of $M$.  Let $\Crit(f, (k ,\eta))$ denote the set of critical points of $f|_{\partial M}$ with index $(k,\eta)$. If $p \in \partial M$ is a critical point of $f|_{\partial M}$, with index $(k,\eta)$ denote the sign of $p$ by $\sgn (f,p) = \eta$. 
\end{definition}

Highly analogous to the well-known theory of Morse functions on manifolds, we can use the index of critical points to compute the relative homology of nearby sublevel sets of $f$. 

\begin{proposition}\label{prop:strongMorse}
Let $(M, \partial M)$ be a smooth (respectively PL) manifold with boundary and $f: M \to \R$ a smooth (respectively PL) Morse function.  We consider homology with coefficients in a field $\mathbb{F}$, and use Kronecker delta notation $\delta_i^k$ below. 
\begin{itemize}
\item If $t$ is not a critical value of neither $f$ nor $f|_{\partial M}$ then $H_i(M_{t+\epsilon}, M_{t-\epsilon})=0$  for all $i$ and all $\epsilon >0$ sufficiently small.
\item If $p\in \Crit(f, k)$ then $H_i(M_{f(p)+\epsilon},M_{f(p)-\epsilon})=\delta_i^k \, \mathbb{F}$ for all $i$ and for all $\epsilon >0$ sufficiently small. 
\item If $p\in \Crit(f, (k,-1))$ then $H_i(M_{f(p)+\epsilon},M_{f(p)-\epsilon})=0$ for  $\epsilon >0$ sufficiently small.
\item If $p\in \Crit(f, (k,+1))$ then $H_i(M_{f(p)+\epsilon}, M_{f(p)-\epsilon})=\delta_i^k \, \mathbb{F}$  for all $i$, for  $\epsilon >0$ sufficiently small.
\end{itemize}
\end{proposition}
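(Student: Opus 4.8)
The plan is to follow the classical Morse-theoretic template of \cite{Milnor} (Theorems~3.1 and~3.2), adapted to manifolds with boundary as in \cite{braess1974morse, jankowski1972functions} for the smooth case and \cite{grunert2019pl} for the PL case. Each statement is local: since the critical values of $f|_{\Int(M)}$ and of $f|_{\partial M}$ are finitely many and distinct, for $\epsilon>0$ small enough the slab $f^{-1}[f(p)-\epsilon,\,f(p)+\epsilon]$ (respectively $f^{-1}[t-\epsilon,\,t+\epsilon]$ in the first bullet) contains exactly the one critical point $p$ and no other. I would fix a chart $(U,\phi)$ around $p$ in which $f$ has the normal form supplied by the Morse Lemma (interior case) or by the boundary normal forms stated above, and reduce the computation of $H_i(M_{f(p)+\epsilon}, M_{f(p)-\epsilon})$ to a computation inside $U$ via a deformation retraction followed by excision.

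For the \textbf{deformation-retraction step}: away from $p$ the function $f$ has no critical point in the slab, so there is a gradient-like vector field tangent to $\partial M$ (smooth case) --- or, in the PL case, a finite sequence of elementary collapses built from the links of vertices in the slab, following \cite{grunert2019pl} --- along which $M_{f(p)+\epsilon}$ deformation retracts onto $M_{f(p)-\epsilon}\cup N$, where $N=\bar U'\cap M_{f(p)+\epsilon}$ for a suitably small compact neighbourhood $\bar U'\subset U$ of $p$. Excision then gives $H_i(M_{f(p)+\epsilon}, M_{f(p)-\epsilon})\cong H_i\bigl(N,\, N\cap M_{f(p)-\epsilon}\bigr)$. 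For the first bullet the same flow retracts $M_{t+\epsilon}$ all the way onto $M_{t-\epsilon}$, so the relative homology vanishes and that case is done.

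It remains to compute the local pair $\bigl(N,\, N\cap M_{f(p)-\epsilon}\bigr)$ for each type of critical point, directly from the normal form. For an interior critical point of index $k$, the usual computation shows $N$ is, up to homotopy, $N\cap M_{f(p)-\epsilon}$ with one $k$-cell attached along its boundary sphere (the $k$ negative directions of the quadratic or PL normal form), so $H_i\bigl(N, N\cap M_{f(p)-\epsilon}\bigr)\cong H_i(D^k,S^{k-1})=\delta_i^k\,\mathbb{F}$. For a boundary critical point of index $(k,-1)$, the term $-x_1$ means $f$ decreases as one moves off $\partial M$ into $M$; hence every point of $N$ with $f>f(p)-\epsilon$ can be pushed in the direction of increasing $x_1$ until its value drops to $f(p)-\epsilon$, exhibiting a deformation retraction of $N$ onto $N\cap M_{f(p)-\epsilon}$, so the pair is acyclic. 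For a boundary critical point of index $(k,+1)$, the term $+x_1$ makes $f$ increase moving into $M$, so the descending directions are precisely $x_2,\dots,x_{k+1}$ (there are $k$ of them), and $N$ is homotopy equivalent to $N\cap M_{f(p)-\epsilon}$ with a single $k$-cell attached, giving $H_i\bigl(N, N\cap M_{f(p)-\epsilon}\bigr)=\delta_i^k\,\mathbb{F}$ once more. The low-dimensional cases ($n=1$, and $n=2$ with $k\in\{0,1\}$) can be checked directly from the normal forms as a sanity check.

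The \textbf{main obstacle} is making the deformation-retraction step rigorous and uniform across the two settings while respecting the boundary: in the smooth case one must produce a gradient-like field that is tangent to $\partial M$ and control its flow over the whole slab, and in the PL case there is no flow at all, so the retraction must be assembled from PL collapses in the sense of \cite{grunert2019pl}, with care that vertices lying on $\partial M$ are handled correctly. A secondary technical point is justifying the homotopy equivalence $N\simeq (N\cap M_{f(p)-\epsilon})\cup e^k$ in the boundary $(k,+1)$ case, which requires a careful splitting of the half-ball neighbourhood into its ascending and descending parts; once that is established, the homology computations above are immediate.
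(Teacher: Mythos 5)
Your proposal is correct and follows exactly the cell-attachment argument that the paper itself relies on: the paper gives no proof of this proposition but instead cites \cite{braess1974morse, jankowski1972functions} for the smooth case and \cite{grunert2019pl} for the PL case, noting that those sources ``describe the changes in terms of glueing cells,'' which is precisely your retract-excise-and-attach-a-$k$-cell computation, with the $(k,-1)$ case contributing nothing and the $(k,+1)$ case contributing one $k$-cell from the $k$ descending directions $x_2,\dots,x_{k+1}$. The technical obstacles you flag (a gradient-like field tangent to $\partial M$, and PL collapses in place of a flow) are exactly the content outsourced to those references.
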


For the smooth case, this proposition is proved in \cite{braess1974morse} and in \cite{jankowski1972functions}. Please note that in  \cite{jankowski1972functions} they use the term ``m-function'' for Morse function.  Some minor massaging is needed to convert their results to the homology statements above as they describe the changes in terms of glueing cells. 
The PL version of this proposition is proved in \cite{grunert2019pl}. 

We can determine critical points and indices of $(-f)$ from those of $f$ using charts, as summarised in the following lemma which holds for both the smooth and PL settings. 

\begin{lemma}\label{lem:negcrit}
Let $(M,\partial M)$ be an $n$-manifold with boundary and $f:M\to \R$ a Morse function. 
Then $(-f):M \to \R$ is also a Morse function with $\Crit((-f),k)=\Crit(f,n-k)$ and $\Crit((-f), (k,\eta))=\Crit(f, (n-k-1, -\eta))$ for $\eta=\pm 1$. 
\end{lemma}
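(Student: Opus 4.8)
The plan is to prove everything \emph{locally}, by comparing charts. For each point $p\in M$ I would take a chart that realises the type of $p$ for $f$ (via the Morse Lemma and its PL analogue in the interior, and via the defining normal forms on the boundary), and then post-compose it with an invertible linear change of coordinates chosen so that the resulting chart realises the appropriate type for $-f$. The key observation that makes this legitimate is that an invertible linear map of $\R^n$ is both a diffeomorphism and a PL homeomorphism, and if $p\in\partial M$ such a map additionally preserves the model half-space $\{x_1\ge 0\}$ precisely when it fixes the normal coordinate $x_1$ (up to a positive multiple) and only permutes and sign-flips the tangent coordinates $x_2,\dots,x_n$; post-composing a chart of the maximal smooth (resp.\ PL) atlas with such a map is again a chart of that atlas. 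After this reduction the whole argument is bookkeeping of signs and indices, and it is word-for-word identical in the smooth and PL cases once ``square'' is read as ``absolute value'' in the latter.

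Next I would run through the point-types in the interior of $M$. If $f\circ\phi^{-1}(x)=f(p)+x_n$ near $0$, then $(-f)\circ\phi^{-1}(x)=-f(p)-x_n$, and post-composing $\phi$ with the flip $x_n\mapsto -x_n$ produces the regular normal form for $-f$; so in the interior the regular points of $f$ and of $-f$ coincide. If instead $p$ is a non-degenerate critical point of index $k$, so $f\circ\phi^{-1}(x)=f(p)-x_1^2-\dots-x_k^2+x_{k+1}^2+\dots+x_n^2$, then $(-f)\circ\phi^{-1}$ has exactly $n-k$ negative squares, and the linear permutation of coordinates that lists those first yields the index-$(n-k)$ normal form for $-f$. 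This gives $\Crit((-f),k)=\Crit(f,n-k)$ and in particular shows every interior critical point of $-f$ is non-degenerate.

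For a boundary point $p\in\partial M$ the only difference is that the admissible linear changes of coordinates must leave $x_1$ untouched. If $f\circ\phi^{-1}(x)=f(p)+x_n$ then $(-f)\circ\phi^{-1}(x)=-f(p)-x_n$, and the flip $x_n\mapsto -x_n$ (a tangent coordinate) is admissible, so the regular boundary points of $f$ and of $-f$ coincide. If $p$ is a non-degenerate boundary critical point of index $(k,\eta)$, witnessed by $f\circ\phi^{-1}(x)=f(p)+\eta x_1-x_2^2-\dots-x_{k+1}^2+x_{k+2}^2+\dots+x_n^2$, then $(-f)\circ\phi^{-1}(x)=-f(p)-\eta x_1+x_2^2+\dots+x_{k+1}^2-x_{k+2}^2-\dots-x_n^2$; of the $n-1$ tangent coordinates exactly $k$ carried a negative square for $f$, hence exactly $n-1-k$ carry one for $-f$, and permuting the tangent coordinates (fixing $x_1$) to list those first gives the index-$(n-1-k,-\eta)$ normal form for $-f$. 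Reading this in both directions yields $\Crit((-f),(k,\eta))=\Crit(f,(n-k-1,-\eta))$, with every boundary critical point of $-f$ non-degenerate.

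Finally I would assemble the conclusion: ``regular'' has been shown to be symmetric under $f\leftrightarrow -f$ both in the interior and on $\partial M$, so the critical locus of $-f$ equals that of $f$; by the paragraphs above each such point is a non-degenerate critical point of $-f$ of the stated index; and the critical values of $-f$ are the images under $t\mapsto -t$ of those of $f$, so they remain pairwise distinct and finite in number. Together with the (immediate) fact that $-f$ is smooth (resp.\ PL), this shows $-f$ is a Morse function and establishes the two displayed identities. I do not expect a serious obstacle here; the one point genuinely requiring care — and the only place the argument is not completely mechanical — is the asymmetry between the normal coordinate $x_1$ and the tangent coordinates on the boundary: it is exactly because $x_1$ may not be flipped that the sign $\eta$ reverses rather than being absorbed, and that the boundary index drops to $n-1-k$ rather than $n-k$.
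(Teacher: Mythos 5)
Your proof is correct and follows exactly the route the paper intends: the paper states Lemma \ref{lem:negcrit} without proof, remarking only that the critical points and indices of $(-f)$ can be determined "using charts," and your argument is precisely that chart/normal-form bookkeeping (negate the local model, then post-compose with a coordinate permutation and sign flips that preserve the half-space $\{x_1\ge 0\}$), carried out uniformly in the smooth and PL settings. The sign and index counts ($n-k$ negative terms in the interior, $n-k-1$ among the tangential coordinates on the boundary, with $\eta\mapsto-\eta$ because $x_1$ cannot be flipped) all check out.
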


This facilitates analogous homology results as in Proposition \ref{prop:strongMorse} but for relative homology of superlevel sets.

\begin{corollary}\label{cor:MorseRel}
Let $(M, \partial M)$ be a smooth (respectively PL) $n$-manifold with boundary and $f: M \to \R$ a smooth (respectively PL) Morse function. 
\begin{itemize}
\item If $t$ is not a critical value of neither $f$ nor $f|_{\partial M}$ then $H_i(M^{t-\epsilon}, M^{t+\epsilon})=0$  for all $i$ and all $\epsilon >0$ sufficiently small .
\item If $p\in \Crit(f, n-k)$ then $H_i(M^{f(p)-\epsilon},M^{f(p)+\epsilon})=\delta_i^{k} \, \mathbb{F}$ for all $i$ and for all $\epsilon >0$ sufficiently small. 
\item If $p\in \Crit(f, (n-k-1,+1))$ then $H_i(M^{f(p)-\epsilon},M^{f(p)+\epsilon})=0$ for all $i$ and for  $\epsilon >0$ sufficiently small.
\item If $p\in \Crit(f, (n-k-1,-1))$ then $H_i(M^{f(p)-\epsilon}, M^{f(p)+\epsilon})=\delta_i^{k} \, \mathbb{F}$  for all $i$ and for  $\epsilon >0$ sufficiently small.
\end{itemize}
\end{corollary}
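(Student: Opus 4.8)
The plan is to reduce everything to Proposition~\ref{prop:strongMorse} applied to the function $(-f)$, using the elementary observation that superlevel sets of $f$ are sublevel sets of $(-f)$. Explicitly, $M^s = f^{-1}[s,\infty) = (-f)^{-1}(-\infty,-s]$, which in the notation of Section~\ref{ssec:ext_pers} is the sublevel set $(-f)_{-s}$. Hence for any $t$ and all small $\epsilon>0$,
$$H_i(M^{t-\epsilon}, M^{t+\epsilon}) = H_i\big((-f)_{-t+\epsilon},\, (-f)_{-t-\epsilon}\big),$$
so each statement about relative homology of superlevel sets of $f$ near the value $t$ becomes a statement about relative homology of sublevel sets of $(-f)$ near the value $-t$. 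The ordering is consistent: $M^{t-\epsilon} \supset M^{t+\epsilon}$ corresponds to $(-f)_{-t+\epsilon} \supset (-f)_{-t-\epsilon}$, so the pair appears in exactly the form $(M'_{c+\epsilon}, M'_{c-\epsilon})$ used in Proposition~\ref{prop:strongMorse} with $M' = $ domain of $(-f)$ and $c = -t$.

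First I would invoke Lemma~\ref{lem:negcrit} to know that $(-f)$ is itself a Morse function on $(M,\partial M)$, so that Proposition~\ref{prop:strongMorse} is available for it, together with the index dictionary $\Crit((-f),k)=\Crit(f,n-k)$ and $\Crit((-f),(k,\eta))=\Crit(f,(n-k-1,-\eta))$. Then I would check the four bullets one at a time. If $t$ is not a critical value of $f$ or $f|_{\partial M}$, then $-t$ is not a critical value of $(-f)$ or $(-f)|_{\partial M}$, and the first bullet of Proposition~\ref{prop:strongMorse} gives vanishing. If $p \in \Crit(f, n-k)$, then $p \in \Crit((-f), k)$ with $(-f)(p) = -t$, so the second bullet yields $H_i = \delta_i^k\,\mathbb{F}$. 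If $p \in \Crit(f, (n-k-1,+1))$, then taking $\eta = -1$ in the dictionary gives $p \in \Crit((-f),(k,-1))$, a $(-)$-critical point of $(-f)$, so the third bullet gives vanishing. Finally if $p \in \Crit(f, (n-k-1,-1))$, then $p \in \Crit((-f),(k,+1))$, a $(+)$-critical point of $(-f)$, and the fourth bullet gives $H_i = \delta_i^k\,\mathbb{F}$.

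The only real work is the index bookkeeping: one must feed the translated indices from Lemma~\ref{lem:negcrit} into the correct slots of Proposition~\ref{prop:strongMorse}, and track which sign of a boundary critical point contributes homology after passing to $(-f)$ (a $(+)$-critical point of $f$ of boundary index $n-k-1$ becomes $(-)$-critical for $(-f)$, hence contributes nothing, and vice versa). I do not expect a deeper obstacle; the way the corollary is phrased — with $n-k$ and $n-k-1$ appearing on the $f$ side — is precisely engineered so that the statement for $(-f)$ comes out indexed by $k$, matching Proposition~\ref{prop:strongMorse} verbatim. One point worth stating cleanly is that the phrase ``for all $\epsilon>0$ sufficiently small'' transfers without change, since the $\epsilon$-neighbourhood of $-t$ relevant to $(-f)$ is just the reflection of the $\epsilon$-neighbourhood of $t$ relevant to $f$, so the thresholds coincide.
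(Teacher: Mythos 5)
Your argument is correct and is essentially identical to the paper's own proof: both rewrite superlevel sets of $f$ as sublevel sets of $(-f)$, invoke Lemma~\ref{lem:negcrit} for the index and sign dictionary, and then apply Proposition~\ref{prop:strongMorse} to $(-f)$ bullet by bullet. The index bookkeeping in your proposal (including the sign flip for boundary critical points) matches the paper exactly.
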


\begin{proof}
We first want to write the superlevel sets of $f$ in terms of sublevel sets of $(-f)$. 
We have  $M^{s}=(-f)^{-1}(-\infty, -s]$, and thus
$$H_i(M^{f(p)-\epsilon}, M^{f(p)+\epsilon})=H_i\big((-f)^{-1}(-\infty, (-f)(p)+\epsilon], (-f)^{-1}(-\infty, (-f)(p)-\epsilon] \big).$$

If $t$ is not a critical value for $f$ nor $f|_{\partial M}$ then by Lemma \ref{lem:negcrit} $(-t)$ is not a critical value of $(-f)$ nor $(-f)|_{\partial M}$. By Proposition \ref{prop:strongMorse} we know $$H_i((-f)^{-1}(-\infty, -t+\epsilon], (-f)^{-1}(-\infty, -t -\epsilon])=0$$  for all $i$ and all $\epsilon >0$ sufficiently small.

If $p\in \Crit(f, n-k)$, then by Lemma \ref{lem:negcrit} we have $p\in \Crit((-f),k)$. If $p\in \Crit(f,(n-k-1, -1))$ then by Lemma \ref{lem:negcrit} $p\in \Crit((-f),(k,+1))$.
In both cases we can apply Proposition \ref{prop:strongMorse}, with $(-f)$ at $p$, which implies that 
\begin{align*}
H_i\big((-f)^{-1}(-\infty, (-f)(p)+\epsilon], (-f)^{-1}(-\infty, (-f)(p)-\epsilon]\big)=\delta_i^{k} \, \mathbb{F}.
\end{align*}

If $p\in \Crit(f, (n-k-1,+1))$, then by Lemma \ref{lem:negcrit} we have $p\in \Crit((-f),(k,-1))$. By Proposition \ref{prop:strongMorse}, with $(-f)$ at $p$, we know 
\begin{align*}
H_i\big((-f)^{-1}(-\infty, (-f)(p)+\epsilon], (-f)^{-1}(-\infty, (-f)(p)-\epsilon]\big)=0.
\end{align*}
for  $\epsilon >0$ sufficiently small.
\end{proof}

As might be expected, there is a direct relationship between the critical values of Morse functions and the endpoints of intervals in the barcode decomposition of extended persistent homology. We will need to distinguish between endpoints lying in the ordinary and relative parameter spaces  as they behave differently.

Let $\XPH(M,f)$ be the extended persistence module constructed from $f:M\to \R$. 
To ease notation let $$\bt^{\ord}_k(M,f):=\mf{b}(\Ord_k(M,f)\oplus\Ess_k(M,f))$$ and $$\bt^{\rel}_k(M,f):=\mf{b}(\Rel_k(M,f))$$ These are the sets of parameters $\{(t, \ord)\}$ and $\{(t, \rel)\}$ respectively where a new interval begins in the interval decomposition of $\XPH(M,f)$. 
Similarly let $$\dt^{\ord}_k(M,f):=\mf{d}(\Ord_k(M,f))$$ and $$\dt^{\rel}_k(M,f):=\mf{d}(\Rel_k(M,f)\oplus\Ess_k(M,f)).$$ These are the sets of parameters $\{(t, \ord)\}$ and $\{(t, \rel)\}$ respectively where an interval finishes in the interval decomposition of $\XPH(M,f)$. 
Furthermore let $\bt_k(M,f)=\bt_k^{\ord}(M,f)\cup \bt_k^{\rel}(M,f)$ and $\dt_k(M,f)=\dt_k^{\ord}(M,f)\cup \dt_k^{\rel}(M,f)$ denote the sets of birth and death parameters respectively for the extended persistence module $\XPH(M,f)$. 
%Note here that this includes all endpoints for any of the ordinary, relative and essential classes. 
In constructing these sets we use the fact that every essential class is born somewhere in the ordinary parameter range and then dies somewhere in the relative parameter range.

The following corollary follows from  Proposition \ref{prop:strongMorse} and Lemma \ref{cor:MorseRel}.
%\VR{We've not explicitly defined a ``strong Morse function''.}
\begin{cor}\label{lem:ord}
Let $(M,\partial M)$ be an $n$-dimensional manifold with boundary and let $f:M\to \R$ be a Morse function. Then
$$\bt^{\ord}_k(M,f)\cup \dt^{\ord}_{k-1}(M,f)=\{(f(p),\ord)| \; p\in \Crit(f,k)\cup \Crit(f,(k,+1))\}.$$
and
$$\bt^{\rel}_k(M,f)\cup \dt^{\rel}_{k-1}(M,f)=\{(f(p),\rel)| \; p\in \Crit(f,n-k)\cup \Crit(f,(n-k-1,-1))\}.$$
%$x\in \Int (M)$ is a critical point of index $j$ or $x\in \partial M$ is a critical point of index $(j, -1)$.
\end{cor}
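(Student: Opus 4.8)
The plan is to establish the two displayed equations separately, treating the ordinary and relative halves of $\XPH_\bullet(M,f)$ in parallel; in each case the argument reduces to a standard fact about interval decompositions together with the Morse-theoretic homology computations of Proposition~\ref{prop:strongMorse} and Corollary~\ref{cor:MorseRel}. First I would record the dictionary between the submodules and the underlying filtrations. Restricting $\XPH_\bullet(M,f)$ to the ordinary parameters $O$ recovers the sublevel-set module $t\mapsto H_\bullet(M_t)$, whose finite-length intervals constitute $\Ord_\bullet(M,f)$ and whose remaining intervals (half-infinite as seen inside $O$) are the $O$-portions of the essential intervals $\Ess_\bullet(M,f)$; hence $\bt^{\ord}_k(M,f)=\mf{b}(\Ord_k\oplus\Ess_k)$ is exactly the set of birth parameters of this module in degree $k$, while $\dt^{\ord}_{k-1}(M,f)=\mf{d}(\Ord_{k-1})$ is exactly its set of finite death parameters in degree $k-1$ (an essential class never dies within $O$). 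Symmetrically, restricting to $R$ recovers $t\mapsto H_\bullet(M,M^t)$ with $t$ running in the reversed order; honest relative intervals are both born and die inside $R$, whereas essential intervals appear as half-infinite intervals present from the start of $R$ and dying inside it, so $\bt^{\rel}_k(M,f)=\mf{b}(\Rel_k)$ and $\dt^{\rel}_{k-1}(M,f)=\mf{d}(\Rel_{k-1}\oplus\Ess_{k-1})$ are, respectively, the set of finite births in degree $k$ and the set of deaths in degree $k-1$ of this module.

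The elementary input I would use is: for a pointwise finite dimensional persistence module over $\R$ (equivalently over $O$ or $R$) with its half-open interval decomposition, and for a parameter $\theta$ separated from all other birth and death parameters, the transition map $\varphi$ from ``just below $\theta$'' to ``just above $\theta$'' satisfies $\dim\operatorname{coker}\varphi=\#\{\text{intervals born at }\theta\}$ and $\dim\ker\varphi=\#\{\text{intervals dying at }\theta\}$. Since $f$ is Morse its critical values are finite in number and pairwise distinct, so for $\epsilon$ less than half the minimal gap every relevant $\theta=f(p)$ is separated in this sense. For the ordinary half, $\operatorname{coker}\varphi_k$ and $\ker\varphi_{k-1}$ are read off from the long exact sequence of the pair $(M_{t+\epsilon},M_{t-\epsilon})$: $\operatorname{coker}\varphi_k$ embeds into $H_k(M_{t+\epsilon},M_{t-\epsilon})$ and $\ker\varphi_{k-1}$ is a quotient of it, and their dimensions sum to $\dim H_k(M_{t+\epsilon},M_{t-\epsilon})$. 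Hence a birth in degree $k$ or a death in degree $k-1$ occurs at $t$ precisely when $H_k(M_{t+\epsilon},M_{t-\epsilon})\neq 0$; in the Morse setting this group is at most one-dimensional, so at most one such event occurs. By Proposition~\ref{prop:strongMorse} it is nonzero (and then $\cong\mathbb F$) exactly when $t=f(p)$ for some $p\in\Crit(f,k)\cup\Crit(f,(k,+1))$, which, ranging over all critical points, gives the first displayed equation.

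For the relative half I would run the same argument with $\varphi$ now the transition map $H_\bullet(M,M^{t+\epsilon})\to H_\bullet(M,M^{t-\epsilon})$ induced by $M^{t+\epsilon}\subseteq M^{t-\epsilon}$, extracting $\ker\varphi$ and $\operatorname{coker}\varphi$ from the long exact sequence of the \emph{triple} $(M,\,M^{t-\epsilon},\,M^{t+\epsilon})$. There $\ker\varphi_k$ is a quotient of $H_k(M^{t-\epsilon},M^{t+\epsilon})$ while $\operatorname{coker}\varphi_k$ embeds into $H_{k-1}(M^{t-\epsilon},M^{t+\epsilon})$, so a birth in degree $k$ or a death in degree $k-1$ of the relative half at $t$ occurs precisely when $H_{k-1}(M^{t-\epsilon},M^{t+\epsilon})\neq 0$. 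Applying Corollary~\ref{cor:MorseRel} in homological degree $k-1$ then identifies the parameters at which this happens with the function values of the critical points of the relevant index, which yields the second displayed equation. (Alternatively one could try to import the relative statement from the ordinary one through the duality of Corollary~\ref{cor:duality} applied to $-f$; but that duality pairs only finite intervals, so the contribution of the essential classes to $\dt^{\rel}_{k-1}$ would still have to be treated by hand, whereas the direct argument above handles relative and essential intervals uniformly.)

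The step I expect to be the main obstacle is the relative half, for two reasons. First, the long exact sequence of a triple is graded differently from that of a pair, so the homological degrees must be tracked with care to avoid an off-by-one error when matching births and deaths of $H_\bullet(M,M^t)$ to the local relative homology groups of Corollary~\ref{cor:MorseRel}. Second, one must justify the bookkeeping across the $O$/$R$ seam --- that each essential interval contributes to the endpoint counts only as a birth in $O$ and only as a death in $R$ --- so that no interval is double counted or overlooked. The ordinary half, by contrast, is a routine application of the standard persistence/Morse-theory correspondence.
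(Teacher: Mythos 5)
Your overall strategy is exactly the one the paper intends (the paper gives no written proof, only the remark that the corollary ``follows from'' Proposition~\ref{prop:strongMorse} and Corollary~\ref{cor:MorseRel}), and your treatment of the ordinary half is complete and correct: the short exact sequence $0\to\operatorname{coker}\varphi_k\to H_k(M_{t+\epsilon},M_{t-\epsilon})\to\ker\varphi_{k-1}\to 0$ coming from the long exact sequence of the pair, together with the first two and last two bullets of Proposition~\ref{prop:strongMorse}, gives the first displayed equation. Your accounting of which intervals can be born or die in $O$ versus $R$ (essential intervals are born only in $O$ and die only in $R$) also matches the paper's definitions of $\bt^{\ord}_k$, $\dt^{\ord}_k$, $\bt^{\rel}_k$, $\dt^{\rel}_k$.

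The one genuine gap is the final sentence of your relative half, where you assert that applying Corollary~\ref{cor:MorseRel} ``in homological degree $k-1$ \dots yields the second displayed equation'' without performing the index substitution. Your long-exact-sequence-of-the-triple argument correctly shows that a birth in relative degree $k$ or a death in relative degree $k-1$ at $(t,\rel)$ occurs precisely when $H_{k-1}(M^{t-\epsilon},M^{t+\epsilon})\neq 0$. But Corollary~\ref{cor:MorseRel}, read in degree $i=k-1$, says this group is $\delta_{i}^{k-1}\,\mathbb{F}$ exactly for $p\in\Crit(f,n-(k-1))\cup\Crit(f,(n-(k-1)-1,-1))=\Crit(f,n-k+1)\cup\Crit(f,(n-k,-1))$ --- not $\Crit(f,n-k)\cup\Crit(f,(n-k-1,-1))$ as printed in the statement. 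So your argument, carried to completion, proves an index-shifted version of the second equation, and the off-by-one you yourself flagged as the danger point is precisely where the writeup stops short. Simple examples confirm that your derived indices are the correct ones and that the printed right-hand side carries a shift: for the closed disk $M=D^2\subset\R^2$ with a height function, the only relative-range event is the death of $\Ess_0$ at $(\max f,\rel)$, located at the $(1,-1)$-critical point at the top of the boundary circle; with $k=1$ the printed right-hand side asks for $\Crit(f,1)\cup\Crit(f,(0,-1))$, which is empty, whereas $\Crit(f,n-k+1)\cup\Crit(f,(n-k,-1))=\Crit(f,(1,-1))$ picks out exactly that point. You should therefore either state the second equation with right-hand side $\{(f(p),\rel)\mid p\in\Crit(f,n-k+1)\cup\Crit(f,(n-k,-1))\}$, or equivalently re-index the left-hand side as $\bt^{\rel}_{k+1}\cup\dt^{\rel}_{k}$; as it stands, the claim that your computation ``yields the second displayed equation'' is false, and no correct argument can yield that equation literally as written.
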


\subsection{Relating the extended persistent homology of a manifold to that of its boundary}
\label{subsec:XPH_boundary}

We can now restrict to the situation of interest for the $\XPHT$; that of computing the extended persistent homology of a height function over a compact $n$-dimensional manifold with boundary embedded in $\R^n$.  
The results in this section start by comparing the sets of birth and death parameters for the height filtration of the manifold and for its boundary, in Propositions~\ref{prop:critM} and~\ref{prop:sametimes}. 
The next step is to show that these births and deaths are paired consistently as endpoints of intervals in the relevant persistence modules (Theorem~\ref{thm:oplus}). 
We finish with a complete characterisation of the extended persistent homology for the manifold as a  submodule of that for its boundary in Theorem~\ref{thm:submodule}.  

The height function is specified in a direction $v$ and restricted to various subsets of $\R^n$. 
That is, $h_v:\R^n\to \R$ with $h_v(x)=x\cdot v$. To ease notation let $h_v^S$ denote the restriction of the height function to $S\subset \R^n$, that is $h_v^S=h_v|_S$.

\begin{proposition}\label{prop:critM}
Let $M\subset \R^n$ be a compact $n$-manifold with boundary.  Suppose that $h_v^M:M \to \R$, the height function in direction $v$, is a Morse function.  For each critical value $t$ let $p(t)$ be the unique critical point of $h^M_v$ or $h_v^{\partial M}$ with $h_v(p)=t$. For all $k>0$ we have 

\begin{align*}
\bt_k^{\ord}(M,h_v^M)&=\{(t,\ord)\in\bt_k^{\ord}(\partial M, h_v^{\partial M}): \sgn(h_v^M,p(t))=+1)) \}\\
\bt_k^{\rel}(M,h_v^M)&= \{(t, \rel)\in \bt_k^{\rel}(\partial M, h_v^{\partial M}):\sgn(h_v^M,p(t))=-1\}\\
\dt_k^{\ord}(M,h_v^M)&=\{(t,\ord)\in\dt_k^{\ord}(\partial M, h_v^{\partial M}): \sgn(h_v^M,p(t))= +1\} \\
\dt_k^{\rel}(M,h_v^M)&= \{(t, \rel)\in \dt_k^{\rel}(\partial M, h_v^{\partial M}):\sgn(h_v^M,p(t))=-1\}\\
\end{align*}

\end{proposition}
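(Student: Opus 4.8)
The plan is to pin down the \emph{sets} of interval endpoints with Corollary~\ref{lem:ord}, then use a local Morse-theoretic comparison of $M$ with $\partial M$ to see how those endpoints split into births and deaths, and finally combine the two with a counting argument.

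\emph{Setup.} Since $h_v$ is the restriction of an affine map, its derivative — or, in the PL setting, its local linear normal form — is nowhere the zero functional, so $h_v^M$ has no critical points in $\Int(M)$; every critical point lies on $\partial M$ and carries a sign. Comparing the chart normal form of an index-$(k,\eta)$ boundary critical point of $h_v^M$ with its restriction to $\{x_1=0\}$, such a point is a non-degenerate critical point of $h_v^{\partial M}$ of Morse index $k$; hence $\Crit(h_v^{\partial M},k)=\Crit(h_v^M,(k,+1))\sqcup\Crit(h_v^M,(k,-1))$. Applying Corollary~\ref{lem:ord} to $M$, and using $\Crit(h_v^M,k)=\emptyset$, gives $\bt_k^{\ord}(M,h_v^M)\cup\dt_{k-1}^{\ord}(M,h_v^M)=\{(h_v(p),\ord): p\in\Crit(h_v^M,(k,+1))\}$ and likewise $\bt_k^{\rel}(M,h_v^M)\cup\dt_{k-1}^{\rel}(M,h_v^M)=\{(h_v(p),\rel): p\in\Crit(h_v^M,(n-k-1,-1))\}$; applying it to the closed manifold $\partial M$ gives the analogues with $\Crit(h_v^{\partial M},k)$ and $\Crit(h_v^{\partial M},n-k-1)$. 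Together with the index identity, the set of ordinary (resp.\ relative) interval endpoints of $M$ is already seen to equal the set of ordinary (resp.\ relative) endpoints of $\partial M$ whose critical point is $(+)$-critical (resp.\ $(-)$-critical). Moreover the relevant local relative homology group is one-dimensional at each such critical value (Proposition~\ref{prop:strongMorse}, Corollary~\ref{cor:MorseRel}), so exactly one birth or one death occurs there; thus for $M$, and for $\partial M$, these are disjoint unions of a ``birth'' set and a ``death'' set.

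\emph{Matching births: the easy half.} At a $(+,k)$-critical value $t=h_v(p)$ take a chart with $\phi(p)=0$ and $h_v\circ\phi^{-1}(x)=h_v(p)+x_1-(x_2^2+\dots+x_{k+1}^2)+(x_{k+2}^2+\dots+x_n^2)$ on $\{x_1\geq0\}$ (absolute values in the PL case). By Morse-theoretic excision, $H_*(M_{t+\epsilon},M_{t-\epsilon})$ and $H_*((\partial M)_{t+\epsilon},(\partial M)_{t-\epsilon})$ are computed in this chart and in the subchart $\{x_1=0\}$ respectively, and since $h_v$ is increasing in $x_1$ the map $(x_1,x')\mapsto((1-u)x_1,x')$ is a deformation retraction of the local pair for $M$ onto the local pair for $\partial M$; hence the inclusion induces an isomorphism $H_k((\partial M)_{t+\epsilon},(\partial M)_{t-\epsilon})\xrightarrow{\sim}H_k(M_{t+\epsilon},M_{t-\epsilon})$. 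Comparing the long exact sequences of the two pairs, naturality of the connecting homomorphisms forces the connecting map for $M$ to vanish whenever the one for $\partial M$ does; i.e.\ if crossing $t$ is a birth for $\partial M$ in $\Ord_k\oplus\Ess_k$ it is also a birth for $M$. Hence $\{(t,\ord)\in\bt_k^{\ord}(\partial M,h_v^{\partial M}):\sgn(h_v^M,p(t))=+1\}\subseteq\bt_k^{\ord}(M,h_v^M)$, and therefore also $\dt_{k-1}^{\ord}(M,h_v^M)\subseteq\{(t,\ord)\in\dt_{k-1}^{\ord}(\partial M,h_v^{\partial M}):\sgn(h_v^M,p(t))=+1\}$.

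\emph{The reverse containment, and conclusion.} Because the two displays of the Setup exhibit the same finite set $P$ of $(+,k)$-critical values as a disjoint union of a birth set and a death set in two ways, it remains to rule out a $(+,k)$-critical point that is a \emph{birth} for $M$ but a \emph{death} for $\partial M$ — equivalently, that $\iota_*\colon H_{k-1}((\partial M)_{t-\epsilon})\to H_{k-1}(M_{t-\epsilon})$ kills the attaching sphere of the $k$-handle at $p$ without that class already being trivial in $(\partial M)_{t-\epsilon}$. I would settle this by a closer analysis of how $M_{t-\epsilon}$ is assembled from $(\partial M)_{t-\epsilon}$, which sits inside it as part of the boundary with complementary boundary the slice $N_{t-\epsilon}=M\cap h_v^{-1}(t-\epsilon)$, using Lefschetz duality on the manifold $M_{t-\epsilon}$ to identify $H_*(M_{t-\epsilon},(\partial M)_{t-\epsilon})$ with $H^{n-*}(M_{t-\epsilon},N_{t-\epsilon})$, together with the observation (again from the chart normal forms) that every critical point contributing to $H_*(M_{t-\epsilon},N_{t-\epsilon})$ has its descending cell in $\partial M$. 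Granting this, the two partitions of $P$ coincide, giving the first and third displayed equations; the second and fourth follow by running the identical argument on the superlevel-set filtration, with Corollary~\ref{cor:MorseRel} in place of Proposition~\ref{prop:strongMorse} and the $(n-k-1,-1)$-critical points (at which $h_v$ decreases into $M$) playing the role of the $(k,+1)$-critical points, using the duality of Corollary~\ref{cor:duality} to keep track of the essential classes. I expect the main obstacle to be precisely this reverse containment — making rigorous, and uniform across the smooth and PL settings, the claim that $\iota_*$ does not annihilate these particular attaching spheres — with the bookkeeping of essential classes in the relative part a secondary, more routine, nuisance.
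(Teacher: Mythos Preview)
Your setup and ``easy half'' are correct, and you have correctly located the difficulty: ruling out a $(+,k)$-critical value at which the $k$-handle is a birth for $M$ but a death for $\partial M$.  However, the Lefschetz-duality sketch you propose to close this gap is not a proof; it is a description of some ingredients that might be assembled into one.  Identifying $H_*(M_{t-\epsilon},(\partial M)_{t-\epsilon})$ with $H^{n-*}(M_{t-\epsilon},N_{t-\epsilon})$ does not by itself control the kernel of $\iota_*\colon H_{k-1}((\partial M)_{t-\epsilon})\to H_{k-1}(M_{t-\epsilon})$ on the specific attaching class you care about, and the claim that ``every critical point contributing to $H_*(M_{t-\epsilon},N_{t-\epsilon})$ has its descending cell in $\partial M$'' needs a careful argument about how the handle decomposition of $M_{t-\epsilon}$ interacts with both pieces of its boundary.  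As written this step is a genuine gap.

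The paper sidesteps the asymmetry entirely with a different and cleaner device: pass to the \emph{complement} $L=\overline{B(0,R)}\setminus\Int(M)$, so that $\partial M=M\cap L$ and $M\cup L$ is a ball.  The key observation is that a $(+)$-critical point of $h_v^M$ on $\partial M$ is automatically a $(-)$-critical point of $h_v^L$, so by Proposition~\ref{prop:strongMorse} the Betti numbers of $L_s$ (and, via Corollary~\ref{cor:MorseRel}, of $(L,L^s)$) are \emph{unchanged} across such a critical value.  Mayer--Vietoris for the decomposition $M_s\cup L_s$ gives $\beta_k((\partial M)_s)=\beta_k(M_s)+\beta_k(L_s)$ for $k>0$ (and a relative version for superlevel sets), so the jump in $\beta_k(M_s)$ equals the jump in $\beta_k((\partial M)_s)$ across any $(+)$-critical value.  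This single Betti-number identity gives both containments at once, with no need to analyse attaching maps or invoke duality on $M_{t-\epsilon}$.  Your local chart comparison is not wrong, but it only sees one side of the ledger; introducing $L$ is what makes the bookkeeping symmetric.
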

\begin{proof}
Choose $R>0$ large enough so that  $M\subset B(0,R)$ where $B(0,R)$ is the open ball of radius $R$ centred on the origin. Let $L=\overline{B(0,R)}\backslash \Int(M)$.  As there are only finitely many critical points of $h_v^M$ and $h_v^{\partial M}$ there is an $\epsilon>0$ such that all the critical values are at least $\epsilon$ apart.  The critical values lie within $[\inf(h_v(M)),\sup(h_v(M))]\subset (-R, R)$ so we can  reduce $\epsilon$ to be small enough that no critical value is within $\epsilon$ of $-R$ or $R$. 

The function $h_v$ defined over all of $\R^n$ has no critical points, so there will be no critical points in the interior of $M$.  This means we need only consider critical points of $h_v^{\partial M}$. 

For each $s\in \R$ we consider the sublevel sets of $h_v$ restricted to the three subsets: $M_s$,
$(\partial M)_s$
and $L_s$.
By construction $M_s\cap L_s=(\partial M)_s$ and $M_s\cup L_s= h_v^{-1}(-\infty, s]\cap \overline{B(0,R)}$.  
For each $k>0$  we therefore have $H_{k+1}(M_s\cup L_s)=0=H_k(M_s\cup L_s)$. 
Using this in the Mayer-Vietoris sequence shows us that  
$H_k((\partial M)_s)$ and $H_k(M_s)\oplus H_k(L_s)$ are isomorphic and hence 
\begin{align}\label{eq:sumBetti}
\beta_k(M_s) + \beta_k(L_s)&=\beta_k((\partial M)_s)
\end{align}
for all $s\in \R.$ 
For $k=0$ we know $H_0(M_s\cup L_s)=1$ whenever $s\geq -R$. 
Mayer-Vietoris then gives the short exact sequence
$$0\to H_0((\partial M)_s)\to H_0(M_s)\oplus H_0(L_s)\to H_0(M_s\cup L_s) \to 0.$$
By comparing the ranks we have
\begin{align}\label{eq:sumBetti0}
\beta_0(M_s) +\beta_0(L_s)&=\beta_0((\partial M)_s)+1 
\end{align}
whenever $s>-R$.
 
Suppose that $(t,\ord)\in \bt_k^{\ord}(M,h_v)$ and thus $\beta_k(M_{t+\epsilon})-\beta_k(M_{t-\epsilon})=1$. By Proposition \ref{prop:strongMorse} we know 
$\sgn(h_v^M,p(t))=+1$ and this  implies $\sgn(h_v^L,p(t))=-1$.
Proposition \ref{prop:strongMorse} now implies that $\beta_k(L_{t+\epsilon})=\beta_k(L_{t-\epsilon})$. 
For $k>0$ we can use \eqref{eq:sumBetti} to calculate 
\begin{align*}
\beta_k((\partial M)_{t+\epsilon})&-\beta_k((\partial M)_{t-\epsilon}) \\
&=(\beta_k(M_{t+\epsilon})+\beta_k(L_{t+\epsilon}))-(\beta_k(M_{t-\epsilon})+\beta_k(L_{t-\epsilon}))\\
&=1.
\end{align*}
If $k=0$ we instead use \eqref{eq:sumBetti0} to calculate
\begin{align*}
\beta_0((\partial M)_{t+\epsilon})&-\beta_0((\partial M)_{t-\epsilon})\\
&=(\beta_0(M_{t+\epsilon})+\beta_0(L_{t+\epsilon})-1)-(\beta_0(M_{t-\epsilon})+\beta_0(L_{t-\epsilon})-1)\\
&=1.
\end{align*}
This is where we use the requirement that $\epsilon$ is small enough that all critical points of $M$ are  greater than $-R+\epsilon$.
Since $h_v^{\partial M}$ is Morse and $t$ is the only critical value of $h_v^{\partial M}$ in $[t-\epsilon,t+\epsilon]$ we thus conclude that there is a birth event at $t$, that is $(t,\ord)\in \bt_k^{\ord}(\partial M, h_v)$.

Now suppose that $(t,\ord)\in \bt_k^{\ord}(\partial M,h_v)$ with $\sgn(h_v^M,p(t))=+1$. 
This means $\beta_k((\partial M)_{t+\epsilon})-\beta_k((\partial M)_{t-\epsilon})=1$, and  $\sgn(h_v^L,p(t))=-1$. Proposition \ref{prop:strongMorse} again tells us that $\beta_k(L_{t+\epsilon})=\beta_k(L_{t-\epsilon})$ and using \eqref{eq:sumBetti} we calculate
\begin{align*}
\beta_k(M_{t+\epsilon})&-\beta_k(M_{t-\epsilon}) \\
&=(\beta_k((\partial M)_{t+\epsilon})-\beta_k(L_{t+\epsilon}))-(\beta_k((\partial M)_{t-\epsilon})-\beta_k(L_{t-\epsilon}))\\
&=1.
\end{align*}
If $k=0$ then we instead use \eqref{eq:sumBetti0} to calculate
\begin{align*}
\beta_0(M_{t+\epsilon})&-\beta_0(M_{t-\epsilon})\\
&=(\beta_k((\partial M)_{t+\epsilon})-\beta_k(L_{t+\epsilon})+1) - (\beta_k((\partial M)_{t-\epsilon})-\beta_k(L_{t-\epsilon})+1)\\
&=1.
\end{align*}

We have again used $t-\epsilon> -R$.  Since $t$ is the only critical value of $h_v^{\partial A}$ in $[t-\epsilon,t]$ we conclude that $(t,\ord)\in \bt_k^{\ord}(M, h_v)$.

When considering the sets of births and deaths in the relative parameter range we need to use a relative version of the Mayer-Vietoris sequence. For this recall that $M\cap L =\partial M$, and $M^s\cap L^s=\partial M^s$. 
The relative version of the Mayer-Vietoris sequence states that there is a long exact sequence
\begin{align*}
\cdots \to H_{k+1}(M\cup L, &M^s\cup L^s) \to H_k((\partial M),(\partial M)^s)\to \cdots \\
   \to &H_k(M,M^s)\oplus H_k(L,L^s)\to H_{k}(M\cup L, M^s\cup L^s)\to\cdots 
\end{align*}  
Since $M\cup L=B(0,R)$, and $M^s\cup L^s=B(0,R)\cap h_v^{-1}[s,\infty)$ we have (for $k\geq0$ this time) $H_{k+1}(M\cup L, M^s\cup L^s)=0=H_{k}(M\cup L,M^s\cup L^s)$ for $s < R$. 
This implies $H_k((\partial M),(\partial M)^s)$ and $H_k(M,M^s)\oplus H_k(L,L^s)$ are isomorphic and hence 
$$\beta_k(\partial M,(\partial M)^s)=\beta_k(M,M^s)+\beta_k(L,L^s)$$
 for all $s<R$.

Suppose $(t,\rel)\in \bt_k^{\rel}(M,h_v^M)$ and thus $\beta_k(M,M^{t-\epsilon})-\beta_k(M,M^{t+\epsilon})=1$. 
As $(t,\rel)\in \bt_k^{\rel}(M,h_v^M)$ we have $\sgn(h_v^M,p(t))=-1$ we thus $\sgn(h_v^L,p(t))=+1$. 

From Corollary \ref{cor:MorseRel} we know that  $\beta_k(L,L^{t-\epsilon})=\beta_k(L,L^{t+\epsilon})$. We then calculate
\begin{align*}
\beta_k(\partial M,(\partial M)^{t-\epsilon})&-\beta_k(\partial M,(\partial M)^{t+\epsilon})\\
&=(\beta_k(M,M^{t-\epsilon})+\beta_k(L,L^{t-\epsilon}))-(\beta_k(M,M^{t+\epsilon})+\beta_k(L,L^{t+\epsilon}))\\
&=1.
\end{align*}
Since $t$ is the only critical value of $h_v^{\partial M}$ in $[t-\epsilon,t+\epsilon]$ we  conclude that $(t,\rel)\in \bt_k^{\rel}(\partial M, h_v)$.

Now suppose that $(t,\rel)\in \bt_k(\partial M,h_v)$ with $\sgn(h_v^M,p(t))=-1$
These facts imply $\beta_k(\partial M,(\partial M)^t)-\beta_k(\partial M, (\partial M)^{t+\epsilon})=1$ and  $\sgn(h_v^L,p(t))=+1$. By Corollary \ref{cor:MorseRel} we therefore have  $\beta_k(L,L^{t-\epsilon})=\beta_k(L,L^{t+\epsilon})$ and can calculate
\begin{align*}
\beta_k(M,&M^{t-\epsilon})-\beta_k(M,M^{t+\epsilon})\\
&=(\beta_k(\partial M,(\partial M)^{t-\epsilon})-\beta_k(L,L^{t-\epsilon}))-(\beta_k(\partial M,(\partial M)^{t
+\epsilon})-\beta_k(L,L^{t+\epsilon}))\\
&=1.
\end{align*}
Since $t$ is the only critical value of $h_v^{\partial M}$ in $[t-\epsilon,t+\epsilon]$ we conclude that $(t,\rel)\in \bt_k^{\rel}(M, h_v)$.

The proof for the sets of death critical values is highly analogous; the difference of the Betti numbers is  $-1$ instead of $1$.
\end{proof}

Throughout the following collection of results we fix the following sets:
Let $A$ be a compact subset of $\R^n$ whose boundary $\partial A = X$ is therefore a finite collection of disjoint $n-1$ manifolds.  
Let $R>0$ such that $A\subset B(0,R)$. Let $B$ be the set such that $A\cup B=\overline{B(0,R)}$ and $A\cap B=X$. 

Let $S_R$ denote the sphere of radius $R$. We can observe that $\partial A=X$ and $\partial B=X\cup S_R$.
Let $h_v$ be the height function in the direction $v\in S^{n-1}$, with $v$ such that $h_v^X$ is a Morse function. 

\begin{proposition}\label{prop:sametimes}
Let $A\subset \R^n$ be a compact $n$-dimensional manifold with boundary. 
Let $h_v:\R^n \to \R$ be the height function in direction $v$ such that $h_v^X$ is a Morse function.
Let $R>0$ be such that $A\subset B(0,R)$. 
Let $B$ be the set such that $A\cup B=\overline{B(0,R)}$ and $A\cap B=X$. 
Let $S_R$ denote the sphere of radius $R$. Then we have the equality of the following disjoint unions:

\begin{align*}
\bt_0(X,v)\sqcup\{(-R,ord)\}&=\bt_0(A,v) \sqcup \bt_0(B,v)\\
\bt_k(X,v)&=\bt_k(A,v) \sqcup \bt_k(B,v)\qquad \text{ for }k>0\\
\dt_0(X,v)\sqcup\{(R,ord)\}&=\dt_0(A,v) \sqcup \dt_0(B,v)\\
\dt_k(X,v)&=\dt_k(A,v) \sqcup \dt_k(B,v)\qquad\text{ for }k>0
\end{align*}
\end{proposition}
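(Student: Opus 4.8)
The plan is to deduce this proposition as a direct consequence of Proposition~\ref{prop:critM}, applied twice: once to the manifold $A$ with boundary $X$, and once to the manifold $B$ with boundary $X \cup S_R$. The key observation is that every critical point of $h_v^X$ gets a sign $\sgn(h_v^A, p)$ from the perspective of $A$, and since $A$ and $B$ sit on opposite sides of $X$ inside $\overline{B(0,R)}$, we have $\sgn(h_v^B, p) = -\sgn(h_v^A, p)$ for each such point (this is exactly the relation between $\sgn(h_v^M, p)$ and $\sgn(h_v^L, p)$ used repeatedly in the proof of Proposition~\ref{prop:critM}). So the birth/death parameters of $X$ are partitioned by sign into those that appear for $A$ and those that appear for $B$.

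First I would set up the bookkeeping carefully. Fix $k > 0$. By Proposition~\ref{prop:critM} applied to $A$, the set $\bt_k^{\ord}(A, h_v^A)$ consists of exactly those $(t,\ord) \in \bt_k^{\ord}(X, h_v^X)$ with $\sgn(h_v^A, p(t)) = +1$, and $\bt_k^{\rel}(A, h_v^A)$ consists of those $(t,\rel) \in \bt_k^{\rel}(X, h_v^X)$ with $\sgn(h_v^A, p(t)) = -1$; similarly for deaths. Now apply Proposition~\ref{prop:critM} to $B$: here the role of the ``outside complement'' $L$ in that proposition's proof is played by $A$, so $\sgn(h_v^B, p(t)) = -\sgn(h_v^A, p(t))$, and $\bt_k^{\ord}(B, h_v^B)$ consists of those $(t,\ord) \in \bt_k^{\ord}(X', h_v^{X'})$ with $\sgn(h_v^B, p(t)) = +1$, i.e.\ $\sgn(h_v^A, p(t)) = -1$, where $X' = \partial B = X \cup S_R$. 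Since $h_v$ has no critical points on the round sphere $S_R$ except at the two poles $\pm R v$, and $\pm R$ are not within $\epsilon$ of any critical value of $h_v^X$ by the choice of $R$, the birth/death parameters of $X'$ in dimension $k > 0$ are the disjoint union of those of $X$ together with the contributions of the two poles of $S_R$. The south pole $-Rv$ is a minimum of $h_v^{S_R}$ contributing $(-R, \ord) \in \bt_0$, and the north pole contributes $(R,\ord) \in \dt_0$, with nothing in dimensions $k>0$ on the sphere other than one essential class; I would check that the sphere's own extended persistence (one $\Ord_0$-to-$\Ess_0$ class born at $-R$ and, for $n-1 > 0$, one $\Ess_{n-1}$ class dying at $R$) contributes only to the $k=0$ adjustment terms, so in dimensions $k > 0$ we get a clean partition.

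Combining: for $k > 0$, a parameter $(t,\ord) \in \bt_k^{\ord}(X, h_v^X)$ lies in $\bt_k(A,v)$ if $\sgn(h_v^A, p(t)) = +1$ and in $\bt_k(B, v)$ if $\sgn(h_v^A, p(t)) = -1$ — exactly one of the two — and likewise a parameter $(t, \rel) \in \bt_k^{\rel}(X, h_v^X)$ goes to whichever of $A$, $B$ has the point $(-)$-critical; so $\bt_k(X,v) = \bt_k(A,v) \sqcup \bt_k(B,v)$, and the same argument on the death side gives $\dt_k(X,v) = \dt_k(A,v) \sqcup \dt_k(B,v)$. For $k = 0$, the same sign-partition of the critical points of $h_v^X$ works, but now $X' = X \cup S_R$ also carries the two poles of $S_R$: the minimum pole contributes a birth $(-R, \ord)$ to $\bt_0(B, v)$ that does not come from $X$, and the maximum pole contributes a death $(R, \ord)$ to $\dt_0(B, v)$ not coming from $X$; reading Proposition~\ref{prop:critM} for $B$ (whose boundary is $X'$), these are precisely the extra elements, yielding $\bt_0(X,v) \sqcup \{(-R,\ord)\} = \bt_0(A,v) \sqcup \bt_0(B,v)$ and $\dt_0(X,v) \sqcup \{(R, \ord)\} = \dt_0(A, v) \sqcup \dt_0(B, v)$.

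The main obstacle I anticipate is handling dimension $k = 0$ and the sphere $S_R$ cleanly: Proposition~\ref{prop:critM} as stated requires $k > 0$ in some of its Mayer--Vietoris steps and treats $k=0$ with the $+1$ correction terms coming from $H_0$ of the ambient ball, so I need to re-run those $k=0$ bookkeeping arguments for the pair $(B, X')$ and verify that the only net effect beyond the sign-partition of $X$'s critical points is the single birth at $-R$ and single death at $R$ from the poles of $S_R$, with no spurious contributions in positive degrees from the sphere's essential homology. A secondary point to check is that $p(t)$, the critical point at value $t$, is genuinely unique across $h_v^A$, $h_v^B$ and their boundary restrictions — but since $h_v$ itself has no interior critical points and $h_v^X$ is Morse with distinct critical values, and the poles of $S_R$ have values $\pm R$ bounded away from those of $X$, this uniqueness holds and lets the disjoint-union statements be literal equalities of multisets.
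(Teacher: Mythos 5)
Your proposal is correct and follows essentially the same route as the paper: apply Proposition~\ref{prop:critM} once to $A$ (with boundary $X$) and once to $B$ (with boundary $X\sqcup S_R$), use the fact that each critical point of $h_v^X$ is $(+)$-critical for exactly one of $A$, $B$ so the signed subsets partition $\bt_k(X,v)$ and $\dt_k(X,v)$, and account separately for the two poles of $S_R$, which contribute only the dimension-$0$ birth at $-R$ and death at $R$. The extra care you flag for $k=0$ and for the sphere's essential classes is exactly how the paper disposes of those points as well.
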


\begin{proof}
Since $\partial A=X$ we can use Proposition \ref{prop:critM} to say 
\begin{align*}
\bt_k(A,h_v)=&\{(t,ord)\in \bt_k(X,h_v) | \; \sgn(h_v^A, p(t ))=+1 \} \\
&\cup \{(t,rel) \in \bt_k(X,h_v) | \; \sgn(h_v^A, p(t ))=-1 \}\\
\dt_k(A,h_v)= & \{(t,ord)\in \dt_k(X,h_v) | \; \sgn(h_v^A, p(t ))=+1 \}\\
& \cup \{(t,rel)\in \dt_k(X,h_v) | \; \sgn(h_v^A, p(t ))=-1 \}.
\end{align*}

Since $\partial B=X\sqcup S_R$ we can again apply Proposition \ref{prop:critM} (now with $B$ playing the role of $M$) to say 
\begin{align*}
\bt_k(B,h_v)=&\{(t,ord)\in \bt_k(X\sqcup S_R,h_v)| \sgn(h_v^B,p(t))= +1\} \\
&\cup \{(t,rel)\in \bt_k(X\sqcup S_R,h_v)| \sgn(h_v^B,p(t))= -1\}\\
\dt_k(B,h_v)= & \{(t,ord)\in \dt_k(X \sqcup S_R,h_v) \sgn(h_v^B,p(t))= +1\}\\
& \cup \{(t,rel)\in \dt_k(X\sqcup S_R ,h_v)| \sgn(h_v^B,p(t))= -1\}.
\end{align*}

The critical points of $h_v^B$ which lie on $S_R$ are well understood. There are two critical points; one birth in dimension $0$ at $p_1=-Rv$ with value $h_v(p_1)=-R$, and a death in dimension $0$ at $p_2=Rv$ with $h_v(p_2)=R$. We thus can rewrite the birth and death sets of $B$ as 
\begin{align*}
\bt_0(B,h_v)=\{(-R,ord)\}&\cup\{(t,ord)\in \bt_0(X,h_v)| \sgn(h_v^B,p(t))= +1\} \\
&\cup \{(t,rel)\in \bt_0(X,h_v)| \sgn(h_v^B,p(t))= -1\}\\
\dt_0(B,h_v)=\{(R,rel)\} &\cup \{(t,ord)\in \dt_0(X,h_v) \sgn(h_v^B,p(t))= +1\}\\
& \cup \{(t,rel)\in \dt_0(X,h_v)| \sgn(h_v^B,p(t))= -1\}.
\end{align*}
and for $k>0$ we have 
\begin{align*}
\bt_k(B,h_v)=&\{(t,ord)\in \bt_k(X,h_v)| \sgn(h_v^B,p(t))= +1\} \\
&\cup \{(t,rel)\in \bt_k(X,h_v)| \sgn(h_v^B,p(t))= -1\}\\
\dt_k(B,h_v)= & \{(t,ord)\in \dt_k(X,h_v) \sgn(h_v^B,p(t))= +1\}\\
& \cup \{(t,rel)\in \dt_k(X,h_v)| \sgn(h_v^B,p(t))= -1\}.
\end{align*}

Since every critical point $p(t)\in X$ must be either $(+)$-critical or $(-)$-critical, by taking the union we get the statement of the proposition.
\end{proof}

Propositions~\ref{prop:critM} and~\ref{prop:sametimes} have shown how the sets of birth and death parameters for $X$, $A$, and $B$ are related. 
The following theorem proves the much stronger result that the pairing of endpoints of the bars is consistent, and so we have isomorphisms between various extended persistence modules. 
This theorem is not a new result -- it was proved using Alexander duality in \cite{edelsbrunner_alexander_2012}.  
We believe our Morse-theoretic proof may be more readily adapted to other scenarios. 

\begin{theorem}\label{thm:oplus}
We have $$\XPH_k(X,v) \oplus \XPH_k(A\cup B,v)= \XPH_k(A,v)\oplus \XPH_k(B,v).$$ 
That is 
$\XPH_0(X,v))\oplus \mathcal{I}_{((-R,ord),(R,rel))}= \XPH_0(A,v)\oplus \XPH_0(B,v)$
and for $k>0$  we have
$\XPH_k(X,v))=\XPH_k(A,v)\oplus \XPH_k(B,v).$
\end{theorem}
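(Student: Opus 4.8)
The plan is to realise the claimed relation as the output of a Mayer--Vietoris long exact sequence of persistence modules over $\Theta$, combined with a direct computation of $\XPH_\bullet(A\cup B,v)$ and an explicit splitting in homological degree zero.

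To obtain a Mayer--Vietoris sequence living over all of $\Theta$ at once, I would first package the sublevel and superlevel data uniformly: for $S\subseteq\overline{B(0,R)}$ and $\theta\in\Theta$ set $(Y_S^\theta,Z_S^\theta)=(S_s,\emptyset)$ when $\theta=(s,\ord)$ and $(Y_S^\theta,Z_S^\theta)=(S,S^s)$ when $\theta=(s,\rel)$, so that $\XPH_k(S,v)_\theta=H_k(Y_S^\theta,Z_S^\theta)$ and every transition map is induced by an inclusion of pairs; checking the three cases in the order on $\Theta$ shows this reproduces the extended persistence module exactly. Taking $S\in\{X,A,B\}$ with $A\cup B=\overline{B(0,R)}$ and $A\cap B=X$, one has $Y^\theta_{A\cup B}=Y_A^\theta\cup Y_B^\theta$, $Z^\theta_{A\cup B}=Z_A^\theta\cup Z_B^\theta$, and $Y_A^\theta\cap Y_B^\theta=Y_X^\theta$, $Z_A^\theta\cap Z_B^\theta=Z_X^\theta$ for every $\theta$ in both the ordinary and the relative regimes. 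Since the triads involved are excisive (all the spaces being triangulable, or CW), the relative Mayer--Vietoris sequence applies and is natural in the inclusions $\theta\le\theta'$, hence assembles into a long exact sequence of persistence modules over $\Theta$,
\[\cdots\to\XPH_k(X,v)\to\XPH_k(A,v)\oplus\XPH_k(B,v)\to\XPH_k(A\cup B,v)\to\XPH_{k-1}(X,v)\to\cdots.\]

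Next I would compute $\XPH_\bullet(A\cup B,v)=\XPH_\bullet(\overline{B(0,R)},v)$ by hand. Every sublevel set $\overline{B(0,R)}_s$ and every superlevel set $\overline{B(0,R)}^s$ is convex, hence empty or contractible, so $H_k(\overline{B(0,R)}_s)=0$ and $H_k(\overline{B(0,R)},\overline{B(0,R)}^s)=0$ for all $k\ge1$, giving $\XPH_k(A\cup B,v)=0$ for $k\ge1$. In degree $0$ one reads off a single essential bar, born at $(-R,\ord)$ (when the global minimum $-Rv$ of $h_v$ over $\overline{B(0,R)}$ enters the sublevel set) and dying at $(R,\rel)$ (when the superlevel set first becomes nonempty), i.e.\ $\XPH_0(A\cup B,v)=\mathcal{I}_{((-R,\ord),(R,\rel))}$. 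Substituting into the long exact sequence, for each $k\ge1$ the relevant four-term segment reads $0\to\XPH_k(X,v)\to\XPH_k(A,v)\oplus\XPH_k(B,v)\to0$ (using also $\XPH_{k+1}(A\cup B,v)=0$), so the middle map is an isomorphism of persistence modules, which is the statement for $k>0$.

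For $k=0$ the long exact sequence yields a short exact sequence of persistence modules
\[0\to\XPH_0(X,v)\to\XPH_0(A,v)\oplus\XPH_0(B,v)\xrightarrow{\ \pi\ }\mathcal{I}_{((-R,\ord),(R,\rel))}\to0,\]
and the whole argument reduces to splitting this; this is the step I expect to be the main obstacle, since a short exact sequence of persistence modules with an interval module as quotient need not split in general. The splitting here is geometric. The sphere $S_R$ is a connected subset of $B$, so the connected component $B_\star$ of $B$ that contains $S_R$ contains both poles $\pm Rv$, in particular the point $-Rv$. I would define a section $s$ on $\mathcal{I}_{((-R,\ord),(R,\rel))}$ by sending the generator (at each parameter where the module is nonzero) to the class of the component of $-Rv$ in $(Y_B^\theta,Z_B^\theta)$, viewed inside the $B$-summand of the middle term. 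This is a morphism of persistence modules: within the ordinary range it is just the inclusion-induced consistency of the component of $-Rv$; when the parameter $s$ drops to $R$ in the relative range, the point $Rv\in B_\star$ lies in $B^s$, so the class of the component of $-Rv$ becomes $0$ in $H_0(B,B^s)$, matching the vanishing of the interval module there. Finally $\pi\circ s=\id$ because the component of $-Rv$ in $B$ maps onto the unique generator of the corresponding term of $\XPH_0(\overline{B(0,R)},v)$. Hence the sequence splits, so $\XPH_0(A,v)\oplus\XPH_0(B,v)\cong\XPH_0(X,v)\oplus\mathcal{I}_{((-R,\ord),(R,\rel))}$, completing the proof; as a consistency check, the resulting multisets of endpoints agree with Proposition~\ref{prop:sametimes}.
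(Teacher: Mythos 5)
Your proof is correct, and although both arguments begin with Mayer--Vietoris, yours takes a genuinely different route from the paper's. The paper extracts from the (ordinary and relative) Mayer--Vietoris sequences only the pointwise injectivity of $\varphi:\XPH_k(X,v)\to\XPH_k(A,v)\oplus\XPH_k(B,v)$, then invokes the Bauer--Lesnick induced-matching theorem (an injective morphism matches each interval $[b,d)$ to an interval $[b',d)$ with the same death and $b'\le b$), combines this with the equality of birth and death multisets proved separately in Proposition~\ref{prop:sametimes}, and closes with the combinatorial observation that a bijection $f$ of a finite set with $f(s)\le s$ must be the identity. You instead assemble the relative Mayer--Vietoris sequences into a single long exact sequence of persistence modules over $\Theta$, compute $\XPH_\bullet(A\cup B,v)$ outright from convexity of the sublevel and superlevel sets of the ball, read off the $k>0$ case directly from exactness, and for $k=0$ split the resulting short exact sequence with an explicit geometric section (the component of $-Rv$ in $B$). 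Your approach buys independence from both Proposition~\ref{prop:sametimes} and the induced-matching machinery, and you correctly identify and resolve the one genuinely delicate step: the section is a morphism of persistence modules precisely because the class of the component containing $S_R$ vanishes in $H_0(B,B^s)$ exactly where the interval module $\mathcal{I}_{((-R,\ord),(R,\rel))}$ vanishes, namely once $s\le R$. The paper's approach buys the converse convenience: it never needs naturality of the connecting homomorphisms across the junction between the ordinary and relative parameter ranges, only the pointwise exact sequences, and the same matching argument is reused verbatim for $k=0$. Two details worth making explicit in a write-up: the triads must be excisive (guaranteed here by the PL/collar structure of $X=\partial A$ inside $\overline{B(0,R)}$, the same hypothesis the paper uses tacitly in Proposition~\ref{prop:critM}), and the Mayer--Vietoris map onto $H_0(A\cup B)$ carries a sign on one summand, so your section satisfies $\pi\circ s=\pm\id$; this is harmless for the splitting after rescaling.
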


\begin{proof}
Let us first consider the case where $k>0$. Since $X\subset A$ and $X\subset B$ we have an induced morphisms on persistence modules $$\varphi:\XPH_k(X,v)\to\XPH_k(A,v)\oplus \XPH_k(B,v).$$ Furthermore from the ordinary and relative versions of the Mayer-Vietoris sequence we know $\varphi_{(t,ord)}$, and $\varphi_{(t,rel)}$ are both isomorphisms for all $t\in \R$. This implies that $\varphi$ is must be injective. 

Injective morphisms between persistence modules were studied extensively in \cite{bauer2014induced}. Bauer and Lesnick showed that an injective morphism will induce a injective map $\rho$ on the sets of intervals in the interval decomposition of $\XPH_k(X,v)$ to those in the interval decomposition of $\XPH_k(A,v)\oplus \XPH_k(B,v)$ such that every interval in $[b,d) $ in $\XPH_k(X,v)$ is mapped to an interval $\rho([b,d))=[b',d)$ with the same death time and $b'\leq b$.

By Proposition \ref{prop:sametimes} we know that the sets of start and end parameters for the barcode decompositions satisfy  
$\bt_k(X,v)=\bt_k(A,v)\cup \bt_k(B,v)$. As the two persistence modules have the same number of intervals, the matching $\rho$ must in fact be a bijection. Observe that if $f:S\to S$ is a bijection from a finite set to itself such that $f(s)\leq s$ for all $s\in S$ then we are forced to have $f$ the identity. This argument shows $\rho([b,d))=\rho([b,d))$ and the interval decompositions of $\XPH_k(X,v)$ and $\XPH_k(A,v)\oplus \XPH_k(B,v)$ are the same and they are isomorphic as persistence modules.

For the case where $k=0$  we need to consider the complication of the homology class corresponding to the sphere $S_R$. 
We know from Proposition \ref{prop:sametimes} that $\bt_0(X,v)\sqcup\{(-R,ord)\}=\bt_0(A,v) \sqcup \bt_0(B,v)$, which we will denote $\bt$, and $\dt_0(X,v)\sqcup\{(R,ord)\}=\dt_0(A,v) \sqcup \dt_0(B,v)$, which we will denote $\dt$. This means that we can define a bijection $\rho:\bt\to \bt$ such that $\rho (b)=b'$ if there exists a $d$ such that $$(b,d]\in \XPH_0(X,v))\oplus \mathcal{I}_{((-R,ord),(R,rel))}$$ and $$(b',d]\in \XPH_k(A,v)\oplus \XPH_k(B,v).$$
Observe that $[(-R, ord), (R,rel))$ is an interval in the interval decomposition of $\XPH_0(B,v)$ - this corresponds to the connected component containing $S_R$. This implies that $\rho((-R, ord))=(-R,ord)$. 
Just as in the case for $k>0$ we can consider the ordinary and relative Mayer Vietoris sequences to show that the morphisms $H_0(X_t)\to H_0(A_t)\oplus H_0(B_t)$ and $H_0(X,X^t) \to H_0(A,A^t)\oplus H_0(B,B^t)$ induced by inclusions are injective for all $t$ and hence the morphism $\varphi:\XPH_0(X,v)\to\XPH_0(A,v)\oplus \XPH_0(B,v)$ is injective. Again this implies there is an injective map which pairs each interval $[b,d) $ in $\XPH_0(X,v)$  to an interval $[b',d)$ with the same death time and $b'\leq b$. This implies that our function $\rho:\bt\to \bt$ has $\rho(b)\leq b$ for all $b\in \bt_0(\XPH_0(X))$. Together these imply $\rho(b)\leq b$ for all $b\in \bt$ which, since $\bt$ is finite, implies $\rho$ is the identity. 
Hence the interval decompositions of $\XPH_0(X,v) \oplus \mathcal{I}_{[(-R,ord),(R,rel))}$ and $\XPH_0(A,v)\oplus \XPH_0(B,v)$ are the same and they are isomorphic as persistence modules.
\end{proof}

Combining Theorem \ref{thm:oplus} with Proposition \ref{prop:sametimes} allows us to express the extended persistent homology of a height function over $A$ as a nice submodule of the extended persistent homology of that same height over $\partial A$.

\begin{theorem}\label{thm:submodule}
Let $A\subset \R^n$ be an $n$-manifold with boundary $X=\partial A$. Let $v$ be a direction such that $h_v^A:A\to \R$ is a Morse function.
Let the interval decomposition of the $k$-dimensional extended persistent homology of $h_v^X:X\to \R$ be 
$$\XPH_k(X,h_v)=\bigoplus_{[b_i,d_i)\in S_X} \mathcal{I}_{[b_i,d_i)}.$$ 

Let $J_A^k$ be the subset of intervals $[b_i,d_i)$ such that $b_i=(h_v(p) , \ord)$ for some $p\in \Crit(h_v^A, (k,+1))$, or $b_i=(h_v(p) , \rel)$ for some $p\in \Crit(h_v^A, (n-k-1,-1))$. Then 
$$\XPH_k(A,h_v)=\bigoplus_{[b_i,d_i)\in J_A^k} \mathcal{I}_{[b_i,d_i)}.$$
\end{theorem}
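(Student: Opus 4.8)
The plan is to combine Theorem~\ref{thm:oplus} with the characterisation of birth and death parameters from Proposition~\ref{prop:critM}, exploiting the fact that an interval decomposition is unique (Theorem~\ref{thm:intervaldecomposition}). From Theorem~\ref{thm:oplus} we already know that for $k>0$,
$$\XPH_k(X,h_v) = \XPH_k(A,h_v)\oplus\XPH_k(B,h_v),$$
and for $k=0$ the analogous statement holds after adjoining the interval $\mathcal{I}_{[(-R,\ord),(R,\rel))}$ to $\XPH_0(B,h_v)$ (or to the right-hand side). By uniqueness of the interval decomposition, the multiset $S_X$ of intervals of $\XPH_k(X,h_v)$ is the disjoint union of the multiset of intervals of $\XPH_k(A,h_v)$ and the multiset of intervals of $\XPH_k(B,h_v)$ (up to the $S_R$-correction when $k=0$). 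So the whole task reduces to identifying \emph{which} intervals of $S_X$ come from the $A$-summand, and showing this subset is exactly $J_A^k$.

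First I would use Corollary~\ref{lem:ord} applied to $A$: it tells us that $\bt_k^{\ord}(A,h_v)\cup\dt^{\ord}_{k-1}(A,h_v)$ consists of the parameters $(h_v(p),\ord)$ with $p\in\Crit(h_v^A,k)\cup\Crit(h_v^A,(k,+1))$, and since $h_v$ has no interior critical points (it is the height function on a subset of $\R^n$, so $\Crit(h_v^A,k)=\emptyset$), this is exactly $\{(h_v(p),\ord): p\in\Crit(h_v^A,(k,+1))\}$. Similarly $\bt_k^{\rel}(A,h_v)\cup\dt_{k-1}^{\rel}(A,h_v) = \{(h_v(p),\rel): p\in\Crit(h_v^A,(n-k-1,-1))\}$. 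Thus the birth parameters of intervals in $\XPH_k(A,h_v)$ are precisely those $(t,\ord)$ with $p(t)\in\Crit(h_v^A,(k,+1))$ together with those $(t,\rel)$ with $p(t)\in\Crit(h_v^A,(n-k-1,-1))$ — which is exactly the defining condition for membership in $J_A^k$. (One should take a moment to note that $\bt_k^{\ord}$ and $\dt^{\ord}_{k-1}$ are disjoint, and likewise in the relative range, since a given critical value is either a birth or a death in a fixed dimension but not both — this is what lets us read off birth parameters cleanly; alternatively, this bookkeeping is already packaged in Proposition~\ref{prop:critM}, which directly states $\bt_k^{\ord}(A,h_v) = \{(t,\ord)\in\bt_k^{\ord}(X,h_v): \sgn(h_v^A,p(t))=+1\}$ and the three companion equalities.)

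The key step is then: each interval in $S_X$ is uniquely determined by its birth parameter (within a fixed homology dimension $k$, since no two intervals share a birth — again because a critical value produces a single homological event), so the partition of $S_X$ into the $A$-part and the $B$-part induced by the direct-sum decomposition of Theorem~\ref{thm:oplus} is governed entirely by the partition of birth parameters $\bt_k(X,v) = \bt_k(A,v)\sqcup\bt_k(B,v)$ from Proposition~\ref{prop:sametimes}. Since Proposition~\ref{prop:critM} identifies $\bt_k(A,v)$ as precisely the birth parameters with the appropriate sign condition on $p(t)$ relative to $A$, the subset of $S_X$ landing in the $A$-summand is exactly $\{[b_i,d_i)\in S_X : b_i\in\bt_k(A,v)\} = J_A^k$. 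Combined with the decomposition of Theorem~\ref{thm:oplus}, $\XPH_k(A,h_v) = \bigoplus_{[b_i,d_i)\in J_A^k}\mathcal{I}_{[b_i,d_i)}$, as claimed. For the $k=0$ case one extra line is needed: the interval $\mathcal{I}_{[(-R,\ord),(R,\rel))}$ lives in the $B$-summand (it is the component containing $S_R$), its birth $(-R,\ord)$ is not in $\bt_0(X,v)$, and so it does not interfere with the matching of $S_X$-intervals; the argument then proceeds identically.

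The main obstacle I anticipate is the uniqueness-of-birth bookkeeping — i.e.\ justifying that within a fixed homology dimension $k$ no two intervals of $\XPH_k(X,h_v)$ share a birth parameter, so that ``the set of intervals in the $A$-summand'' is genuinely pinned down by ``the set of birth parameters coming from $A$''. This follows because $h_v^X$ is Morse (distinct critical values) and each critical point contributes a single rank-one change to the relevant relative homology group by Proposition~\ref{prop:strongMorse}/Corollary~\ref{cor:MorseRel}, but it needs to be stated carefully, especially across the ordinary/relative boundary where the ``birth'' of an essential class is an ordinary parameter and its ``death'' a relative one. Everything else is a direct assembly of Theorem~\ref{thm:oplus}, Proposition~\ref{prop:sametimes}, Proposition~\ref{prop:critM}, and Corollary~\ref{lem:ord}, together with the vanishing of interior critical points of a height function.
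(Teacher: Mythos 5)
Your proposal is correct and follows essentially the same route as the paper, which presents Theorem~\ref{thm:submodule} as a direct consequence of combining Theorem~\ref{thm:oplus} with Propositions~\ref{prop:sametimes} and~\ref{prop:critM}. In fact you supply more detail than the paper does (which gives no explicit proof), in particular the bookkeeping that distinct critical values of the Morse function $h_v^X$ guarantee no two intervals of $\XPH_k(X,h_v)$ share a birth parameter, so the split of $S_X$ into the $A$- and $B$-summands is indeed pinned down by the partition of birth parameters.
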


We can more readily describe the essential classes in dimensions $0$ and $n-1$ in terms of the minimum and maximum values on the different connected components of the boundary. Observe that a compact connected $(n-1)$-dimensional manifold $Y$ embedded in $\R^n$  separates $\R^n$ into two connected open sets, one of which is `inside' $Y$ and one of which is `outside' (this is the unbounded component of the two). This theorem is known as the Jordan-Brouwer separation theorem.  
We  use this to define the connected components of $X=\partial A$ as interior or exterior boundary components.

\begin{definition}
Let $A\subset \R^n$ be a compact $n$-dimensional manifold with boundary $\partial A=X$. Let $\hat{X}$ be a connected component of $X$, and $\hat{A}$ the connected component of $A$ that contains $\hat{X}$. 
We say that $\hat{X}$ is an \emph{interior boundary component} if $\hat{A}\backslash \hat{X}$ is contained in the unbounded connected component of $\R^n\backslash \hat{X}$. We say that $\hat{X}$ is an \emph{exterior boundary component} if $\hat{A}\backslash \hat{X}$ is contained in the bounded connected component  of $\R^n\backslash \hat{X}$.
\end{definition}

\begin{proposition}\label{prop:ess}
Let $A\subset \R^n$ be an $n$-manifold with boundary $X=\partial A$. Let $v$ be a direction such that $h_v^A:A\to \R$ is a Morse function.
Let $\{X_1, \ldots X_k\}$ be the interior boundary components of $X$ and $\{Y_1, \ldots Y_l\}$ be the exterior boundary components of $X$. Then 
$$\Ess_0(A, h_v)=\sum_{j=1}^l \mathcal{I}_{[(\min\{h_v(Y_j)\}, \ord),(\max\{h_v(Y_j)\}, \rel))}$$
and 
$$\Ess_{n-1}(A, h_v)=\sum_{i=1}^k \mathcal{I}_{[(\max\{h_v(X_i)\}, \ord),(\min\{h_v(X_i)\}, \rel))}.$$
\end{proposition}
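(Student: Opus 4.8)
The plan is to realise $\Ess_0(A,h_v)$ and $\Ess_{n-1}(A,h_v)$ as sub-direct-sums of the extended persistence modules of the boundary $X=\partial A$ via Theorem~\ref{thm:submodule}, and then to decide which summands survive by a local analysis at the extrema of $h_v$ on each boundary component. I would first record the essential classes of one component. Since $X$ is a disjoint union of closed connected $(n-1)$-manifolds and extended persistence of a disjoint union is the direct sum of the extended persistence modules of the pieces, it suffices to treat a single connected closed hypersurface $Z\subset\R^n$. The essential degree-zero class of $Z$ is born at the global minimum of $h_v^Z$ in the ordinary range and, being essential, persists into the relative part, where it dies once the superlevel set of $Z$ becomes non-empty, i.e.\ at the global maximum; thus $\Ess_0(Z,h_v)=\mathcal{I}_{[(\min h_v(Z),\ord),(\max h_v(Z),\rel))}$. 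Applying the duality bijection between essential classes in complementary dimensions used in the proof of Proposition~\ref{prop:symmetry} (valid because $Z$ is a closed manifold of dimension $n-1$), namely $\mathcal{I}_{[(b,\ord),(d,\rel))}\mapsto\mathcal{I}_{[(d,\ord),(b,\rel))}$, yields $\Ess_{n-1}(Z,h_v)=\mathcal{I}_{[(\max h_v(Z),\ord),(\min h_v(Z),\rel))}$.

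Next I would invoke Theorem~\ref{thm:submodule}: $\XPH_m(A,h_v)$ is the direct sum of those interval modules of $\XPH_m(X,h_v)=\bigoplus_Z\XPH_m(Z,h_v)$ whose birth parameter lies in $J_A^m$. An interval module is essential exactly when it has an ordinary birth together with a relative death, so $\Ess_m(A,h_v)$ is the direct sum of those $\Ess_m(Z,h_v)$, over components $Z$ of $X$, whose ordinary birth parameter $(h_v(p_Z),\ord)$ satisfies $p_Z\in\Crit(h_v^A,(m,+1))$; here $p_Z$ is the point where $h_v^Z$ attains its minimum when $m=0$ and its maximum when $m=n-1$. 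Note $p_Z$ is automatically a non-degenerate critical point of $h_v^A$ on the boundary, of index $(0,\eta)$ (resp.\ $(n-1,\eta)$) for some $\eta\in\{\pm1\}$, because $h_v$ has no interior critical points and a global extremum of $h_v|_{\partial A}$ is a non-degenerate critical point of $h_v|_{\partial A}$; it is unique because $h_v^A$ is Morse. So everything reduces to computing $\eta$ at $p_Z$.

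The crux is the claim that, for a boundary component $Z$ with minimiser $p$ and maximiser $q$ of $h_v^Z$: $Z$ is exterior iff $p$ is $(+)$-critical iff $q$ is $(-)$-critical, and $Z$ is interior iff $p$ is $(-)$-critical iff $q$ is $(+)$-critical. To see this, take a chart around $p$ with $v$ as the first coordinate direction: since $p$ minimises $h_v^Z$, locally $Z$ is a graph $x_1=g(x_2,\dots,x_n)$ with $g\ge0$, so a small ball about $p$ is split by $Z$ into an ``upper'' part $\{x_1>g\}$ and a ``lower'' part $\{x_1<g\}$. The downward ray $\{p-tv:t>0\}$ has height strictly below $\min h_v(Z)$, hence never meets $Z$, so the lower part lies in the unbounded component of $\R^n\setminus Z$ and (by Jordan--Brouwer) the upper part in the bounded component. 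By connectedness of $\hat A\setminus Z$, where $\hat A$ is the component of $A$ containing $Z$, the interior of $A$ near $p$ coincides with exactly one of these two parts: the bounded-side (upper) part when $Z$ is exterior, the unbounded-side (lower) part when $Z$ is interior. In the first case moving from $p$ into $A$ increases $x_1$, hence $h_v$, so $\eta=+1$; in the second it decreases $h_v$, so $\eta=-1$. The same picture at $q$, where $Z$ lies locally below its horizontal tangent hyperplane, gives the complementary signs. The argument is purely topological, so it applies unchanged in the PL case, reading $\eta$ off the normal form $f\circ\phi^{-1}=f(p)+\eta x_1+|x_2|+\dots+|x_n|$ of a $(0,\eta)$-critical point.

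Assembling: the exterior components are $Y_1,\dots,Y_l$ and the interior components are $X_1,\dots,X_k$, so by the previous two paragraphs $\Ess_0(A,h_v)=\bigoplus_{j=1}^l\Ess_0(Y_j,h_v)=\bigoplus_{j=1}^l\mathcal{I}_{[(\min h_v(Y_j),\ord),(\max h_v(Y_j),\rel))}$ and $\Ess_{n-1}(A,h_v)=\bigoplus_{i=1}^k\Ess_{n-1}(X_i,h_v)=\bigoplus_{i=1}^k\mathcal{I}_{[(\max h_v(X_i),\ord),(\min h_v(X_i),\rel))}$, as claimed. I expect the third paragraph to be the only genuine obstacle: translating the global Jordan--Brouwer interior/exterior dichotomy into the local sign of the boundary critical point, and being careful that the side of $Z$ occupied by $A$ near an extremum is correctly identified with the bounded or unbounded complementary component. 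The rest is bookkeeping on top of Theorem~\ref{thm:submodule} and the duality results already established.
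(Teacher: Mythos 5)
Your proposal is correct and follows essentially the same route as the paper: compute the single essential class of each connected boundary component (born at the global minimum, dying at the global maximum, with the degree-$(n-1)$ version obtained by symmetry), then use Theorem~\ref{thm:submodule} to keep exactly those whose birth extremum is a $(+)$-critical point of $h_v^A$, which is decided by the interior/exterior dichotomy. Your third paragraph supplies a careful local Jordan--Brouwer argument for the sign determination that the paper merely asserts, but the overall structure is the same.
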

\begin{proof}
If $A$ is the disjoint union of $A_1, \ldots A_l$ then $\XPH(A)=\oplus_{i=1}^l \XPH(A_i)$. This means that it is sufficient to prove the case where $A$ is connected. We assume $A$ is connected for the remainder of the proof.

Observe that for $M$ a connected $(n-1)$-dimensional manifold we have $\beta_0(M)=1= \beta_{n-1}(M)$ so there is exactly one essential persistent homology interval module in each of these homology dimensions for extended persistent homology of $M$ with respect to $h_v$. 

The interval in $\Ess_0(M,h_v)$ is born at the first appearance of $M$, that is at $(\min\{h_v(M)\}, \ord)$. Since $M$ is connected we have this homology class is trivial in $H_0(M, L)$ for any non-empty subset $L\subset M$. This implies that the death of this interval in $\Ess_0(M, h_v)$ is at parameter $(\max\{h_v(M)\}, \rel)$. We have shown that 
$$\Ess_0(M,h_v)=\mathcal{I}_{[(\min\{h_v(M)\}, \ord),(\max\{h_v(M)\}, \rel))}.$$
Using the symmetry of extended persistent homology for manifolds (Proposition \ref{prop:symmetry}) we have 
$$\Ess_{n-1}(M,h_v)=\mathcal{I}_{[(\max\{h_v(M)\}, \ord),(\min\{h_v(M)\}, \rel))}.$$

Since $X$ is the disjoint union of the interior boundary components $\{X_1, \ldots X_k\}$ and the exterior boundary component $Y$ we have
\begin{align*}
\Ess_0(X,h_v)=&\big(\oplus_{i=1}^n \mathcal{I}_{[(\min\{h_v(X_i)\}, \ord),(\max\{h_v(X_i)\}, \rel))}\big)\\
&\oplus \mathcal{I}_{[(\min\{h_v(Y)\}, \ord),(\max\{h_v(Y)\}, \rel))}
%\big(\oplus_{j=1}^l \mathcal{I}_{[(\min\{h_v(Y_j)\}, \ord),(\max\{h_v(Y_j)\}, \rel))}\big)
\end{align*}
and 
\begin{align*}
\Ess_{n-1}(X,h_v)=&\big(\oplus_{i=1}^n \mathcal{I}_{[(\max\{h_v(X_i)\}, \ord),(\min\{h_v(X_i)\}, \rel))}\big)\\
&\oplus \mathcal{I}_{[(\max\{h_v(Y)\}, \ord),(\min\{h_v(Y_j)\}, \rel))}\big).
\end{align*}

We can use Theorem \ref{thm:submodule} to deduce $\Ess_0(A, h_v)$ and $\Ess_{n-1}(A, h_v)$ from the various persistence modules $\Ess_0(X_i,h_v), \Ess_{n-1}(X_i, h_v), \Ess_0(Y, h_v)$ and $\Ess_{n-1}(Y, h_v)$.

Consider an interior boundary component $X_i$, and let $p_i\in X_i$ be the global minimum of $h_v^{X_i}$. We know that $A$ is contained in the infinite component of $\R^n\backslash X_i$, and $p_i$ must be a $(-)$-critical point for $h_v^A$. This implies that ${[(\min\{h_v(X_i)\}, \ord),(\max\{h_v(X_i)\}, \rel))}$ is not included in $J_A^0$ (where $J_A^k$ is defined in the statement of Theorem \ref{thm:submodule}). Similarly let $\hat{x}_i$ denote the global maximum of $h_v$ over $X_i$. We know that $A$ is contained in the infinite component of $\R^n\backslash X_i$, and $\hat{p}_i$ must be a $(+)$-critical point for $h_v^A$. This implies that  
$${[(\max\{h_v(X_i)\}, \ord),(\min\{h_v(X_i)\}, \rel))}\in J_A^{n-1}.$$  

If we instead consider the exterior boundary component $Y$ then the global minimum of $h_v^{Y}$ will be a $(+)$-critical point for $h_v^A$ and the global maximum of $h_v^{Y}$ will be a $(-)$-critical point for $h_v^A$. This implies that 
$${[(\min\{h_v(Y)\}, \ord),(\max\{h_v(Y)\}, \rel))}\in J_A^{0}$$
but we do not include $ {[(\max\{h_v(Y)\}, \ord),(\min\{h_v(Y)\}, \ord))}$ in $J_A^{n-1}$.

\end{proof}

\section{The Extended Persistent Homology Transform}
\label{sec:xpht} 
\subsection{Background}

The persistent homology transform (PHT) maps the space of shapes embedded in Euclidean space into a space of topological summaries. Instead of comparing the original shapes we can compare their topological transforms. The philosophy is that the persistent homology of a height function in some direction $v$ records geometric information from the perspective of direction $v$. As $v$ changes, the persistent homology classes track geometric features in $M$. The key insight behind the persistent homology transform (PHT) is that by considering the persistent homology from every direction, we preserve all information about the shape.

Before giving the formal definition we should first identify the subsets of space which are allowable shapes, that is the domain of the PHT. We will want our subsets to be reasonably nice. The most general setting for which theoretical properties about the PHT are proved are compact o-minimal sets, which are called \emph{constructible} in \cite{curry2018many}. For the purposes of this paper it is sufficient to know that compact and semi-algebraic or piecewise linear are sufficient conditions for a subset of Euclidean space to be constructible. We will denote the space of constructible subsets of $\R^n$ by $\CS(\R^n)$.

Given an constructible set $M\subset \R^n$, and $v\in S^{n-1}$, let $h_v$ be the corresponding to a height function in direction $v$,
\begin{align*}
h_v&:M \to \R \\
h_v&:x\mapsto \langle x, v\rangle.
\end{align*}
where $\langle\cdot,\cdot\rangle$ denotes the inner product.
We can construct a persistence module $\PH_k(M,h_v)$ by filtering $M$ by the sub-level sets of $h_v$ and taking $k$-dimensional homology groups. The underlying parameter set for the persistence module is $\R$, the attached vector space at $t\in \R$ is $H_k(h_v^{-1}(-\infty,t])$, and for $s\leq t$ the transition map $\varphi_s^t$ is the induced map on homology from the inclusion $h_v^{-1}(-\infty,s] \subseteq h_v^{-1}(-\infty,t]$.

Let $\PM(\mathbb{R})$ denote the standard space of persistence modules over parameter space $\mathbb{R}$.

\begin{definition}
The \emph{Persistent Homology Transform} $\PHT$ of a constructible set $M\in\CS(\R^n)$ is the map $\PHT(M): S^{n-1} \to \PM(\mathbb{R})^n$ that sends a direction to the set of persistence modules by filtering $M$ in the direction of $v$:
\[
	\PHT(M): v \mapsto \left(\PH_0(h_v^M),\PH_1(h_v^M), \ldots, \PH_{n-1}(h_v^M)\right)
\]
where $h_v:M\to \R$, $h_v(x)=\langle x, v\rangle$ is the height function on $M$ in direction $v$.
\end{definition}

Various properties of the PHT have been proved in \cite{turner2014persistent,ghrist2018persistent,curry2018many}. Stability results bound the distance betwen $h_v$ and $h_w$ when $v,w\in S^{n-1}$ are close. This implies that  for each $M\in \CS(R^n)$, its persistent homology transform, $\PHT(M)$, is a continuous function over $S^{n-1}$ when we equip $\PM$ with a Wasserstein metric.

Another very important property about the PHT is its injectivity, that is that for $M_1, M_2 \subset \R^n$, if $\PHT(M_1)=\PHT(M_2)$ then $M_1=M_2$ as subsets of $\R^n$. This was originally proved in~\cite{turner2014persistent} for piecewise linear compact subsets in $\R^2$ and $\R^3$, and then the more general proof was given in~\cite{curry2018many} and independently in~\cite{ghrist2018persistent}.

We can now define a distance between $M_1, M_2$ constructible sets via their persistent homology transforms. We basically just integrate the Wasserstein distances over all the possible directions.

\begin{definition}
Fix $p\in [1,\infty)$ and ambient dimension $n$. Define the distance function $d^{PHT}_p: \CS(\R^n)\times \CS(\R^n) \to \R$ by
$$d^{PHT}_p(M_1, M_2)^p= \int_{v\in S^{n-1}} \sum_{k=0}^{n-1}W_p(\PH_k(M_1, h_v), \PH_k(M_2, h_v))^p \,dv.$$
\end{definition}

\subsection{Now with extended persistence}

We can define a new distance function over $\CS(\R^n)$ by replacing the normal persistent homology with extended persistent homology.
We can construct a definition of a distance between extended persistent homology transforms by replacing the Wasserstein distance between the original persistence modules with those between extended persistence modules.

\begin{definition}
Fix $p\in [1,\infty)$ and ambient dimension $n$. Define the distance function $d^{XPHT}_p: \CS(\R^n)\times \CS(\R^n) \to \R$ by
$$d^{XPHT}_p(M_1, M_2)^p= \int_{v\in S^{n-1}} \sum_{k=0}^{n-1}W_p(\XPH_k(M_1, h_v), \XPH_k(M_2, h_v))^p \,dv.$$
\end{definition}

For the $\PHT$ one theoretical result was the continuity of the $\PHT(M)$ as a function from $S^{n-1}$. This continuity justified the approximation of the PHT by a finite subset of directions. The proofs for the continuity of the PHT can be easily modified to show continuity of the $\XPHT$. Let $\mathscr{E}$ denote the space of extended persistence modules. Then for all $M\in \CS(\R^n)$, the function $\XPHT_k(M): S^{n-1} \to \mathscr{E}$ is continuous when we equip $\mathscr{E}$ with the $p$-Wasserstein distance (for $p\in [1,\infty)$), or the bottleneck distance.

In \cite{skraba2020wasserstein} a stability result for the PHT was proven in the case where $M_1$ and $M_2$ were different embeddings of the same simplicial complex. This bounded the distance between $\PHT(M_1)$ and $\PHT(M_2)$ in terms of the distances between the sets of vertices in the embedding. The proof of this stability theorem can be easily modified to prove an analogous statement for the extended persistent homology transform.

Since the extended persistence module for a filtration by a height function contains strictly more information than the regular persistence module for that height function, the injectivity results for the $\PHT$ will automatically also hold for the $\XPHT$.

\section{Application to binary images} 
\label{sec:Images}

In this section we describe how to interpret a binary digital image as a PL-manifold with boundary, construct boundary curves as PL 1-manifolds, and adapt the results of Section~\ref{sec:morse} to this setting using a simulation of simplicity methodology. 

\subsection{Boundary curves} 
A binary digital image is a two-dimensional array, $P$, with elements called \emph{pixels} taking values in $\{0,1\}$. 
The array is indexed by integers $1 \leq i \leq m$ and $1 \leq j \leq n$, so that $P(i,j)$ is the element in the $i$th row and $j$th column of $P$.   
We can also treat pixels as points in the plane by mapping the array index to a Cartesian coordinate (the first axis is oriented down the page and second from left to right).
Those pixels taking the value `$1$' are defined to be the \emph{foreground} $F := P^{-1}[1]$ and those with value `$0$' are the \emph{background} $G := P^{-1}[0]$.
A small patch of such a binary image array is illustrated in Fig.~\ref{fig:pixelconnectivity}.

 \begin{figure}[h]
    \centering
    \includegraphics[width=0.6 \textwidth]{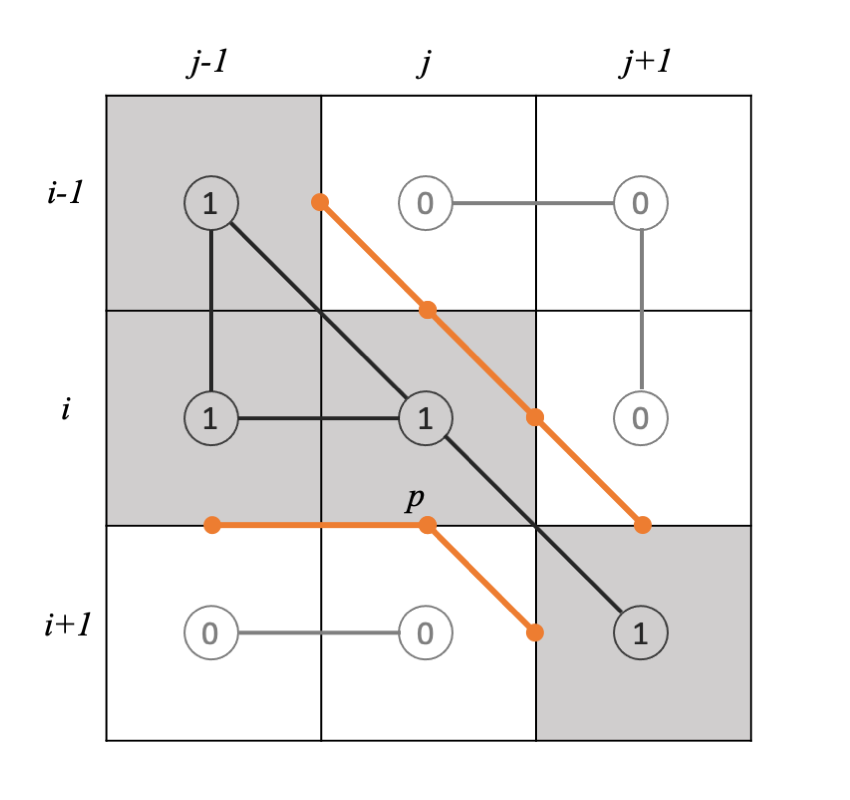} 
\caption{ The rows and columns of a binary digital image are indexed by $i$ and $j$ respectively.  Foreground pixels are labelled `$1$' and connected when 8-adjacent.  Background pixels are labelled `$0$' and connected when 4-adjacent.  Segments of the boundary curves are drawn in orange and the boundary point labelled `$p$' has coordinates $(i+\tfrac{1}{2}, j)$. }
    \label{fig:pixelconnectivity}
\end{figure}

To answer questions about the connectivity of objects represented by the image, we must define a neighbourhood or adjacency relation for each pixel.  Two standard options called 4- and 8-connectivity in digital topology are defined as follows.
 
\begin{definition}
A pixel $(k,l)$ is said to be a \emph{4-adjacent} or \emph{direct} neighbour of $(i,j)$ if their $\ell_1$ distance is exactly 1: 
$\left\vert i-k \right\vert + \left\vert j - k \right\vert = 1$. 
Pixels are \emph{8-adjacent} neighbours if the $\ell_{\infty}$ distance is 1:  $\max \{ \left\vert i-k \right\vert , \left\vert j - k \right\vert \} = 1$.
The 4-neighbourhood of pixel $(i,j)$ consists of its four 4-adjacent neighbours and the 8-neighbourhood is defined similarly.  
 \end{definition}

The connectivity of a set of pixels is then determined according to a specified adjacency relation.  If we choose to use the 8-neighbourhood for pixels in both the foreground and the background, however, counter-intuitive situations may arise such as a simple closed digital curve that does \emph{not} separate the plane into two pieces.  
The resolution of this within digital topology is to treat pixels in the foreground as connected with respect to the 8-neighbourhood and pixels in the background with the 4-neighbourhood, or vice-versa~\cite{kong_digital_1989}. 

We now proceed to construct a set, $\mathcal{C}$, of piecewise-linear curves that subdivide the plane so that each connected component of $\R^2 \setminus \mathcal{C}$ contains pixels of only one type (either foreground or background), and 
such that the digital connected components of $F$ and $G$ are in one-to-one correspondence with those of $\R^2 \setminus \mathcal{C}$. 
As described above, we use 8-connectivity for the foreground and 4-connectivity for the background. 
We assume (and in practice add) a layer of background pixels to any given rectangular array, $P$, to ensure there is a single connected background component surrounding all other components.

\begin{definition}
\emph{Boundary points}.  For every pair of 4-adjacent pixels such that $P(i,j) = 1$ and  $P(k,l) = 0$, define the boundary point $p = ( \tfrac{1}{2}(i+k),  \tfrac{1}{2}(j+l) )$.  
\end{definition} 
There are only four possible configurations.  For example if $(i,j) \in F$ and its direct neighbour $(i+1,j) \in G$, 
then $p = (i +\tfrac{1}{2}, j)$;  the other three cases are simple adjustments to this pattern.  Note that since $(i,j)$ and $(k,l)$ are 4-adjacent, the boundary point has only one coordinate with the $\tfrac{1}{2}$ offset and one remaining an integer.  
See Fig.~\ref{fig:pixelconnectivity} for an illustrative example. 

The next step is to connect pairs of boundary points by line segments in such a way that the foreground and background pixel connectivities are respected.  This is achieved by exhaustive enumeration of $2 \times 2$ pixel patches as illustrated in Fig.~\ref{fig:boundaryedges}.   

\begin{figure}[h]
    \centering
    \includegraphics[width=0.9 \textwidth]{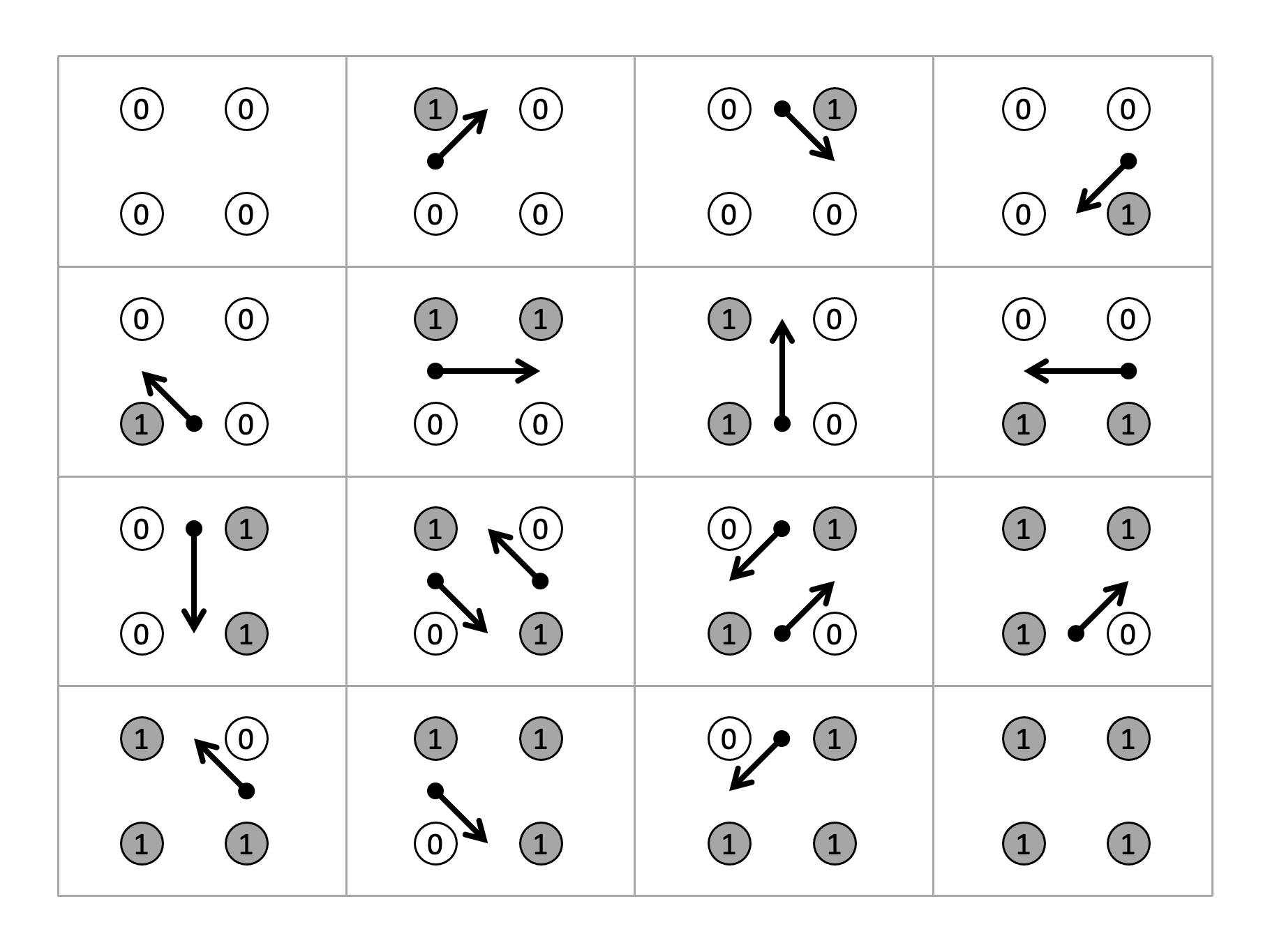} 
\caption{ Each of the $2^4$ possible $2 \times 2$ binary-valued pixel patches showing the associated oriented boundary edges for the case that foreground pixels connect when 8-adjacent. The edge orientation always has the foreground on the left. }
    \label{fig:boundaryedges}
\end{figure}

\begin{lemma}\label{lem:closedcurves}
Let $\mathcal{C} \subset \R^2$ be the union of boundary points and edges derived from a binary digital array $P$. 
The set $\mathcal{C}$ is a disjoint union of simple closed piecewise linear curves. 
\end{lemma}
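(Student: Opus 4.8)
The plan is to verify that $\mathcal{C}$ is a $1$-manifold by a purely local argument, and then to invoke compactness to conclude that each component is a simple closed curve. The key local claim is: every boundary point $p \in \mathcal{C}$ has a neighbourhood in $\mathcal{C}$ homeomorphic to an open interval, i.e.\ exactly two edges of $\mathcal{C}$ are incident to $p$. First I would note that a boundary point $p$ sits at the midpoint of an edge of the integer grid separating a foreground pixel from a $4$-adjacent background pixel, and that $p$ lies on the common boundary of exactly two of the $2\times 2$ pixel patches enumerated in Fig.~\ref{fig:boundaryedges} (the two patches obtained by sliding the $2\times 2$ window along the grid edge through $p$). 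In each of those two patches, the rules of Fig.~\ref{fig:boundaryedges} assign edges, and the crucial bookkeeping step is to check, patch by patch, that each patch contributes exactly one edge-endpoint at $p$ whenever $p$ is a boundary point of that patch. Hence $p$ has degree exactly $2$ in $\mathcal{C}$.

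The step I expect to be the main obstacle is the $2\times 2$ patch with a diagonal configuration --- foreground at, say, positions $(0,0)$ and $(1,1)$ and background at $(0,1)$ and $(1,0)$ --- since this is where the $8$-connectivity-for-foreground convention forces a choice. I would handle it by appealing to the convention stated just before the lemma (foreground $8$-adjacent, background $4$-adjacent) together with the orientation rule "foreground on the left'' from the caption of Fig.~\ref{fig:boundaryedges}: in the diagonal patch the two foreground corners must be joined through the patch, so the patch contributes two edges that pass through the centre of the patch but meet the grid edges at four distinct boundary points, each receiving exactly one endpoint. The symmetric anti-diagonal case is identical. For all non-diagonal patches the contribution is an elementary case check. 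This establishes that $\mathcal{C}$ is a topological $1$-manifold without boundary.

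Next I would use that the grid is finite (together with the assumed surrounding layer of background, so $\mathcal{C}$ is contained in a bounded region) to conclude that $\mathcal{C}$ has finitely many edges, hence is a compact $1$-manifold, hence a finite disjoint union of circles; each such circle, being a subcomplex of the plane built from straight segments meeting only at shared endpoints of degree $2$, is a simple closed PL curve. Disjointness of the components is automatic from being a $1$-manifold. Finally, I would remark that simplicity (no self-intersections beyond the cycle structure) is exactly the degree-$2$ condition already verified, so no separate argument is needed.

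A subtlety worth flagging explicitly in the write-up: one must confirm that two edges coming from the \emph{same} $2\times 2$ patch never coincide or cross, and that edges from \emph{different} patches meet only at shared boundary points and never cross transversally in the interior of a patch. This follows because every edge of $\mathcal{C}$ lies inside the closed unit square of a single $2\times 2$ patch and the enumerated edges within any one patch are disjoint except possibly at the patch centre in the diagonal case; treating that centre as a genuine crossing would violate the degree-$2$ count, so the diagonal patch must be resolved as two non-crossing arcs, consistent with the stated $8$-connectivity convention. With that resolution fixed, the local model at every point of $\mathcal{C}$ --- boundary point or interior point of an edge --- is an open interval, completing the proof.
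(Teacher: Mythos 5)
Your proposal is correct and follows essentially the same route as the paper: each boundary point lies in exactly two adjacent $2\times 2$ patches and receives exactly one edge-endpoint from each, giving degree two everywhere, and edges meet only at shared endpoints, so finiteness yields a disjoint union of simple closed PL curves. The only quibble is that in the diagonal patch the two edges are parallel chords cutting off the two background corners and do not actually pass through the patch centre, but you resolve this correctly in your final paragraph by insisting on two non-crossing arcs.
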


\begin{proof}
Let $P$ be the $n_r \times n_c$ array with rows indexed by $i=1,\ldots, n_r$ and columns by $j=1,\ldots, n_c$.
By assumption, the outermost rows and columns of $P$ are background, i.e.,  $P(1,j) = P(n_r, j) = P(i, 1) = P(i, n_c) = 0$. 
Each boundary point sits half-way between two 4-adjacent pixels with distinct values, so every boundary point has first coordinate $1 < p_1 < n_r$ and second coordinate $ 1 < p_2 < n_c$. 
It follows that every boundary point must belong to exactly two adjacent $2\times 2$ pixel patches and that  
 every boundary point connects to exactly two boundary edges, by construction.   
As a combinatorial object then, each component of $\mathcal{C}$ is a discrete closed 1-manifold.   
Also by construction (see Fig.~\ref{fig:boundaryedges}) any two boundary edges can only intersect at their endpoints and we conclude that each component of $\mathcal{C}$ is a simple closed PL-curve.
\end{proof}

\begin{lemma}
Let $A$ be the union of components of $\R^2 \setminus \mathcal{C} $ that contain at least one foreground pixel of the binary image array $P$.  Then $A$ is a bounded manifold with boundary $\partial A = \mathcal{C}  $. 
\end{lemma}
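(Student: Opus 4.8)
The plan is to read the statement with the usual convention that ``the manifold with boundary $A$'' means $A$ together with its topological frontier; write $\cl(A)$ for this closed set. I would prove three things: (i) $\cl(A)$ is bounded; (ii) $\cl(A)=A\cup\mathcal{C}$ with $A$ open, so each point of $A$ is an interior point with a Euclidean disk neighbourhood inside $\cl(A)$; and (iii) every point $p\in\mathcal{C}$ has a neighbourhood in $\cl(A)$ that is PL-homeomorphic to a relatively open subset of $[0,\infty)\times\R$ meeting $\{0\}\times\R$. Since $A$ is open it contributes no boundary points, so (i)--(iii) give that $\cl(A)$ is a PL $2$-manifold with boundary exactly $\mathcal{C}$, and in particular $\mathcal{C}=\cl(A)\setminus A=\partial A$.

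For (i): by assumption the outermost rows and columns of $P$ are background, so by (the proof of) Lemma~\ref{lem:closedcurves} the set $\mathcal{C}$ lies in the open rectangle $(1,n_r)\times(1,n_c)$, and hence the unbounded component of $\R^2\setminus\mathcal{C}$ contains every pixel of the background frame. By the construction of $\mathcal{C}$, together with the classical correspondence of connected components in digital topology for the mixed $8$-/$4$-adjacency convention (cf.~\cite{kong_digital_1989}), each component of $\R^2\setminus\mathcal{C}$ contains pixels of a single type; in particular the unbounded component contains no foreground pixel, so it is not one of the components making up $A$. Therefore $A$ is a union of bounded components and is bounded. Statement (ii) is then formal: $A$ is a union of components of the open set $\R^2\setminus\mathcal{C}$, so any limit point of $A$ not in $A$ must lie in $\mathcal{C}$, giving $\cl(A)\subseteq A\cup\mathcal{C}$; and, granting (iii), each $p\in\mathcal{C}$ is a limit of the foreground side and hence in $\cl(A)$, giving equality and $\partial A=\mathcal{C}$.

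The heart of the argument is (iii). By Lemma~\ref{lem:closedcurves} the component $\hat C\subseteq\mathcal{C}$ containing $p$ is a simple closed PL curve, so for a sufficiently small disk $D$ about $p$, $D\cap\mathcal{C}$ is a single PL arc through $p$ and $D\setminus\mathcal{C}=D^{+}\sqcup D^{-}$ is a disjoint union of two open PL half-disks. By the Jordan curve theorem applied to $\hat C$, the two half-disks lie in different components of $\R^2\setminus\hat C$, and since a path avoiding $\mathcal{C}$ also avoids $\hat C$, they lie in different components of $\R^2\setminus\mathcal{C}$ as well. It then remains to show that exactly one of $D^{+},D^{-}$ is contained in $A$: by inspecting the finitely many local pictures in Fig.~\ref{fig:boundaryedges}, both along the interior of a boundary edge and at a boundary point where two edges meet, one side of the arc can be joined inside $D$ by a short path missing $\mathcal{C}$ to a foreground pixel and the other side to a background pixel, and purity of components forces the first side into a foreground summand of $A$ and the second into a background component disjoint from $A$. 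Hence $D\cap\cl(A)=D^{+}\cup(D\cap\mathcal{C})$ is a closed PL half-disk with $p$ on its diameter, which is the required chart.

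The step I expect to be the main obstacle is precisely this last local verification: organising the case analysis of Fig.~\ref{fig:boundaryedges} so that, uniformly over edge interiors and over the meeting points of two edges, exactly one local side of $\mathcal{C}$ is a connected piece of a foreground component. The convention that the foreground lies on the left of every oriented boundary edge is what makes the picture uniform, but one still has to treat carefully the corner configurations in which a background pixel is ``pinched off'' by the $8$-connectivity of the foreground, checking that no foreground pixel ever has a corner cut off and that the foreground side remains locally connected in $\R^2\setminus\mathcal{C}$ there. Once (iii) is established, (ii) and the conclusion that $\cl(A)$ is a PL $2$-manifold with $\partial\cl(A)=\mathcal{C}=\partial A$ follow immediately, and boundedness has already been shown.
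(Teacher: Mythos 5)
Your argument is correct and follows essentially the same route as the paper's proof: boundedness via the background frame, openness of $A$ as a union of components of $\R^2\setminus\mathcal{C}$, and the local analysis of how each boundary edge separates a foreground corner from a background corner within its $2\times 2$ pixel patch. The only substantive difference is that you go further than the paper by explicitly verifying the half-disk chart condition at points of $\mathcal{C}$ (via the Jordan curve theorem and the purity of the components of $\R^2\setminus\mathcal{C}$), whereas the paper only establishes the set equality $\partial A=\mathcal{C}$ and leaves the manifold-with-boundary structure implicit in the construction.
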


\begin{proof}
$A$ is bounded because the image array is finite.  
Each connected component $C_a \in \mathcal{C}$ is a simple closed PL-curve, so $\R^2 \setminus C_a$ consists of two open domains. 
Each connected component of $\R^2 \setminus \mathcal{C}$ is formed by the intersection of a finite number of these domains so is also open and it follows that $A$ is open. 
Clearly $\partial A \subset   \mathcal{C}  $, we must now show that $\partial A \supset  \mathcal{C}  $.  
Let $p \in   \mathcal{C}  $, i.e., $p$ is an arbitrary point on one of the boundary edge segments. 
We can write the coordinates of $p$ as $(i+\epsilon, j+\eta)$ for integers $i,j$ and fractional parts $\epsilon, \eta \in \left[0, 1\right)$.  
We know that each boundary edge divides the $2\times 2$ patch with corners $(i,j)$, $(i+1,j)$, $(i+1,j+1)$, $(i,j+1)$, into two pieces such that at least one of these corners has $P(k,l) = 1$ and this implies that $p \in \partial A$. 
\end{proof} 
 
The above results show that $\cl(A)$ and $X = \partial A$ satisfy the conditions for the theorem(s) of Section~\ref{subsec:XPH_boundary} as $X$ is a finite union of disjoint piecewise-linear 1-manifolds. We then define $B$ to be the closed complement of $A$ in the rectangular domain of the image, $B =  ([1,n_r] \times [1,n_c] )\setminus A $.
A straightforward argument by contradiction shows that no background pixel lies in $A$, so we have $P^{-1}[0] \subset B$.

\begin{remark}
Given a three-dimensional binary array of voxels, $V(i,j,k)$, there are analogous definitions of direct-adjacency between elements, and results that require foreground and background to be viewed with complementary adjacencies to maintain topological consistency~\cite{kong_digital_1989}.   There are also established methods to construct a triangular mesh surface that separates the connected components of foreground and background.  These are termed  `marching cubes algorithms' ~\cite{newman2006survey}.  
\end{remark} 
 
\subsection{ Breaking ties and other practical considerations}

In this section we derive additional results required to extend theorems from Section~\ref{sec:morse} so that they hold for the digital boundary curves.
In particular, Theorem~\ref{thm:submodule} specified that the height function in direction $v$ is a Morse function, i.e., that the critical points are isolated and the critical values are distinct. 
Both these conditions are challenged by the geometry of a digital grid as the boundary curve points lie at integer and half-integer coordinates, and the boundary curve edges are either horizontal, vertical or in one of two diagonal directions.   
Additionally, the direction vectors $v$ are typically chosen to be equal-spaced rational fractions of $\pi$, and will often be perpendicular to some boundary edges. 
This means that when computing the XPHT for equiangular directions $v$ we  expect many vertices of the boundary curves to have the same height with respect to any given $v$. 

Our computations of persistent homology involve height filtrations of boundary curves considered as simplicial complexes.  
The algorithm for computing persistent homology of simplicial complexes orders simplices by their maximal value with lower-dimensional simplices added before higher-dimensional ones if their maximal values are the same. 
It is well understood that the persistent homology of this discrete filtration of complexes gives the same persistence diagram as that of a continuous filtration of a PL-embedding of the complex. 
We do, however, need to explore how a filtration with multiple simplices taking the same height with respect to direction $v$ relates to the critical points of a piecewise-linear Morse function constructed from an arbitrarily close direction $v_t$.

We first need to generalise the notion of $0$-critical point to allow for line segments 
\begin{definition}
Let $\gamma$ be a piecewise-linear non-intersecting curve in $\R^2$ with $m$ vertices traversed in cyclic order, $x_0, x_1,\dots ,x_m=x_0$.  Note that in the following text the indices are assumed to be given as  integers modulo $m$. 
We say $x_i$ is an \emph{isolated $0$-critical vertex} if $h_v(x_{i-1}) > h_v(x_{i})$ and $h_v(x_i)<h_v(x_{i+1})$.  
We say that the line segment from $x_j$ to $x_k$ is a \emph{$0$-critical segment} if $h_v(x_i)=h_v(x_j)$ for all $i=j, j+1, \ldots , k$ and that $h_v(x_{j-1})>h_v(x_j)$ and $h_v(x_{k})<h_v(x_{k+1})$. Denote this line segment as $e(x_j, x_k)$. 
\end{definition}

Observe that if $e=e(x_j, x_k)$ is a 0-critical segment for $h_v$ then the vector $x_k-x_j$ must be perpendicular to $v$, and $h_v$ is constant over $e$.  
Recall that $0$-critical points on the boundary correspond to local minima, and the $0$-critical points which are $(+)$-critical will be local minima as points in $A$. To go from $0$-critical points to $0$-critical segments we need to relax this notion of minima to have non-strict inequalities.

\begin{definition}
We say that a vertex $x_i$ lying on a $0$-critical segment $e$ is \emph{$(+)$-critical} for $h_v^A$ if there exists an $\epsilon>0$ such that for all $a\in B(x_i, \epsilon)\cap A$ we have $h_v(x_i)\leq h_v(a)$. Given the definition of manifold with boundary, if any vertex on a $0$-critical segment $e$ is $(+)$-critical then every vertex on it will be, and we say that the $0$-critical segment is \emph{$(+)$-critical}.
\end{definition}

We now distinguish which of the $0$-critical points and segments on a piecewise linear boundary curve are $(+)$-critical.  
We will use the fact that the orientation of planar triangles is defined by the sign of 
the determinant of a matrix formed from edge vectors as follows. 
First let $DET(x,y)$ be the determinant of a $2\times 2$ matrix with columns $x$ and $y$. 
Given a triangle $\Delta(a,b,c)$ with positive area, the vertices $a,b,c$ are in an anticlockwise order if $DET(c-b, a-b)>0$ and in a clockwise order if $DET(c-b, a-b)<0$. 

The following two geometric lemmas cover the cases where one or both of the edges adjacent to a local minimum is perpendicular to the direction $v$.  

\begin{lemma}\label{lem:min1}
Let $A\subset \R^2$ be a bounded subset whose boundary is the disjoint union of piecewise linear closed curves. Let $\gamma=(x_0,x_1, x_2, \ldots x_m=x_0)$ be a piecewise linear boundary curve of $A$ with vertices listed anticlockwise with respect to $A$.  Fix $v\in S^2$. 
If $x_i$ is an isolated $0$-critical vertex of $h_v^\gamma$, or an endpoint of a $0$-critical segment $e$, then $x_i$ is $(+)$-critical for $h_v^A$ if and only if $DET(x_{i+1}-x_i, x_{i-1}-x_i)>0$.
\end{lemma}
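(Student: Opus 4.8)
The plan is to localise the problem at $x_i$ and reduce it to an elementary fact about planar sectors. Set $u := x_{i-1}-x_i$ and $w := x_{i+1}-x_i$, so that the quantity in the statement is $DET(w,u)$. After a rotation of $\R^2$ we may assume $v = (0,1)$; then $h_v$ is the second coordinate, and the closed half-plane $H := \{\, y : \langle y - x_i, v\rangle \ge 0 \,\}$ consists of the points lying weakly above $x_i$, with horizontal boundary line.

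First I would pin down what $A$ looks like near $x_i$. Since $\gamma$ is one of the pairwise-disjoint simple closed PL boundary curves of $A$, a small enough neighbourhood of $x_i$ in $A$ is a planar sector $S$ with apex $x_i$ bounded by the two rays $\R_{\ge 0}u$ and $\R_{\ge 0}w$. To decide \emph{which} of the two sectors this is, I use that the vertices of $\gamma$ are listed anticlockwise with respect to $A$, i.e.\ that $A$ lies on the left as we traverse $\gamma$ in order of increasing index: along the edge $[x_i,x_{i+1}]$ (direction $w$) the inward normal is $w$ rotated by $+\tfrac{\pi}{2}$, and along $[x_{i-1},x_i]$ (direction $-u$) it is $-u$ rotated by $+\tfrac{\pi}{2}$, i.e.\ $u$ rotated by $-\tfrac{\pi}{2}$. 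These two facts identify $S$ as the sector swept out by rotating the ray $\R_{\ge 0}w$ anticlockwise until it first meets $\R_{\ge 0}u$; equivalently, $S$ is the set of directions with argument in the arc running from $\arg w$ increasing to $\arg u$ (mod $2\pi$). Because $h_v - h_v(x_i)$ is linear, the defining condition for $x_i$ to be $(+)$-critical for $h_v^A$, namely $h_v \ge h_v(x_i)$ throughout a neighbourhood of $x_i$ in $A$, is equivalent to the containment $S \subseteq H$.

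It then remains to show $S \subseteq H \iff DET(w,u) > 0$. The hypothesis --- $x_i$ is an isolated $0$-critical vertex, or an endpoint of a $0$-critical segment --- forces $\langle u,v\rangle \ge 0$ and $\langle w,v\rangle \ge 0$, with at least one inequality strict, so both $u$ and $w$ have argument in $[0,\pi]$; and since $\gamma$ is simple and $\langle u,v\rangle,\langle w,v\rangle$ do not both vanish, $u$ and $w$ are linearly independent, so $DET(w,u) = |w|\,|u|\sin(\arg u - \arg w) \ne 0$. If $DET(w,u) > 0$ then $\arg w < \arg u$ with both in $[0,\pi]$, so $S$ is simply the angular interval $[\arg w, \arg u] \subseteq [0,\pi]$, which lies in $H$; hence $x_i$ is $(+)$-critical. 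If instead $DET(w,u) < 0$ then $\arg u < \arg w$, so the arc from $\arg w$ up to $\arg u$ (mod $2\pi$) has length greater than $\pi$ and therefore contains the entire lower half-circle of directions at $x_i$; in particular there are points of $A$ immediately below $x_i$, where $h_v < h_v(x_i)$, so $x_i$ is not $(+)$-critical. The two implications together give the claimed equivalence.

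I expect the crux to be the middle paragraph: correctly extracting from the ``anticlockwise with respect to $A$'' convention which of the two candidate sectors is genuinely $A$ near $x_i$ --- a sign slip here reverses the conclusion --- and checking that this identification, and the subsequent arc-length argument, go through uniformly whether $x_i$ is a strict local minimum of $h_v^\gamma$ or merely the end of a flat $0$-critical segment (in which case one of $u,w$ is perpendicular to $v$, so one bounding ray of $S$ lies on the boundary line of $H$). The final step is then just the elementary observation that a planar sector whose two bounding rays lie in a closed half-plane through its apex is contained in that half-plane exactly when its angular width is at most $\pi$ --- and strict inequality in $DET$ rules out the boundary case of width exactly $\pi$.
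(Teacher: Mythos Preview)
Your argument is correct and follows essentially the same geometric idea as the paper's proof: localise at $x_i$, use the anticlockwise orientation of $\gamma$ to identify which of the two sectors bounded by the incident edges is $A$, and then check whether that sector lies in the half-plane $\{h_v \ge h_v(x_i)\}$. The paper packages this slightly differently---it works with the small triangle $\Delta(y_{i-1},x_i,y_{i+1})$ and reads off the sign of $DET$ directly from whether that triangle (hence its vertices) is oriented anticlockwise, rather than parametrising by arguments and arc lengths as you do---but the content is the same.
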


\begin{proof}
There is some $\epsilon>0$ such that the interior of the triangle bounded by $x_i, x_i+ \epsilon(x_{i-1}-x_i)$ and $x_i +\epsilon (x_{i+1}-x_i)$ is either entirely contained in $A$ or is entirely contained in the complement of $A$. For the sake of computations let $y_{i-1}=x_i+ \epsilon(x_{i-1}-x_i)$ and $y_{i+1}=x_i+ \epsilon(x_{i+1}-x_i)$. By assumption we have at least one of $h_v(y_{i+1})>h_v(x_i)$ or $h_v(y_{i-1})>h_v(x_i)$ which implies that the convex hull of $x_i, y_{i-1}$ and $y_{i+1}$ has positive area and  $DET(y_{i+1}-x_i, y_{i-1}-x_i)\neq 0$.

Suppose that $x_i$ is $(+)$-critical for $h_v^A$ which implies that $\Delta(y_{i-1},x_i,y_{i+1})$ is a subset of $A$. Since $\gamma$ traces a boundary curve that is going anticlockwise around $A$ we must have vertices $y_{i-1},x_i,y_{i+1}$ in an anticlockwise order. This implies $DET(y_{i+1}-x_i, y_{i-1}-x_i)>0$.

If $x_i$ is not $(+)$-critical then the opposite holds: we have $\Delta(y_{i-1},x_i,y_{i+1})$ is not contained in $A$, that $y_{i-1},x_i,y_{i+1}$ are in a clockwise order and thus $DET(y_{i+1}-x_i, y_{i-1}-x_i)<0$.
\end{proof}

If the point $x_i$ is contained strictly inside a $0$-critical segment then we need an alternative approach. This will also be useful when the points on the curve are close to co-linear, because we want to avoid any possible issues with floating point errors in computations. 

\begin{lemma}\label{lem:min2}
Let $A\subset \R^2$ be a bounded subset whose boundary is the disjoint union of piecewise linear closed curves. 
Let $\gamma=(x_0,x_1, x_2, \ldots x_m=x_0)$ be a piecewise linear boundary curve of $A$ with vertices listed anticlockwise with respect to $A$.  Fix $v\in S^2$. 
Let $x_i$ an isolated $0$-critical vertex or a vertex in a $0$-critical segment $e$ with respect to the function $h_v^\gamma$. 
Furthermore suppose that $\angle(x_{i-1}, x_i, x_{i+1}) > \pi/2 $. 
Let $w_i$ denote the rotation of the vector $x_{i+1}-x_i$ anticlockwise by $\pi/2$. Then $x_i$ is $(+)$-critical for $h_v^A$ if and only if $w_i\cdot v > 0$.
\end{lemma}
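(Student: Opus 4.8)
The plan is to reduce the claim to Lemma~\ref{lem:min1} by a direct geometric comparison of the two criteria.  Both lemmas characterise when $x_i$ is $(+)$-critical for $h_v^A$: Lemma~\ref{lem:min1} says this happens exactly when $DET(x_{i+1}-x_i,\, x_{i-1}-x_i)>0$, while the present lemma asserts it happens exactly when $w_i\cdot v>0$, where $w_i$ is the anticlockwise rotation of $x_{i+1}-x_i$ by $\pi/2$.  So it suffices to show that, under the hypotheses in force (namely that $x_i$ is an isolated $0$-critical vertex or lies on a $0$-critical segment, and $\angle(x_{i-1},x_i,x_{i+1})>\pi/2$), the sign of $DET(x_{i+1}-x_i,\,x_{i-1}-x_i)$ equals the sign of $w_i\cdot v$ — and in particular that neither quantity is zero.

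First I would set up coordinates: write $u=x_{i+1}-x_i$ and $u'=x_{i-1}-x_i$, so that $w_i$ is $u$ rotated by $+\pi/2$.  The key algebraic identity is that for any vectors $a,b\in\R^2$, $DET(a,b)=a^{\perp}\cdot b$ where $a^{\perp}$ is $a$ rotated anticlockwise by $\pi/2$; hence $DET(u,u') = w_i\cdot u'$.  So the first criterion becomes $w_i\cdot u'>0$, and the task is to compare $\sgn(w_i\cdot u')$ with $\sgn(w_i\cdot v)$.  The plan is to split into the two cases the lemma's statement is designed to cover.  If $x_i$ is an \emph{isolated} $0$-critical vertex, then $h_v(x_{i-1})>h_v(x_i)$ and $h_v(x_{i+1})>h_v(x_i)$, i.e.\ $v\cdot u>0$ and $v\cdot u'>0$; so both $u$ and $u'$ point into the open halfplane $\{y: v\cdot y>0\}$.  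If $x_i$ lies on a $0$-critical segment, then exactly one of the two adjacent edges is along the segment (say $u$, with $v\cdot u=0$) and the other points strictly upward ($v\cdot u'>0$); the hypothesis $\angle(x_{i-1},x_i,x_{i+1})>\pi/2$ forces $u\cdot u'<0$, which pins down on which side of the line $\R u$ the vector $u'$ lies relative to $v$.

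The core geometric step is then: under the angle condition $u\cdot u'<0$ (equivalently $\angle>\pi/2$), the vectors $u'$ and $v$ lie on the \emph{same} side of the line spanned by $u$, so $\sgn(w_i\cdot u')=\sgn(w_i\cdot v)$.  To see this, note the plane is divided by $\R u$ into the halfplanes $\{w_i\cdot y>0\}$ and $\{w_i\cdot y<0\}$.  In the isolated-vertex case, $v$ makes an acute angle with both $u$ and $u'$ while $u\cdot u'<0$ forces $u$ and $u'$ onto opposite sides of the line $v^{\perp}$; a short case analysis (or a picture — the situation is a genuine local minimum of $h_v|_\gamma$ so $\gamma$ locally lies above $x_i$) shows $v$ must be ``between'' $u$ and $u'$ in angular terms, hence $v$ and $u'$ are on the same side of $\R u$.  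In the segment case, $u\perp v$ so $\R u = v^{\perp}$ and the statement ``$u'$ and $v$ on the same side of $\R u$'' is literally ``$v\cdot u'>0$ and $w_i\cdot u'$ has the sign corresponding to $v$'', which reduces to bookkeeping on the two rotation conventions.  I would also record that $w_i\cdot v\neq 0$: in the isolated case this is because $v$ is not parallel to $u$ (otherwise $h_v(x_{i+1})=h_v(x_i)$, contradicting isolation), and in the segment case $w_i$ is parallel to $v$ up to sign, so $w_i\cdot v=\pm|v|^2\neq 0$.

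The main obstacle I anticipate is the case analysis untangling the three rotation/orientation conventions (the anticlockwise traversal of $\gamma$, the anticlockwise rotation defining $w_i$, and the sign convention in $DET$) and making the ``$v$ lies between $u$ and $u'$'' claim rigorous rather than merely pictorial — in particular handling the boundary case where $u'$ is exactly parallel to $v$ (which can happen: $x_{i-1}$ could be directly above $x_i$).  In that degenerate sub-case $w_i\cdot u' = \pm(w_i\cdot v)\cdot(\text{positive scalar})$ after rescaling, so the two signs still agree; I would treat it explicitly.  Once the sign equality $\sgn(DET(u,u'))=\sgn(w_i\cdot v)$ is established with both sides nonzero, the lemma follows immediately from Lemma~\ref{lem:min1}.
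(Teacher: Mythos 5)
There is a genuine gap: your reduction to Lemma~\ref{lem:min1} breaks down in exactly the case this lemma exists to handle, namely when $x_i$ lies \emph{strictly inside} a $0$-critical segment. In that case both adjacent edges lie on the level line $\{y : v\cdot y = h_v(x_i)\}$, so $x_{i-1}, x_i, x_{i+1}$ are collinear, $DET(x_{i+1}-x_i,\,x_{i-1}-x_i)=0$, and the sign comparison you propose is vacuous; moreover Lemma~\ref{lem:min1} does not even apply to such a vertex, since its hypothesis requires $x_i$ to be an isolated $0$-critical vertex or an \emph{endpoint} of a $0$-critical segment. Your case analysis asserts that when $x_i$ lies on a $0$-critical segment ``exactly one of the two adjacent edges is along the segment,'' but that is only true for the two endpoints. (Your sign computation is in fact correct for isolated vertices and for segment endpoints — there $\sgn DET(u,u') = \sgn(w_i\cdot v)$ does hold under the angle condition — but those cases are already covered by Lemma~\ref{lem:min1}; the paper explicitly introduces Lemma~\ref{lem:min2} because an ``alternative approach'' is needed for interior vertices and for numerically near-collinear configurations.)

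The paper's proof avoids Lemma~\ref{lem:min1} entirely. It first notes $w_i\cdot v\neq 0$ (else $h_v(x_i)$ would lie strictly between $h_v(x_{i-1})$ and $h_v(x_{i+1})$, contradicting $0$-criticality). Then the anticlockwise orientation together with $\angle(x_{i-1},x_i,x_{i+1})>\pi/2$ forces $w_i$ to point into $A$ at $x_i$, so setting $y = x_i + w_i$, a small neighbourhood $A\cap B(x_i,\epsilon)$ is covered by the triangles $\Delta(x_{i-1},x_i,y)$ and $\Delta(y,x_i,x_{i+1})$. If $w_i\cdot v>0$ then $h_v\geq h_v(x_i)$ on both triangles by writing points as affine combinations of the vertices (each vertex has $h_v\geq h_v(x_i)$), so $x_i$ is $(+)$-critical; if $w_i\cdot v<0$ then $h_v$ strictly decreases along $t\mapsto x_i+tw_i$ into $A$, so it is not. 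If you want to salvage a reduction-style argument, you would need to handle interior vertices separately, e.g.\ by invoking the paper's observation that all vertices of a $0$-critical segment are simultaneously $(+)$-critical or not, applying Lemma~\ref{lem:min1} at an endpoint, and then showing the sign of $w_i\cdot v$ is constant along the segment — none of which is currently in your write-up.
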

\begin{proof}

If $w_i\cdot v=0$ then we have $h_v(x_i)$ lying strictly between $h_v(x_{i-1})$ and $h_v(x_{i+1})$ contradicting the assumption $x_i$ is $0$-critical, so we know $w_i\cdot v\neq 0$.

Since $\gamma$ is traced anticlockwise around $A$ and $\angle(x_{i-1}, x_i, x_{i+1}) > \pi/2 $ we know that $w$, the rotation of $x_{i+1}-x_i$ anticlockwise by $\pi/2$, will point into $A$ from $x_i$. Set $y=x_i+w_i$. If we rotate anticlockwise around $x_i$ from direction $w_i$ we encounter $x_{i-1}-x_i$ within a rotation of $\pi$. This follows from $\angle (y, x_i, x_{i+1})=\pi/2$ and  $\angle(x_{i-1}, x_i, x_{i+1}) > \pi/2 $. Since $w_i$ points into $A$ from $x_i$, for small $\epsilon>0$, we can cover $A\cap B(x_i, \epsilon)$ by triangles $\Delta(x_{i-1}, x_i, y)$ and $\Delta(y, x_i, x_{i+1})$. 

If $w_i\cdot v > 0$ then $h_v(y)>h_v(x_i)$. Every point $a\in \Delta(x_{i-1}, x_i, y)$ can be written as an affine combination $a=a_1x_{i-1}+a_2x_{i} +a_3y$. For this $a$,
$$h_v(a)= a_1h_v(x_{i-1})+a_2h_v(x_{i}) +a_3h_v(y)\geq h_v(x_i)$$ as $h_v(x_{i-1}), h_v(y)\geq h_v(x_i)$. Similarly every point $a\in \Delta(y,x_i, x_{i+1})$ also satisfies $h_v(a)\geq h_v(x_i)$. Together these imply that $x_i$ is $(+)$-critical.

If $w_i\cdot v <0$ then $h_v$ decreases along $t \mapsto x_i + tw_i$, showing that for all $\epsilon>0$ there are points in $a_\epsilon\in A \cap B(x_i, \epsilon)$ with $h_v(a_\epsilon)<h_v(x_i)$. this implies $x_i$ is not $(+)$-critical.
\end{proof}

We are now ready to state a related theorem to Theorem \ref{thm:submodule} for PL subsets of the plane where we drop the Morse condition.

\begin{theorem}\label{thm:imagesubmodule}
Let $A\subset \R^2$ be a $2$-dimensional piecewise linear manifold with boundary $X=\partial A$. Fix $v\in S^1$.
The $0$-dimensional persistent homology of $h_v^X:X\to \R$ can be written as
$$\PH_0(X,h_v^X)=\oplus_{i=1}^m \mathcal{I}_{[h_v(y_{j_i}),d_i)}$$
where $y_{j_1}, \ldots y_{j_m}$ are the set of vertex representatives, and $d_1, \ldots d_m \in \R\cup \infty$. Here we have only included intervals with positive length.

Let $J^{\ord}$ be the subset of $\{1, 2, \ldots m\}$ such that $d_i$ is finite and $y_{j_i}$ is $(+)$-critical for $h_v^A$. Then
$$\Ord_0(A,h_v^A)=\oplus_{i\in J^{\ord}} \mathcal{I}_{[(h_v(y_{j_i}), \ord),(d_i, \ord))}.$$

Now let $J^{\rel}$ be the subset of $\{1, 2, \ldots m\}$ such that  $d_i$ is finite but $y_i$ is not $(+)$-critical for $h_v^A$. 
$$\Rel_1(A, h_v^A)= \oplus_{i\in J^{\rel}} \mathcal{I}_{[(d_i, \rel), (h_v(y_{j_i}),\rel))}.$$
\end{theorem}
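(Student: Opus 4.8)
The plan is to read the result off the Morse case, Theorem~\ref{thm:submodule}, by perturbing $v$ to a generic nearby direction and then controlling how the $0$-critical vertices and segments behave in the limit, using Lemmas~\ref{lem:min1} and~\ref{lem:min2}.

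First observe that, were $h_v^A$ already PL Morse, the claim for $\Ord_0(A,h_v^A)$ would be exactly the $k=0$ case of Theorem~\ref{thm:submodule}: since $\XPH_0(X,h_v)=\Ord_0(X,h_v)\oplus\Ess_0(X,h_v)=\PH_0(X,h_v)$ (the curve $X$ has no relative $0$-classes), $\XPH_0(A,h_v)$ is the direct sum of those intervals of $\PH_0(X,h_v)$ born at a $(+)$-critical vertex of $h_v^A$, and keeping only the finite deaths gives $\Ord_0(A,h_v^A)$. The claim for $\Rel_1(A,h_v^A)$ then follows either from the $k=1$ case of Theorem~\ref{thm:submodule} together with the Lefschetz/excision identification $H_1(X,X^s)\cong H_0(X_s)$ used in the proof of Proposition~\ref{prop:symmetry} (which matches the finite interval $\mathcal{I}_{[(h_v(y_{j_i}),\ord),(d_i,\ord))}$ of $\Ord_0(X)$ with the interval $\mathcal{I}_{[(d_i,\rel),(h_v(y_{j_i}),\rel))}$ of $\Rel_1(X)$), or, more directly, by applying the $\Ord_0$ statement in direction $-v$ and transporting it through the duality of Corollary~\ref{cor:duality} for $A$.

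The real work is to remove the Morse hypothesis. For each small $t>0$ I would pick a direction $v_t$, with $\|v_t-v\|$ of order $t$, such that $h_{v_t}$ restricts to PL Morse functions on $A$ and on $X$, and such that the strict vertex order induced by $v_t$ refines the simplex order used to compute $\PH_0(X,h_v^X)$; such $v_t$ exist because $X$ is a finite simplicial complex. For $v_t$ the standard comparison between the simplicial height filtration and a generic PL one gives a canonical matching of the positive-length intervals of $\PH_0(X,h_v^X)$ with a subset of the intervals of $\Ord_0(X,h_{v_t})$ under which the $i$-th interval has birth $b_i^t\to h_v(y_{j_i})$ and death $d_i^t\to d_i$; the birth vertex of the perturbed interval is either $y_{j_i}$ itself or, when $y_{j_i}$ lies on a $0$-critical segment $e$ of $h_v^X$, the endpoint of $e$ selected by $v_t$ as the local minimum --- in either case a vertex of $h_v$-height $h_v(y_{j_i})$. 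Applying Theorem~\ref{thm:submodule} to the Morse function $h_{v_t}^A$ then expresses $\Ord_0(A,h_{v_t})$, resp.\ $\Rel_1(A,h_{v_t})$, as the direct sum over those perturbed intervals whose birth vertex is, resp.\ is not, $(+)$-critical for $h_{v_t}^A$.

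It remains to transfer these $(+)$-criticality labels from $v_t$ back to $v$, and this is exactly what Lemmas~\ref{lem:min1} and~\ref{lem:min2} are designed for. The birth vertex of the $i$-th perturbed interval is an isolated $0$-critical vertex of $h_{v_t}^X$, so by Lemma~\ref{lem:min1} its $(+)$-criticality for $h_{v_t}^A$ is determined by the sign of a determinant $DET(\,\cdot\,,\,\cdot\,)$ formed from its two incident boundary edges, a quantity that does not involve $v_t$ at all; the same determinant, fed through Lemma~\ref{lem:min1} applied to the direction $v$ and to the corresponding isolated $0$-critical vertex or $0$-critical segment, decides whether $y_{j_i}$ is $(+)$-critical for $h_v^A$. (In near-collinear configurations one uses Lemma~\ref{lem:min2} instead, whose criterion is a strict inequality in a quantity continuous in the direction and nonzero at $v$, hence stable for $t$ small; the two lemmas agree wherever both apply.) Hence the index sets selecting $\Ord_0(A,h_{v_t})$ and $\Rel_1(A,h_{v_t})$ stabilise to $J^{\ord}$ and $J^{\rel}$ once $t$ is small. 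Finally, since $b_i^t\to h_v(y_{j_i})$ and $d_i^t\to d_i$, the modules $\bigoplus_{i\in J^{\ord}}\mathcal{I}_{[(b_i^t,\ord),(d_i^t,\ord))}$ and $\bigoplus_{i\in J^{\rel}}\mathcal{I}_{[(d_i^t,\rel),(b_i^t,\rel))}$ converge in bottleneck distance to the two modules in the statement, while continuity of the extended persistent homology transform in the direction --- with the type-splitting of Proposition~\ref{prop:sumtypes} to isolate the $\Ord_0$ and $\Rel_1$ summands --- gives $\Ord_0(A,h_{v_t})\to\Ord_0(A,h_v^A)$ and $\Rel_1(A,h_{v_t})\to\Rel_1(A,h_v^A)$; uniqueness of limits in this metric for half-open interval modules forces the two identities. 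I expect the main obstacle to be making the third paragraph precise: pinning down the canonical matching of simplicial and generic-PL persistence across $0$-critical segments, and verifying that the birth vertex selected by $v_t$ always falls squarely under the hypotheses of Lemma~\ref{lem:min1} or~\ref{lem:min2}, so that its $(+)$-criticality label is genuinely insensitive to the perturbation. The remainder is a routine application of Theorem~\ref{thm:submodule} and of the continuity already established for the XPHT.
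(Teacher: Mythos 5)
Your proposal is correct and follows essentially the same route as the paper's proof: perturb $v$ to a nearby Morse direction $v_t$ that consistently breaks ties (resolving each $0$-critical segment to one endpoint), observe via Lemma~\ref{lem:min1} that the determinant criterion for $(+)$-criticality is independent of the direction so the index sets $J^{\ord}$ and $J^{\rel}$ are unchanged, apply Theorem~\ref{thm:submodule} for each small $t$, and pass to the limit $t\to 0^+$ using continuity of the persistence modules in the direction. The only differences are cosmetic: you spell out the $\Rel_1$ case in the Morse setting via duality and invoke Lemma~\ref{lem:min2} as a numerical fallback, while the paper lets Theorem~\ref{thm:submodule} handle both summands at once and uses only Lemma~\ref{lem:min1}.
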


\begin{proof}
If $h_v^A$ is a Morse function the result follows directly from Theorem \ref{thm:submodule}, so suppose that $h_v^A$ is not a Morse function. Recall that since $A\subset \R^2$ is a $2$-dimensional piecewise linear manifold with boundary $X=\partial A$, a sufficient condition for $h_v^A$ to be Morse will be that all the vertices in $A$ have distinct values under $h_v^A$.

Let $v_t$ be the rotation of $v$ anticlockwise by $t$. Given $v$ there is an $\epsilon>0$ such that for all $t<\epsilon$ we have $h_v(x)<h_v(y)$ implies $h_{v_{t}}(x)<h_{v_t}(y)$. 
We can now break the ties that imply $h_v^A$ is not Morse; where $h_v(x)=h_v(y)$ we have $h_{v_t}(x)\neq h_{v_t}(y)$. We choose $\epsilon>0$ small enough that $h_{v_t}^A$ is a Morse function for all $t<\epsilon$.

A vertex $y_{j_i}$ will be an isolated $0$-critical vertex for $h_v^X$ if and only if it is an isolated vertex for $h_{v_t}^X$, as the order of the heights of $y_{j_i-1},  y_{j_i}$ and $y_{j_i+1}$ are the same under both $h_v$ and $h_{v_t}$. Since $DET(y_{j_i+1}-y_{j_i}, y_{j_i-1}-y_{j_i})>0$ is independent of $v$ we know that whether or not $y_{j_i}$ is $(+)$-critical is the same under $h_v^A$ and $h_{v_t}^A$ by Lemma \ref{lem:min1}. For  ease of reference later in the proof, set $x_{j_i}=y_{j_i}$.  

Now suppose $e(x_k, x_l)$ is a $0$-critical segment for $h_v^A$ with $y_{j_i}\in e(x_k, x_l)$. Since $h_{v_t}^A$ is Morse, all the vertices in $e(x_k, x_l)$ take distinct values for $h_{v_t}^X$, with exactly one of  $x_k$ or $x_l$ now an isolated $0$-critical point.   
Denote this endpoint by $x_{j_i}$. 
Since we choose $v_t$ to be a small anticlockwise rotation of $v$, this choice will be a consistent tie-break for all $0<t<\epsilon$. 
Again since $DET(x_{j_i+1}-x_{j_i}, x_{j_i-1}-x_{j_i})>0$ is independent of $v$ we know that whether or not $x_{j_i}$ is $(+)$-critical is the same under $h_v^A$ and $h_{v_t}^A$ by Lemma \ref{lem:min1}.

By construction,  $h_v(x_{j_i})=h_v(y_{j_i})$ for all $i$ so we have $$\PH_0(X,h_v^X)=\oplus_{i=1}^m \mathcal{I}_{[h_v(x_{j_i}),d_i)}.$$
The remainder of the proof is an argument in continuity.  For $t\in (0, \epsilon)$ we have 
$$\PH_0(X,h_{v_t}^X)=\oplus _{i=1}^m \mathcal{I}_{[h_v(x_{j_i}),d_{i,t})}$$ for some $d_{i,t}\in \R$. Since $\lim_{t\to 0^+}v_t=v$ we have $\lim_{t\to 0^+}\PH_0(X,h_{v_t}^X)=\PH_0(X,h_v^X)$ and thus 
$\lim_{t\to 0^+} d_{i,t}=d_i$ for all $i$.

Since each $x_{j_i}$ is $(+)$-critical with respect to $h_v^A$ if and only if it is $(+)$-critical with respect to $h_{v_t}^A$ we can apply Theorem \ref{thm:submodule} to say for all $t\in (0,\epsilon)$ that
$$\Ord_0(A,h_{v_t}^A)=\oplus_{i\in J^{\ord}} \mathcal{I}_{[(h_{v_t}(y_{j_i}), \ord),(d_{i,t}, \ord))}$$
and 
$$\Rel_1(A, h_{v_t}^A)= \oplus_{i\in J^{\rel}} \mathcal{I}_{[(d_{i,t}, \rel), (h_{v_t}(y_{j_i}),\rel))}.$$
Taking the limit as $t\to 0^+$ completes the proof.
\end{proof}

\section{Implementation details}
\label{sec:Implementation}

Using the theory developed in the previous sections we have implemented a package in R which takes as input a binary image and outputs the extended persistent homology transform of the foreground of that image. 
The R-package is available at \href{https://github.com/james-e-morgan/xpht}{https://github.com/james-e-morgan/xpht}.  
The paragraphs below describe a simple example to illustrate the sequence of steps followed when using the package. 
We finish this section with a fun application using the XPHT to cluster the shapes of letters from various standard fonts.  

Let $A$ denote the foreground of the binary image and $X$ the boundary between the foreground and background as constructed in the previous section. 
The user chooses the number of directions $K$, and the unit vectors are set to $v_i=(\cos(2\pi i/K), \sin(2\pi i)/K)$.
We can compute the extended persistent homology of $A$ for directions $v$ and $-v$ from the regular persistent homology of $X$ in direction $v$ together with knowledge of the minimum and maximum values of $h^X_v$ on each boundary curve.
Therefore, when the number of directions is even, the computational time for the XPHT is halved.  
If the user has a collection of shapes that require centring~\cite{turner2014persistent} then $K$ is required to be a multiple of four.

The first step is to label the connected components of the foreground and construct the oriented boundary curves around each of the components, labelling which curves are interior and which are exterior. 
Note that by Lemma \ref{lem:closedcurves} the boundary between the foreground and background is a disconnected collection of closed curves. 
This set of boundary curves is independent of the choice of direction and is computed only once. 
For an example see Figure \ref{fig:binary}. 

\begin{SCfigure}[20]
         \includegraphics[width=0.35\textwidth]{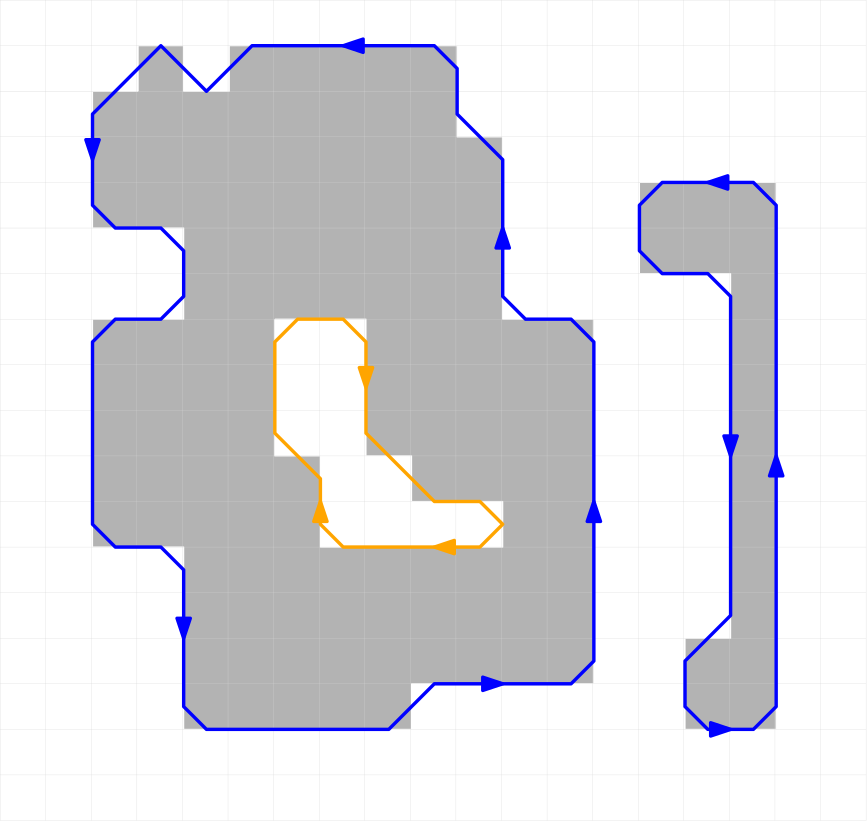}
     \caption{The input binary image $A$ with foreground in grey. The boundary curves $\partial A$ are oriented anticlockwise with the interior curve in orange and the exterior curves in blue. These curves are constructed using the rules illustrated in Figure~\ref{fig:boundaryedges}.}
\label{fig:binary}
\end{SCfigure}

For each direction $v$ the regular 0-dimensional persistent homology of the boundary curves can be computed very efficiently using the union-find data structure.%~\cite{}. 
Our implementation also identifies a $0$-critical vertex that represents the birth of a component in the filtration of $\partial A$; see Figure~\ref{fig:PH0}. 

\begin{SCfigure}[20]
	 \includegraphics[width=0.35\textwidth]{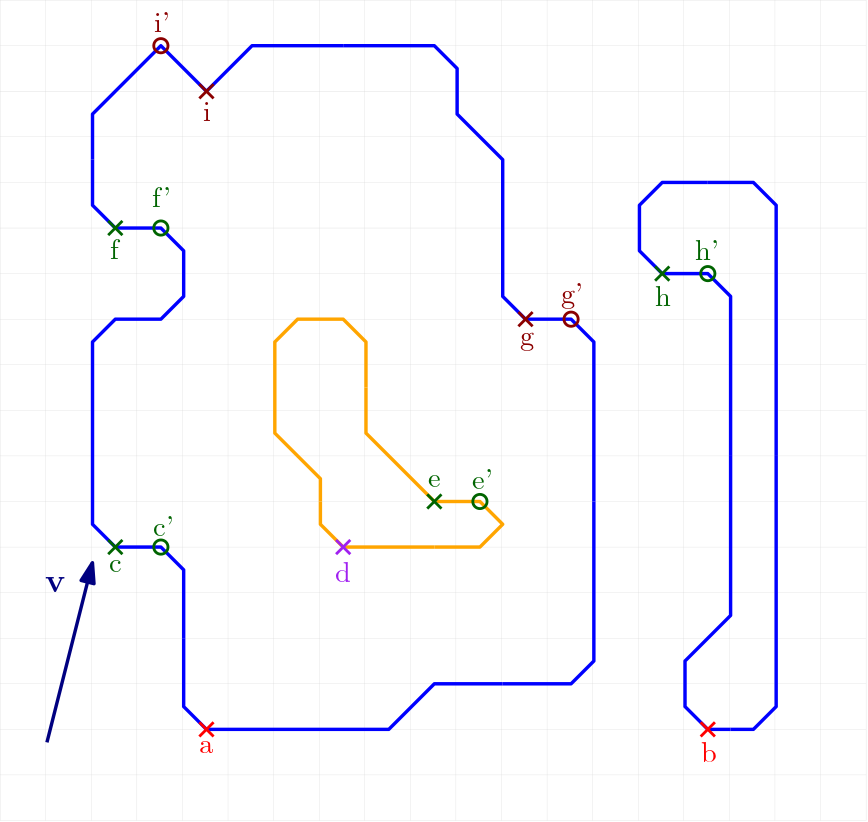}
         \caption{The critical points for $PH_0(\partial A,v)$ for the given direction $v$. The vertices marked with crosses are $0$-critical points and correspond to birth events in $PH_0(\partial A,v)$. Vertices marked with circles are $1$-critical and  cause a death in $PH_0(\partial A,v)$.  
The same letter label is given to the paired birth and death events of a persistent homology class from  $PH_0(\partial A,v)$.}
 \label{fig:PH0}
\end{SCfigure}

Using Lemma \ref{lem:min1} or \ref{lem:min2} it is also quick to determine which $0$-critical points are positive critical or negative critical for the foreground.  We thus can label all of the ordinary persistent homology classes as either $(+)$-critical or $(-)$-critical.  
This is illustrated for the example in Figure \ref{fig:localmin}.

\begin{SCfigure}[20]
\centering
         \includegraphics[width=0.35\textwidth]{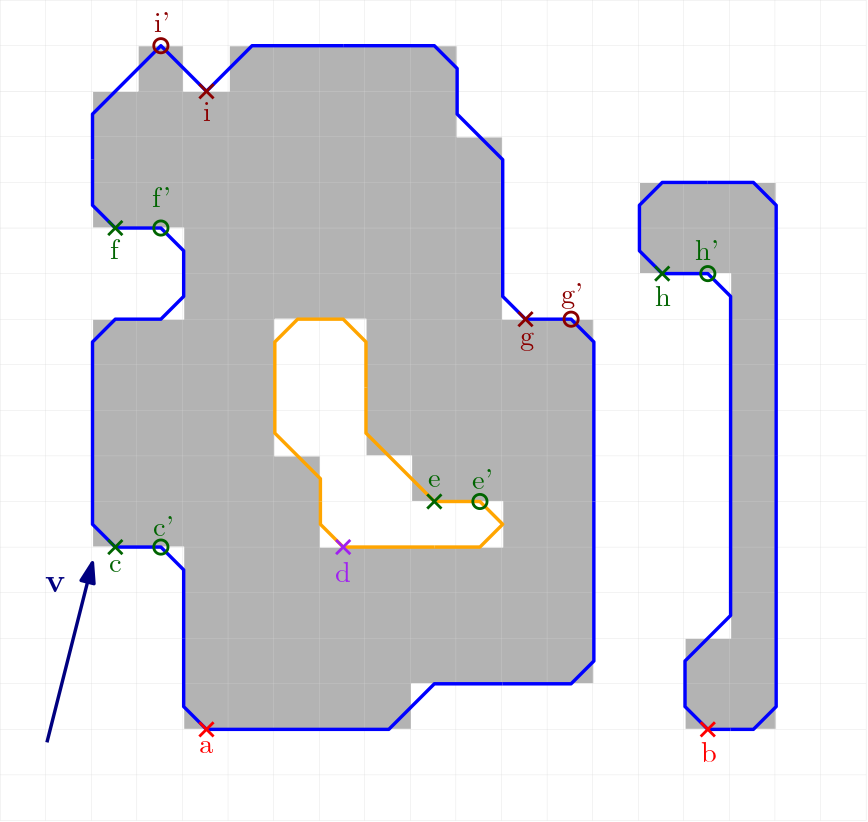}
         \caption{Identifying the boundary curve local minima that are also local minima of $h_v$ on the foreground. The $0$-critical points for $X$ that correspond to births of finite lifetime persistence classes in $PH_0(X,v)$ are $c,e,f,g,h$ and $i$. We have of these $c, e,f$ and $h$ are local minima for $A$ and thus $(+)$-critical points. The remaining ($g$ and $i$) are $(-)$-critical.}
         \label{fig:localmin}
\end{SCfigure}

Using Theorem \ref{thm:imagesubmodule} we can compute the ordinary and relative persistent homology for $h_v^A$ from the persistent homology of $h_v^X$ together with information about which $0$-critical isolated vertices and $0$-critical segments are $(+)$-critical. 
Applying the duality result from Corollary~\ref{cor:duality} we  deduce the ordinary and relative persistence modules for direction $-v$ from those for direction $v$. In our worked example:
\begin{align*}
\Ord_0(A,v)=&\mathcal{I}_{[(h_v(c),\ord), (h_v(c'),\ord))}\oplus \mathcal{I}_{[(h_v(e), \ord)(h_v(e'), \ord))}\\
& \oplus  \mathcal{I}_{[(h_v(f), \ord)(h_v(f'), \ord))} \oplus  \mathcal{I}_{[(h_v(h), \ord)(h_v(h'), \ord))} \\
\Rel_1(A,v)=&\mathcal{I}_{[(h_v(i'),\rel), (h_v(i),\rel))}\oplus \mathcal{I}_{[(h_v(g'), \rel)(h_v(g), \rel))}\\
\Ord_0(A,(-v))=&\mathcal{I}_{[(-h_v(i'),\ord), (-h_v(i),\ord))}\oplus \mathcal{I}_{[(-h_v(g'), \ord)(-h_v(g), \ord))} \\
\Rel_1(A,(-v))=&\mathcal{I}_{[(-h_v(c),\rel), (-h_v(c'),\rel))}\oplus \mathcal{I}_{[(-h_v(e), \rel)(-h_v(e'), \rel))}\\
& \oplus  \mathcal{I}_{[(-h_v(f), \rel)(-h_v(f'), \rel))} \oplus  \mathcal{I}_{[(-h_v(h), \rel)(-h_v(h'), \rel))} .
\end{align*}

To compute the essential classes we  use Proposition \ref{prop:ess}.   Each of the boundary curves is labelled as interior or exterior.   We compute the essential classes by finding the minimum and maximum values of $h_v^X$ on these boundary curves. This is illustrated for our running example in Figure ~\ref{fig:ess}.

\begin{SCfigure}[20]
	 \includegraphics[width=0.35\textwidth]{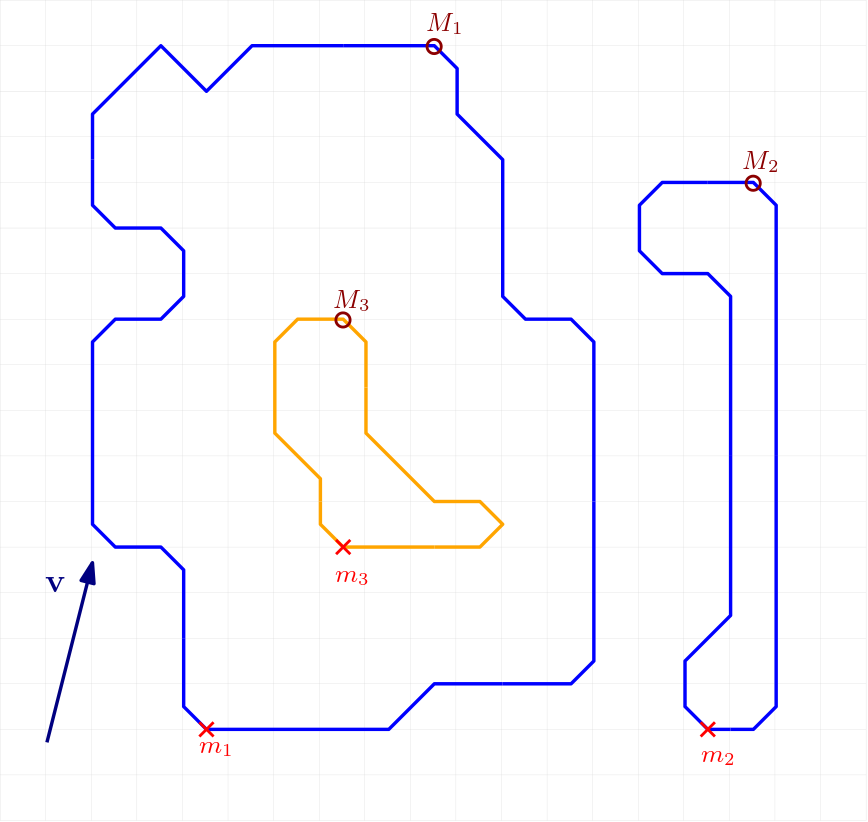}
         \caption{The minima and maxima points of $h_v^X$ for each boundary curve in $X$. The exterior curves have (minimum,maximum) pairs labelled $(m_1, M_1)$ and $(m_2, M_2)$ and the interior curve has the pair of $(m_3, M_3)$.}
         \label{fig:ess}
\end{SCfigure}

Using the notation of the figure, the essential classes for the foreground $A$ are therefore 
$$\Ess_0(A,v)=\mathcal{I}_{[(h_v(m_1),\ord),(h_v(M_1),\rel)}\oplus \mathcal{I}_{[(h_v(m_2),\ord),(h_v(M_2),\rel)}$$ and 
$$\Ess_1(A,v)=\mathcal{I}_{[(h_v(M_3), \ord), (h_v(m_3), \rel)}.$$ 
We then infer the essential persistence modules for direction $-v$ to be 
 $$\Ess_0(A,-v)=\mathcal{I}_{[(-h_v(M_1),\ord),(-h_v(m_1),\rel)}\oplus \mathcal{I}_{[(-h_v(M_2),\ord),(-h_v(m_2),\rel)}$$ and $$\Ess_1(A,-v)=\mathcal{I}_{[(-h_v(m_3), \ord), (-h_v(M_3), \rel)}.$$

\bigskip 

\begin{example}
We now briefly describe results from an XPHT analysis of the capital letter $A$ and the lower-case letter $g$ rendered using over 90 standard fonts. 
Each letter was created as a small binary image ($130 \times 130$) using an 84pt font size; these are shown in Figures~\ref{fig:fontA} and~\ref{fig:fontg}. 
The XPHT for each letter was computed using $K= 32$ directions.  
Fonts vary in their letter placement with respect to a baseline, so we centred the XPHT summary for each shape using the method outlined in \cite{turner2014persistent}. 
We did not scale the data as the letters have the same specified font size; this allows the different heights and widths to serve as characteristics of the font. 
When comparing the XPHT summaries of two shapes, we also did not need to consider angular alignment as the images are generated with a consistent orientation.  

We computed all pairwise distances between the XPHT summaries using both the $1$- and $2$-Wasserstein metrics.  
To demonstrate the types of shape features the XPHT captures, we use multi-dimensional scaling (MDS) to assign planar coordinates to each image.  
The plots in Figures~\ref{fig:fontA} and~\ref{fig:fontg} show that the XPHT distances capture the difference between serif and sans-serif versions of the letter $A$, and between single- and double-storey versions of the letter $g$. 
Of particular note is the font `Chalkduster' (label 32) which has a textured look with small holes and rough boundary; the XPHT distances don't make this an outlier for the letter $A$s. 
Chalkduster $g$ is an outlier for that set because the bowl doesn't create a closed 1-cycle. 
It's also worth noting that the XPHT distances vary nicely from the double-storey letter `g's which have $\beta_1 = 2$, to the single-storey `$g$'s with $\beta_1 = 1$, and that fonts such as those labelled 62 and 24, which look double-storey but have $\beta_1 = 1$ are placed in the middle of the MDS plot. 

These letters are included in the R-package release and more details about the analysis are provided in the vignettes. 
\end{example}

\begin{figure}[h]
\hspace{-2cm}         \includegraphics[height=6.5cm]{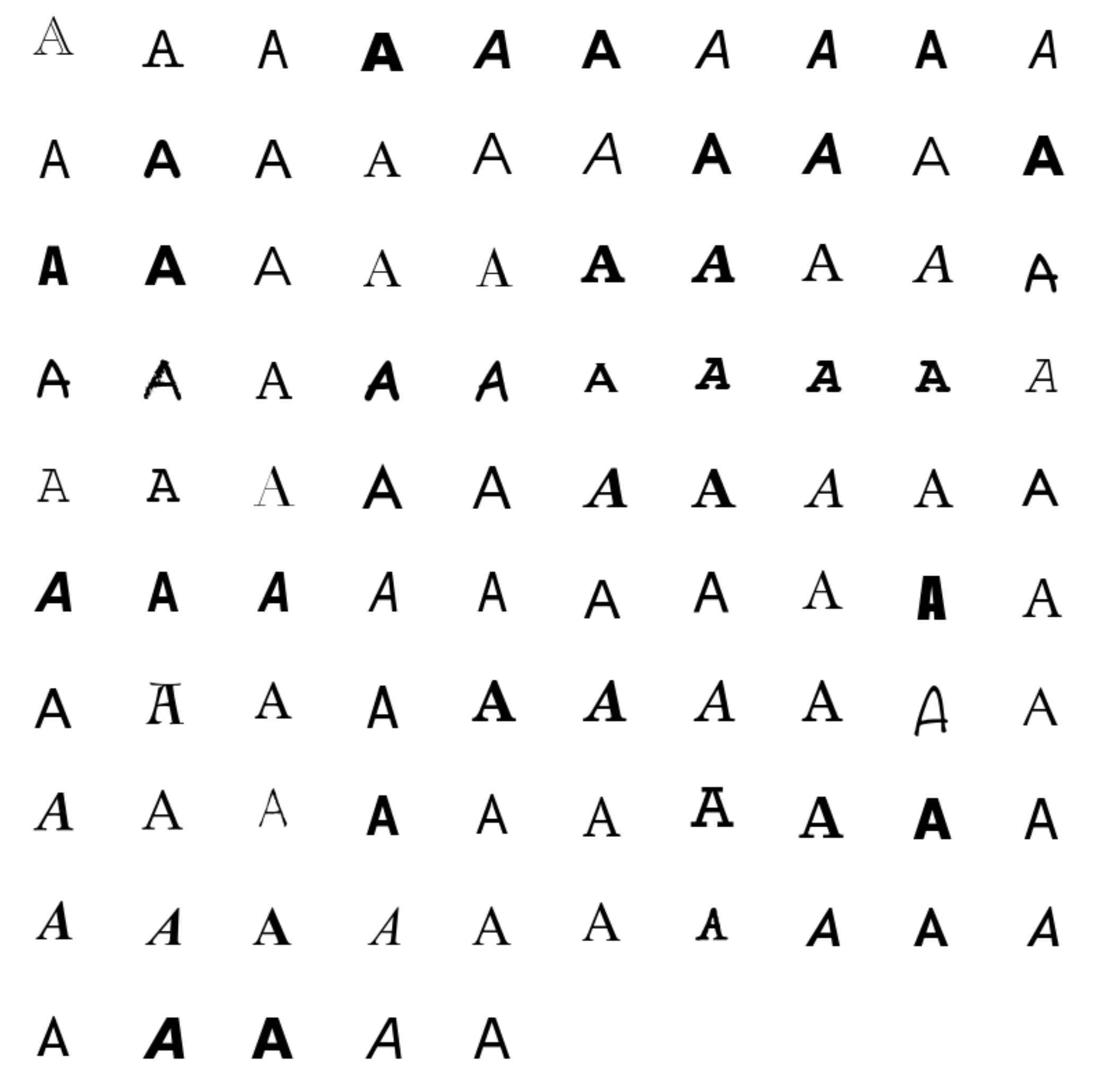}       
          \includegraphics[height=6.5cm]{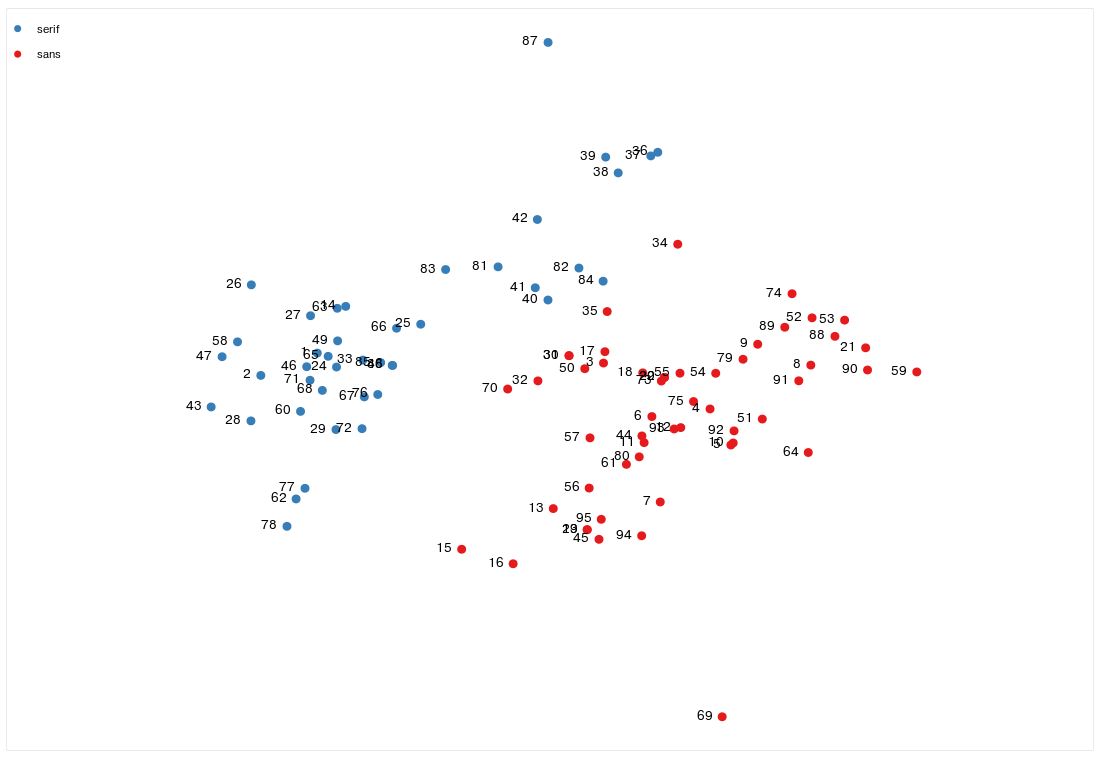}
         \caption{Upper case `A' rendered in a variety of fonts.  The letter shapes are numbered 1--95 reading left to right, top to bottom in the 10 by 10 grid.   The $2$-Wasserstein distances between each pair of letters are visualised using MDS in two dimensions. This shows a separation between serif `A' and sans serif `\textsf{A}'  fonts.  The upper outlier labelled 87 is `Trattatello' and has the smallest height and counter in this set. The lower outlier labelled 69 is `Noteworthy-Light', a large round simple script font. On the far right is letter 59 (`Impact'), notable for having a narrow body width but heavy weight. }
\label{fig:fontA}  
\end{figure}

\begin{figure}[h]
\hspace{-2cm}           \includegraphics[height=6.5cm]{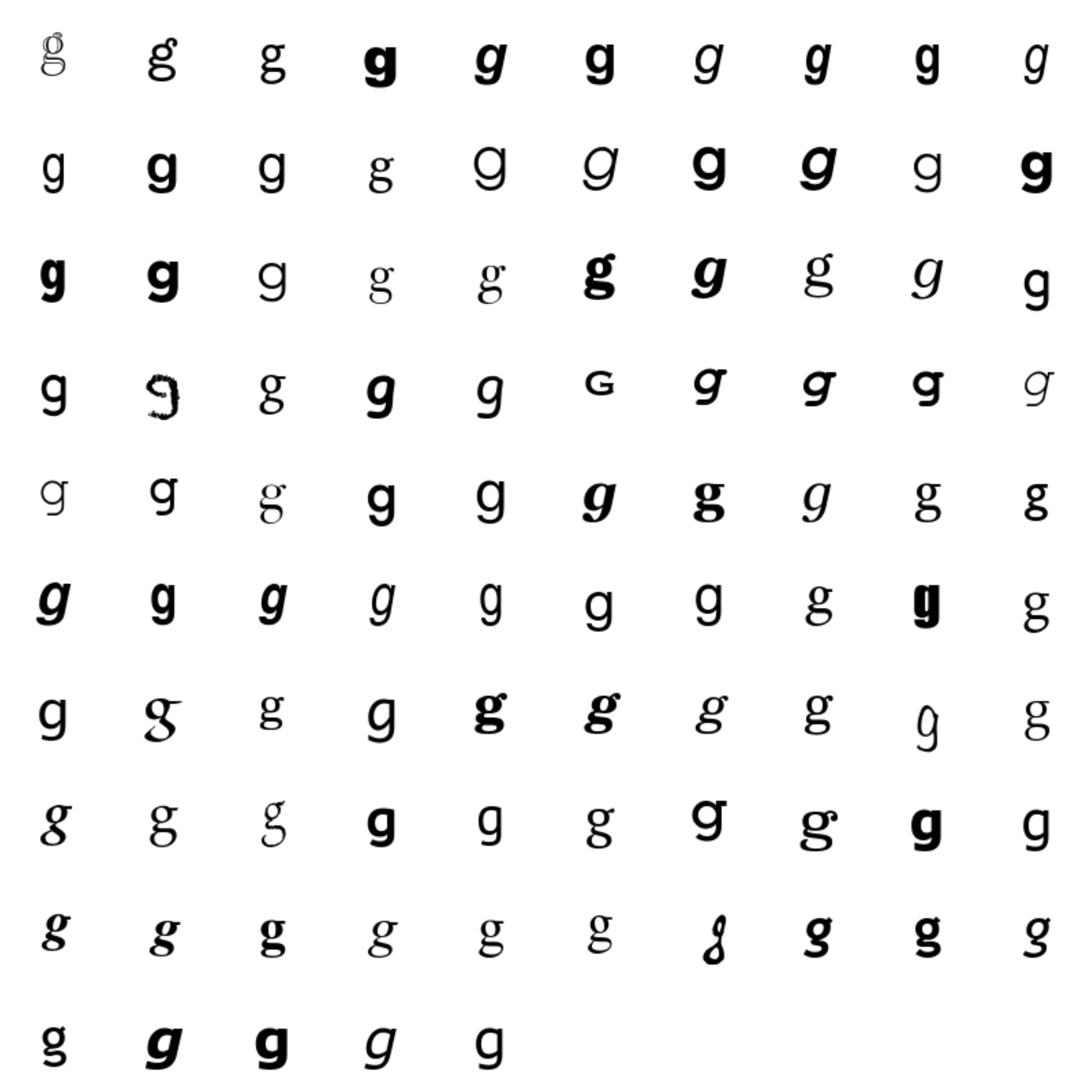}       
          \includegraphics[height=6.5cm]{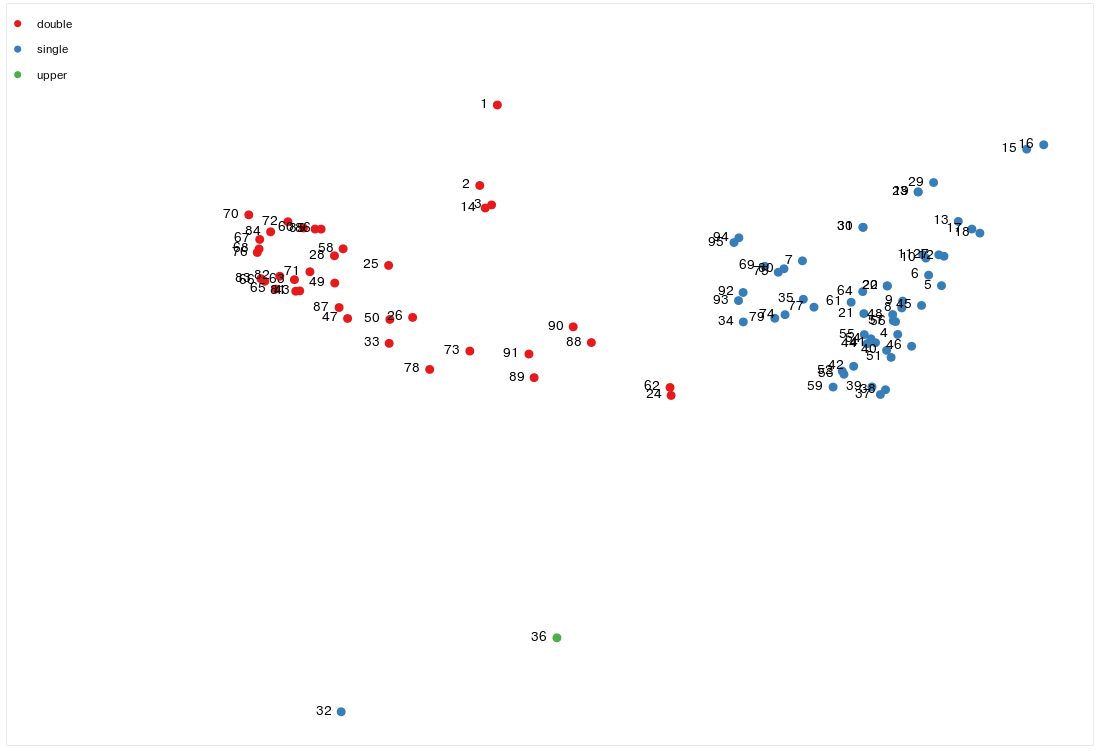}
         \caption{Lower case `g' rendered in the same fonts and same order as used for `A'.  The $2$-Wasserstein distances between each pair of letters are again visualised using MDS in $2$ dimensions.  In this case, there is good separation between single storey `$g$' and double storey `g' font shapes. The lower outlier labelled 32 is `Chalkduster' and has no essential 1-cycles; its nearest neighbour is 36 `Copperplate' which renders in upper-case form.  The first sample font is `Academy Engraved'; it is placed up above the others because it has additional essential 1-cycles due to its outlined stroke style.  On the far right are letters 15 and 16 (forms of `Avantgarde'); these have the roundest bowls and largest counters, i.e., large circular upper holes.  } 
     \label{fig:fontg}
\end{figure}

\section{Future directions}

This paper presents a new approach to computing persistent homology for manifolds with boundary by exploiting relationships between the extended persistent homology of a manifold with boundary to that of just the boundary. 
Although the focus here has been on height functions of embedded shapes in Euclidean space it is reasonable to expect that similar results could hold for other kinds of functions, such as radial functions. 
Future directions of research also include considering generalisations to stratified spaces, adapting ideas from stratified Morse theory as developed by Goresky and MacPherson~\cite{goresky1988stratified}.

Other areas to explore are theoretical properties of the XPHT.  In particular, intuitively we would expect better stability results than for the PHT as we can introduce new essential classes with small support without dramatically changing the extended persistent homology transform.

\bibliographystyle{plain}
\bibliography{Bibliography}
  
 \end{document}